\numberwithin{equation}{section}
\newtheorem{theorem}[equation]{Theorem}
\newtheorem{proposition}[equation]{Proposition}
\newtheorem{lemma}[equation]{Lemma}
\newtheorem{example}[equation]{Example}
\newtheorem{rmk}[equation]{Remark}
\newtheorem{corollary}[equation]{Corollary}
\newtheorem{problem}[equation]{Problem}
\newenvironment{eg}{\begin{example}\rm}{\end{example}}
\newenvironment{remark}{\begin{rmk}\rm}{\end{rmk}}
\newcommand{\arxiv}[1]{\href{http://arxiv.org/abs/#1}{{\tt arXiv:#1}}}
\newcommand{\N}{\mathbf{N}}
\newcommand{\Q}{\mathbf{Q}}
\newcommand{\Z}{\mathbf{Z}}
\renewcommand{\phi}{\varphi}
\renewcommand{\emptyset}{\varnothing}
\newcommand{\eps}{\varepsilon}
\renewcommand{\tilde}[1]{\widetilde{#1}}
\newcommand{\ol}[1]{\overline{#1}}
\newcommand{\ul}[1]{\underline{#1}}
\newcommand{\DS}{\displaystyle}
\newcommand{\Hs}{\mathrm{H}}
\def\Ddots{\mathinner{\mkern1mu\raise\p@
\vbox{\kern7\p@\hbox{.}}\mkern2mu
\raise4\p@\hbox{.}\mkern2mu\raise7\p@\hbox{.}\mkern1mu}}
\DeclareMathOperator{\coker}{coker} 
\newcommand{\GL}{\mathbf{GL}}
\newcommand{\SL}{\mathbf{SL}}
\newcommand{\Sp}{\mathbf{Sp}}
\newcommand{\SO}{\mathbf{SO}}
\newcommand{\Sym}{\mathrm{Sym}}
\newcommand{\Sc}{\mathbf{S}}
\DeclareMathOperator{\Tor}{Tor}
\renewcommand{\SS}{\mathfrak{S}}
\renewcommand{\P}{\mathbf{P}}
\newcommand{\B}{\mathbf{B}}
\newcommand{\HS}{\mathrm{HS}}
\newcommand{\VS}{\mathrm{VS}}
\newcommand{\Ad}{\mathrm{Ad}}
\newcommand{\SB}{S\mathbf{B}}
\newcommand{\SQ}{S\mathbf{Q}}
\newcommand{\ch}{\operatorname{ch}}
\DeclareMathOperator{\Spec}{Spec}
\newcommand{\BB}{\mathcal{B}}
\newcommand{\EE}{\mathcal{E}}
\newcommand{\OO}{\mathcal{O}}
\newcommand{\QQ}{\mathcal{Q}}
\newcommand{\RR}{\mathcal{R}}
\newcommand{\TT}{\mathcal{T}}
\DeclareMathOperator{\grade}{grade}
\title{Pieri resolutions for classical groups} 
\author{Steven V Sam \and Jerzy Weyman}
\date{May 19, 2012}
\begin{document}

\maketitle

\begin{center}Dedicated to Corrado De Concini on the occasion of his
  60th birthday.\end{center}

\begin{abstract} 
  We generalize the constructions of Eisenbud, Fl\o ystad, and Weyman
  for equivariant minimal free resolutions over the general linear
  group, and we construct equivariant resolutions over the orthogonal
  and symplectic groups. We also conjecture and provide some partial
  results for the existence of an equivariant analogue of
  Boij--S\"oderberg decompositions for Betti tables, which were proven
  to exist in the non-equivariant setting by Eisenbud and
  Schreyer. Many examples are given.
\end{abstract}


\tableofcontents

\section*{Introduction.}

In this paper we define new families of equivariant free
resolutions. These extend the equivariant resolutions defined in
\cite{efw} over the general linear group, which gave the first proof
in characteristic 0 of the existence of the ``pure'' resolutions
conjectured by Boij and S\"oderberg \cite{bsconj}. The proof of
acyclicity of the pure resolutions in \cite{efw} was indirect and used
the Borel--Weil--Bott theorem. Here we give a more direct proof based
on an explicit description of the differentials due to Olver
\cite{olver}\footnote[0]{In a previous version of this article, we overlooked that Olver did this in \cite[\S 8]{olver} before the appearance of \cite{efw}!}. We also extend the constructions of \cite{efw} to more
general resolutions, and give constructions for other groups. We also
provide some evidence for an equivariant form of the Boij--S\"oderberg
conjectures that would have striking consequences for Schur functions.

Let us describe the setup more precisely and give an overview of the
paper. 

We work over a field $K$ of characteristic zero, and let $V$ be an
$n$-dimensional vector space over $K$. We set $A = \Sym(V) \cong
K[x_1, \dots, x_n]$ to be a polynomial ring in $n$ variables, and we
consider the general linear group $\GL(V)$ as an algebraic group over
$K$. In this paper, all modules are assumed to be graded. We consider
finitely generated equivariant $A$-modules $M$. This means that one
has an (algebraic) action of $\GL(V)$ on $M$ denoted $g.m$ for $g \in
\GL(V)$ and $m \in M$, and an action of $A$ on $M$ denoted $p\cdot m$
for $p \in A$ such that the identity $g.(p\cdot m) = (g.p) \cdot
(g.m)$ holds, where $g.p$ denotes the canonical action of $\GL(V)$ on
$\Sym(V)$. Note that since $\GL(V)$ (and other classical groups used
below) are linearly reductive, the category of
$\GL(V)$-representations is semisimple, so every graded equivariant
$A$-module $M$ has a minimal graded equivariant resolution whose terms
are direct sums of free modules of type $A \otimes \Sc_\lambda V$ where
$\Sc_\lambda V$ denotes the irreducible representation of $\GL(V)$ of
highest weight $\lambda$. The above statements remain true when we
replace $\GL(V)$ by $\SO(V)$ or $\Sp(V)$ whenever $V$ has a
nondegenerate orthogonal or symplectic form.

Section~\ref{equivariantressection} extends the results of
\cite{efw}. In that paper, pure free resolutions are constructed in
characteristic 0 using representation theory. One of the main
constructions was the minimal free resolution of the cokernel of a
nonzero map of the form
\begin{align} \label{pierimap} \phi(\alpha, \beta) \colon A \otimes
  \Sc_\beta V \to A \otimes \Sc_\alpha V
\end{align}
where $\alpha$ and $\beta$ are partitions satisfying $\alpha_1 <
\beta_1$, and $\alpha_i = \beta_i$ for $i>1$. In
Section~\ref{purefreesection}, we give a simpler proof of the
correctness of the terms of the minimal resolution of $\coker
\phi(\alpha, \beta)$. Then in Section~\ref{pieriresolutionsection}, we
extend the construction of minimal free resolutions in
Theorem~\ref{r=1} by removing the restrictions on $\alpha$ and
$\beta$. In particular, we only assume that $\Sc_\beta V$ is a
subrepresentation of $A \otimes \Sc_\alpha V$, so that a nonzero
equivariant map of the form \eqref{pierimap} exists. We call such maps
{\bf Pieri maps}. The decomposition of the modules in the resolution
in terms of $\GL(V)$ representations can be described purely
combinatorially in terms of partitions. We also present a simple
combinatorial algorithm for writing down a free resolution of the
cokernel of a map of the form
\begin{align} \label{pierimap2}
\phi(\alpha; \beta^1, \dots, \beta^r) \colon \bigoplus_{i=1}^r A
\otimes \Sc_{\beta^i} V \to A \otimes \Sc_\alpha V
\end{align}
in Theorem~\ref{r>1} (under the natural assumption that
$\Sc_{\beta^i} V$ is a subrepresentation of $A \otimes \Sc_\alpha V$
for $i=1, \ldots, r$). In general, the resolution we give may not be
minimal, see Example~\ref{nonminimalexample}. However, we give an
explicit closed form description for the minimal free resolution in
Corollary~\ref{singlecolumns} for the special case when $\beta^i$ and
$\alpha$ differ in only one entry for each $i = 1, \dots, r$. We call
the minimal resolutions of maps of the form \eqref{pierimap2} {\bf
  Pieri resolutions}.

The map $\phi(\alpha, \beta)$ (and hence $\phi(\alpha; \beta^1, \dots,
\beta^r)$) can be calculated (up to a scalar multiple) in Macaulay 2
using the {\tt PieriMaps} package (see \cite{pierimaps}).

\begin{eg} 
  Let $n=3$, $\alpha = (3,1,0)$, $r=2$, and $\beta^1 = (5,1,0)$ and
  $\beta^2 = (3,2,0)$. Representing the module $A \otimes \Sc_\lambda
  V$ by the Young diagram of $\lambda$ (our convention for partitions:
  the diagram of $\lambda$ has $\lambda_i$ boxes in the $i$th column),
  we get the following resolution:
  \[
  0 \to \tiny \tableau[scY]{,,|,,||||} \to \tableau[scY]{,|,||||}
  \oplus \tableau[scY]{,,|,,||} \to \tableau[scY]{,|,||} \oplus
  \tableau[scY]{,|||||} \to \tableau[scY]{,|||} \to M \to 0
  \]
  where $M = \Sc_{(3,1,0)} V \oplus \Sc_{(4,1,0)} V \oplus
  \Sc_{(3,1,1)} V \oplus \Sc_{(4,1,1)} V$.
\end{eg}

We remark that the techniques of \cite{efw} are limited to
characteristic 0 because of the failure of the Borel--Weil--Bott
theorem in positive characteristic, and our techniques are limited to
characteristic 0 because semisimplicity of the general linear group
fails otherwise, which means that nonzero equivariant maps of the form
\eqref{pierimap} often do not exist.

Also included in Section~\ref{pieriresolutionsection} is how
equivariant resolutions can be constructed when $A$ is replaced by the
exterior algebra $B = \bigwedge V$. The resolutions one obtains are
infinite in length, but still simple to describe combinatorially.

In Section~\ref{otherclassicalgroups} we generalize the results of
\cite{efw} to other classical groups. When $G$ is an orthogonal or
symplectic group, we have a standard representation $F$ (a vector
space with a symmetric or skew-symmetric nondegenerate bilinear
form). The highest weights of irreducible representations of the group
$G$ occurring in the tensor powers on $F$ still correspond to
partitions. The constructions in the case of the general linear group
are functorial and hence extend to vector bundles. We construct the
analogues of a family of graded equivariant $\Sym(F)$-modules $M$
which is analogous to the cokernel of Pieri maps in the $\GL(V)$ case
by considering Pieri resolutions of homogeneous bundles over certain
homogeneous spaces for $G$. Then we use the geometric technique (see
Theorem~\ref{geometrictechnique}) and the Borel--Weil--Bott theorem
for the group $G$ to describe the minimal free resolution of the
module $M$.  They are not pure but can still be considered to be the
analogues of the complexes from \cite{efw}. The Lie types
$\mathrm{B}_n$, $\mathrm{C}_n$, and $\mathrm{D}_n$ are treated in
separate subsections. The arguments here are more delicate, since the
resolutions are constructed as iterated mapping cones, and for
example, in the calculations for type B, one must analyze a connecting
homomorphism in a long exact sequence to prove that some repeating
representations cancel.

Section~\ref{equivariantbssection} is concerned with a possible
equivariant analogue of the Boij--S\"oderberg conjectures which were
proved in \cite{es}. If $M$ is an equivariant module, let ${\bf
  F}_\bullet$ be its equivariant minimal free resolution. We define
its equivariant Betti table $\B(M)$ as follows: if ${\bf F}_i =
\bigoplus_j A(-j) \otimes V_{i,j}$, then $\B(M)_{i,j}$ is the
character of $V_{i,j}$. 

The strong version of the conjecture says that given any finite length
equivariant module $M$ with equivariant resolution ${\bf F}_\bullet$,
there exist representations $W, W_1, \dots, W_r$ such that $W \otimes
{\bf F}_\bullet$ has a filtration with associated graded
\begin{align} \label{complexconjecture} \operatorname{gr}(W \otimes
  {\bf F}_\bullet) \cong \bigoplus_{i=1}^r W_i \otimes {\bf
    F}(\alpha^i, \beta^i)_\bullet,
\end{align}
where ${\bf F}(\alpha^i, \beta^i)_\bullet$ is the minimal free
resolution of the map $\phi(\alpha^i, \beta^i)$. 

The weak version of the conjecture replaces the isomorphism of
complexes in \eqref{complexconjecture} with an equality of equivariant
Betti tables. If we remove the adjective ``equivariant,'' then the
weak version of the conjecture is a theorem of Eisenbud and Schreyer
\cite[Theorem 0.2]{es}. Furthermore, their result holds over any
field, and the finite length condition can be replaced by an arbitrary
codimension. The weak form of the conjecture would already be
interesting from the point of view of cohomology tables of homogeneous
bundles on projective space. In \cite{es}, a bilinear pairing is
defined between minimal resolutions over $\Sym(V)$ and vector bundles
on the projective space $\P(V)$ which reveals a duality between the
two. This bilinear pairing can also be defined in an equivariant way,
and one hopes that a similar kind of duality holds in an equivariant
sense. 

We present some examples of decompositions predicted by the weak
version of the conjecture in Section~\ref{equivariantbssection}. We
also provide some partial results in this direction (see
Proposition~\ref{simplicialcase}) and discuss some of the difficulties
in trying to extend the proof of Eisenbud and Schreyer to the
equivariant setting and in trying to find counterexamples to the
existence of such decompositions.

\subsection*{New in this version.}

In previous versions of this article (including the published version), we mistakingly passed over some of Olver's results in \S\ref{olvermapsection}. We added a footnote to the introduction, added some remarks to the beginning of \S\ref{purefreesection}, and changed some text in \S\ref{olvermapsection} to correct this mistake.

\subsection*{Acknowledgements.}
The authors would like to thank David Eisenbud and an anonymous
referee for reading previous drafts of this article and for making
numerous helpful suggestions. We also thank David Handelman for
pointing out the reference \cite{handelman}. Steven Sam was supported
by an Akamai Presidential Fellowship while this work was done. Jerzy
Weyman was partially supported by NSF grant 0901185.

\section{Background.}

In Section~\ref{reptheorysection}, we define our notation for
partitions and representations of $\GL(V)$ (which is slightly
nonstandard). In Section~\ref{olvermapsection} we give Olver's
explicit description of the inclusion arising from a Pieri-type tensor
product decomposition.

\subsection{Partitions and representation
  theory.} \label{reptheorysection}

Let $\alpha$ denote a partition, i.e., a sequence $\alpha = (\alpha_1,
\ldots, \alpha_n)$ with $\alpha_i \in \Z$ and $\alpha_1 \ge \alpha_2
\ge \cdots \ge \alpha_n \ge 0$. We let $\ell(\alpha)$ denote the {\bf
  length} of $\alpha$, which is defined to be the largest $m$ such
that $\alpha_m \ne 0$. We represent $\alpha$ by its Young diagram
$D(\alpha)$ with $\alpha_i$ boxes in the $i$th column.\footnote{We
  remark here that the usual (English) convention of drawing
  partitions is to have $\alpha_i$ boxes in the $i$th row, but this
  transposed way gives a compact notation for writing down
  resolutions.} The {\bf dual partition} $\alpha^*$ is defined by
setting $\alpha_i^*$ to be the number of $j$ such that $\alpha_j \ge
i$, or equivalently, the number of boxes in the $i$th row of
$D(\alpha)$. The notation $\alpha \subseteq \beta$ means that
$\alpha_i \le \beta_i$ for all $i$, or equivalently, $D(\alpha)
\subseteq D(\beta)$, and in this case, $\beta / \alpha = D(\beta /
\alpha)$ refers to the skew diagram $D(\beta) \setminus D(\alpha)$.

For brevity, we will say $(\beta, \alpha) \in \VS$ to mean that $\beta
\supseteq \alpha$ and $\beta / \alpha$ is a {\bf vertical strip},
i.e., $\beta_i \le \alpha_{i-1}$ for all $i$, or equivalently, that
there is at most one box in each row of $\beta / \alpha$. Analogously,
$(\beta, \alpha) \in \HS$ will mean that $\beta / \alpha$ is a {\bf
  horizontal strip}, i.e., $\beta^* / \alpha^*$ is a vertical
strip. The notation $(\beta, \alpha) \notin \VS$ shall mean that
either $\alpha \not\subseteq \beta$, or that $\alpha \subseteq \beta$
but $\beta/\alpha$ is not a vertical strip, and similarly for $(\beta,
\alpha) \notin \HS$.

The {\bf union} of two partitions $\beta \cup \beta'$ is defined to
have $i$th part $\max(\beta_i, \beta'_i)$, so that $D(\beta \cup
\beta') = D(\beta) \cup D(\beta')$. We will also use the notation
$\alpha < \beta$ ({\bf lexicographic ordering}) to mean that the first
nonzero entry of $(\beta_1 - \alpha_1, \beta_2 - \alpha_2, \dots )$ is
positive. Note that $<$ is a total ordering which extends $\subseteq$.

Fix a vector space $V$ with an ordered basis $x_1, \dots, x_n$. This
ordered basis determines a maximal torus and Borel subgroup $T \subset
B \subset \GL(V)$. We identify partitions $\alpha$ with dominant
weights of $\GL(V)$, and let $\Sc_\alpha V$ denote the irreducible
representation of $\GL(V)$ with highest weight $\alpha$, thought of as
a factor module of $\Sym^\alpha(V) = \Sym^{\alpha_1}(V) \otimes \cdots
\otimes \Sym^{\alpha_n}(V)$. We identify the elements of $\Sc_\alpha
V$ with linear combinations of fillings of $D(\alpha)$ using elements
of $\{1, \dots, n\}$, where $i$ is identified with the basis element
$x_i$, which are weakly increasing top to bottom along columns modulo
certain relations (see \cite{pierimaps} for more details). A basis is
given by semistandard Young tableaux, i.e., those fillings which are
strictly increasing from left to right along rows. Under this
identification, a highest weight vector is given by the tableau with
all boxes in column $i$ labelled with an $i$. We refer to this tableau
as the {\bf canonical tableau}. By the Weyl character formula, see
\cite[I, Appendix A.8]{macdonald}, the character of $\Sc_\alpha V$ is
given by the {\bf Schur polynomial}
\begin{align} \label{weylcharacterformula} s_\alpha = s_\alpha(x_1,
  \dots, x_n) = \frac{\det(x_j^{\alpha_i + n -
      i})_{i,j=1}^n}{\det(x_j^{n-i})_{i,j=1}^n}.
\end{align}
In Section~\ref{otherclassicalgroups}, we will need to know that the
definition of $\Sc_\alpha V$ is functorial with respect to $V$ and
extends to vector bundles.

Let $R(\GL(V))$ be the Grothendieck ring of the tensor category of
finite-dimensional rational representations of $\GL(V)$. By
semisimplicity, every representation can be written uniquely as a
direct sum of irreducible representations $\Sc_\alpha V \otimes
(\bigwedge V)^{\otimes r}$ where $\alpha = (\alpha_1 \ge \alpha_2 \ge
\cdots \ge \alpha_n = 0)$ and $r \in \Z$. Let $\Z[x_1, \dots,
x_n]^{\SS_n}$ denote the ring of symmetric functions, and define the
{\bf character} $\ch \colon R(\GL(V)) \to \Z[x_1, \dots,
x_n]^{\SS_n}[(x_1\cdots x_n)^{-1}]$, which is a ring isomorphism given
by
\begin{align} \label{character} \ch \Big( \bigoplus_\lambda
  \big(\Sc_\lambda V \otimes ( \bigwedge V )^{\otimes {r_\lambda}}
  \big)^{\oplus c_{\lambda}} \Big) = \sum_\lambda c_\lambda s_\lambda
  \cdot (x_1 \cdots x_n)^{r_\lambda}.
\end{align}

\subsection{Olver's description of Pieri
  inclusions.} \label{olvermapsection}

The following formula is crucial for the existence of our equivariant
resolutions. 

\begin{theorem}[Pieri's formula] Let $\alpha$ be a partition, and
  let $b$ be a positive integer. We have isomorphisms of
  $\GL(V)$-modules
  \begin{align*}
    \Sc_b V \otimes \Sc_\alpha V &\cong \bigoplus_{\substack{(\beta,
        \alpha) \in \VS \\ |\beta/\alpha| = b}} \Sc_\beta V,\\
    \bigwedge^b V \otimes \Sc_\alpha V &\cong
    \bigoplus_{\substack{(\beta, \alpha) \in \HS \\ |\beta/\alpha| =
        b}} \Sc_\beta V.
  \end{align*}
\end{theorem}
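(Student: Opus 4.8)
The plan is to convert the asserted isomorphisms of $\GL(V)$-modules into identities of characters, and then to recognize those identities as the classical Pieri rules for Schur polynomials. Since $\GL(V)$ is linearly reductive in characteristic zero, its category of rational representations is semisimple and an irreducible is determined by its highest weight; equivalently, by \eqref{character} the character map $\ch$ is a ring isomorphism, so two genuine representations are isomorphic if and only if they have the same character. As both sides of each asserted isomorphism are genuine representations, it suffices to prove
\[
  \ch(\Sc_b V)\cdot\ch(\Sc_\alpha V)=\sum_{\substack{(\beta,\alpha)\in\VS\\ |\beta/\alpha|=b}}\ch(\Sc_\beta V),\qquad
  \ch(\bigwedge^b V)\cdot\ch(\Sc_\alpha V)=\sum_{\substack{(\beta,\alpha)\in\HS\\ |\beta/\alpha|=b}}\ch(\Sc_\beta V).
\]

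By \eqref{weylcharacterformula} one has $\ch(\Sc_\lambda V)=s_\lambda$; in particular $\ch(\Sc_b V)=\ch(\Sym^b V)=s_{(b)}$ is the complete homogeneous symmetric polynomial $h_b$ and $\ch(\bigwedge^b V)=s_{(1^b)}$ is the elementary symmetric polynomial $e_b$ (both are immediate from \eqref{weylcharacterformula}; see \cite{macdonald}). Moreover, unwinding the definitions of Section~\ref{reptheorysection}: the condition $(\beta,\alpha)\in\VS$ with $|\beta/\alpha|=b$ is exactly the interlacing condition $\beta_1\ge\alpha_1\ge\beta_2\ge\alpha_2\ge\cdots$ together with $|\beta|-|\alpha|=b$ (equivalently, $\beta/\alpha$ is a horizontal strip in the usual English convention), while $(\beta,\alpha)\in\HS$ with $|\beta/\alpha|=b$ says $0\le\beta_i-\alpha_i\le 1$ for all $i$ together with $|\beta|-|\alpha|=b$ (equivalently, $\beta/\alpha$ is a vertical strip in the usual English convention). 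Hence the two displayed identities are precisely the two halves of Pieri's rule, $h_b\,s_\alpha=\sum_\beta s_\beta$ and $e_b\,s_\alpha=\sum_\beta s_\beta$.

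To prove these I would use the combinatorial expansion $s_\lambda=\sum_T x^T$, the sum over semistandard Young tableaux $T$ of shape $D(\lambda)$; this follows from \eqref{weylcharacterformula} and is essentially the tableau model of $\Sc_\lambda V$ recalled in Section~\ref{reptheorysection}. Then $h_b\,s_\alpha=\sum_w\sum_T x^w x^T$, the first sum over weakly increasing words $w=(i_1\le\cdots\le i_b)$ in $\{1,\dots,n\}$ (equivalently, over semistandard tableaux of shape $D((b))$) and the second over semistandard tableaux of shape $D(\alpha)$. The crux is to produce a weight-preserving bijection carrying each pair $(T,w)$ to a semistandard tableau of some shape $D(\beta)$ with $(\beta,\alpha)\in\VS$ and $|\beta/\alpha|=b$: row-insert the letters of $w$ into $T$ one at a time by Schensted insertion, check that weak monotonicity of $w$ forces the $b$ newly created boxes to lie in distinct columns (so that $\beta/\alpha$ is a horizontal strip), and check that the procedure is reversible so that $(T,w)$, hence the term $s_\beta$, can be recovered. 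The elementary case is entirely parallel, using strictly increasing words (semistandard tableaux of shape $D((1^b))$) and column insertion; alternatively it follows from the symmetric case by the standard transpose duality $h_r\leftrightarrow e_r$, $s_\lambda\leftrightarrow s_{\lambda^*}$ of symmetric functions, which also interchanges horizontal and vertical strips.

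I expect the one genuinely non-formal step to be the insertion bijection above; it is the combinatorial heart of Pieri's rule (a special case of the Robinson--Schensted--Knuth correspondence, or of the Littlewood--Richardson rule). A reader content to quote the literature may instead invoke \cite[I, (5.16) and (5.17)]{macdonald} once the reductions of the first two paragraphs are in place.
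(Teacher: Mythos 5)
Your proposal is correct, and it is more self-contained than what the paper actually does: the paper's ``proof'' of Pieri's formula consists solely of citing \cite[(5.16), (5.17)]{macdonald} or \cite[Corollary 2.3.5]{weyman}, together with the warning that those sources use the transposed diagram convention. Your first two paragraphs supply exactly the reduction that the paper leaves implicit -- semisimplicity of $\GL(V)$ makes $\ch$ injective on genuine representations, so a module isomorphism is equivalent to the character identity, and the paper's conditions $(\beta,\alpha)\in\VS$ resp.\ $\HS$ unwind (because diagrams here have $\lambda_i$ boxes in the $i$th \emph{column}) to the interlacing condition $\beta_1\ge\alpha_1\ge\beta_2\ge\cdots$ resp.\ to $0\le\beta_i-\alpha_i\le 1$, i.e.\ to the classical horizontal- and vertical-strip Pieri rules for $h_b s_\alpha$ and $e_b s_\alpha$ -- and this matches the paper's own caveat about conventions. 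Your final step, proving the symmetric-function identities by Schensted row insertion (with the standard argument that a weakly increasing word bumps into distinct columns, and reversibility of the insertion), is a correct and standard bijective proof; it buys a characteristic-zero-representation-theory-plus-combinatorics argument not relying on the cited texts, whereas the paper simply outsources the whole statement to Macdonald/Weyman. Either route is acceptable; if you only want what the paper needs, your closing remark that one may quote \cite[I, (5.16), (5.17)]{macdonald} after the two reduction paragraphs reproduces the paper's argument exactly.
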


\begin{proof} See \cite[(5.16), (5.17)]{macdonald} or \cite[Corollary
  2.3.5]{weyman}. Note that in both sources, the convention for Young
  diagrams is transpose to ours, and that in \cite{weyman}, $L_\lambda
  E$ is an irreducible representation with highest weight $\lambda^*$.
\end{proof}

In particular, we get inclusions $\Sc_\beta V \to \Sc_b V \otimes
\Sc_\alpha V$, which are well-defined up to a (nonzero) scalar
multiple. We call such maps {\bf Pieri inclusions}.

In fact, one can describe this map explicitly with respect to the
basis of semistandard Young tableaux. The following description for
the case when $b=1$, i.e., $\beta / \alpha$ is a single box, and we
have a map $\Sc_\beta V \to V \otimes \Sc_\alpha V$, comes from
\cite[\S 6]{olver} where they are called polarization maps. The
general case is developed from this case in \cite[\S 8]{olver} by a careful choice of coefficients. We offer a similar, but alternative derivation from the $b=1$ case which does not involve as much attention to the coefficients.

First, we work with more general ``shapes.'' That is, diagrams
$D(\lambda)$ obtained by dropping the requirement that $\lambda_1 \ge
\lambda_2 \ge \cdots$. Given a tableau $T$ with underlying shape
$\lambda$ and indices $i<j$, set $\tau_{ij}(T)$ to be the sum of all
fillings of shapes obtained by removing a box along with its label
from the $j$th column (and the boxes below it are shifted up to fill
in the hole) and appending it to the end of the $i$th column. There
are of course $\lambda_j$ such ways to do so counting multiplicity. If
$i=0$, then we consider the box to be in the ``0th column'' (this will
correspond to the $V$ in $V \otimes \Sc_\alpha V$). Given an
increasing sequence $J = (j_1 < j_2 < \cdots < j_r)$, we define
$\tau_J = \tau_{j_{r-1}j_r} \circ \cdots \circ \tau_{j_1j_2}$, and
define $\# J = r$. The fillings obtained need not be
semistandard, but they are well-defined elements of $V \otimes
\Sc_\alpha V$ (see Section~\ref{reptheorysection}).

Now suppose that $\beta / \alpha$ is a single box in the $k$th
column. Given our basis $\{x_1, \dots, x_n\}$ of $V$, the basis
elements of $\Sc_\beta V$ are identified with semistandard tableaux of
shape $\beta$ with labels $\{1, \dots, n\}$, and the basis elements of
$V \otimes \Sc_\alpha V$ are identified with elements $x_i \otimes T$
where $1 \le i \le n$ and $T$ is a semistandard tableau of shape
$\alpha$ (the variable can be thought of as the ``0th column.'') Let
$B_k$ be the set of strictly increasing sequences $j_1 < j_2 < \cdots
< j_r$ (of all lengths $r$) such that $j_1 = 0$ and $j_r = k$. For $J
\in B_k$, define the coefficients
\begin{align} \label{cJ} 
  c_J = \prod_{i=2}^{\#J - 1} (\beta_{j_i} - \beta_k + k - j_i)
\end{align}
(the empty product is 1). Then the Pieri inclusion is 
\begin{align} \label{olvermap} \sum_{J \in B_k} \frac{(-1)^{\# J}
    \tau_J}{c_J}.
\end{align}
Essentially, the Pieri inclusion is obtained by summing ``all possible
ways to remove a box from a semistandard tableau of shape $\beta$ to
get a variable in $V$, times the remaining filling of $\alpha$.'' Of
course, in general, this filling will not be semistandard but we can
use the relations in $\Sc_\alpha V$ to write them in terms of a
semistandard basis. Details and some examples can be found in
\cite{pierimaps}. 

In order to get the general case, one first picks a filtration of
partitions $\beta = \alpha^0 \supset \alpha^1 \supset \cdots \supset
\alpha^b = \alpha$ where $b = |\beta / \alpha|$ and each $\alpha^j /
\alpha^{j+1}$ is a single box. Composing the Olver maps, one gets a
map 
\[
\Sc_\beta V \to V \otimes \Sc_{\alpha^1} V \to \cdots \to V^{\otimes
  b} \otimes \Sc_\alpha V.
\]
To get the desired inclusion, we compose this with $V^{\otimes b}
\otimes \Sc_\alpha V \to \Sc_b V \otimes \Sc_\alpha V$ where the map
on the first component is the canonical projection of a tensor power
onto a symmetric power, and the second component is the identity map.

The following lemma explains one way to extend the above definition of Olver maps to the case that $|\beta/\alpha|>1$.

\begin{lemma}[Olver] \label{olvercompositionlemma} With the above notation,
  the composition $\Sc_\beta V \to \Sc_b V \otimes \Sc_\alpha V$ is
  nonzero. In fact, up to nonzero scalar multiples, the following
  diagram
  \[
  \xymatrix{ \Sc_\beta V \ar[d]_-\psi & V \otimes V \otimes
    \Sc_{\alpha^2} V \ar[r]^-{p \otimes \psi} \ar[d]_-{p \otimes 1} &
    \cdots \ar[r]^-{p \otimes \psi} & \Sc_{b-2} V \otimes V \otimes
    \Sc_{\alpha^{b-1}} V \ar[r]^-{p \otimes \psi} \ar[d]_-{p \otimes
      1} & \Sc_{b-1} V \otimes V \otimes \Sc_\alpha V \ar[d]_-{p
      \otimes 1} \\ V \otimes \Sc_{\alpha^1} V \ar[ur]^-{1 \otimes
      \psi} & \Sc_2 V \otimes \Sc_{\alpha^2} V \ar[ur]^-{1 \otimes
      \psi} & \cdots & \Sc_{b-1} V \otimes \Sc_{\alpha^{b-1}} V
    \ar[ur]^-{1 \otimes \psi} & \Sc_b V \otimes \Sc_\alpha V }
  \]
  commutes, where $p \colon \Sc_i V \otimes V \to \Sc_{i+1} V$ is the
  usual projection map, and $\psi$ denotes the Olver map as described
  above. Furthermore, the map $\Sc_\beta V \to \Sc_b V \otimes
  \Sc_\alpha V$ is nonzero for all filtrations $\beta = \alpha^0
  \supset \alpha^1 \supset \cdots \supset \alpha^b = \alpha$.
\end{lemma}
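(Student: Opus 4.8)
The plan is to prove the two assertions of the lemma --- the commutativity of the diagram and the nonvanishing of the composite for \emph{every} filtration --- by a single induction on $b = |\beta/\alpha|$, using the commutative diagram itself as the inductive bookkeeping device. The case $b = 1$ is Olver's original result, and the case $b=2$ is the leftmost square; this is the base step and should be checked by hand, either by direct computation with the explicit formula \eqref{olvermap} applied to the canonical tableau, or by the representation-theoretic argument below.

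First I would establish commutativity of each individual square. The $j$th square reads $\Sc_{j-1} V \otimes V \otimes \Sc_{\alpha^{j-1}} V \to \Sc_j V \otimes V \otimes \Sc_{\alpha^j} V$ along the top and $\Sc_{j-1} V \otimes \Sc_{\alpha^{j-1}} V \to \Sc_j V \otimes \Sc_{\alpha^j} V$ along the bottom, with the vertical $\psi$'s being a single-box Olver map in the last tensor factor and the diagonals/verticals being the symmetrizing projections $p$ in the first two. Since $\psi$ acts only on the rightmost two factors and $p$ only on the leftmost two, the two composites around the square differ at most by a scalar; one checks they agree (as genuine maps, not just up to scalar) by tracking the canonical (highest-weight) tableau through both paths and comparing the leading coefficient, since a $\GL(V)$-equivariant map out of an irreducible is determined by the image of a highest weight vector. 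The key point is that the coefficients $c_J$ in \eqref{cJ} were designed precisely so that the single-box Olver maps compose correctly; the commutativity of the square is then essentially a compatibility of these coefficients with the shuffle of $p$ past $\psi$.

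For the nonvanishing, I would argue that the bottom row of the diagram, read left to right, is exactly the composite $\Sc_\beta V \to \Sc_b V \otimes \Sc_\alpha V$ built from the chosen filtration (after identifying $V^{\otimes b} \otimes \Sc_\alpha V \to \Sc_b V \otimes \Sc_\alpha V$ with the iterated $p$'s). By commutativity, this composite equals the top path: apply $\psi$ once to land in $V \otimes \Sc_{\alpha^1} V$, then run along the top row. Now by Pieri's formula $\Sc_\beta V$ occurs with multiplicity one in $\Sc_b V \otimes \Sc_\alpha V$, and it occurs in $V^{\otimes b} \otimes \Sc_\alpha V$ as well; I would show the composite $\Sc_\beta V \to V^{\otimes b}\otimes \Sc_\alpha V$ is nonzero (each single-box Olver map $\psi$ is an injection by Olver's theorem, being the Pieri inclusion for a single box, so the composite of injections is injective) and that its image is not killed by the projection $V^{\otimes b} \to \Sc_b V$. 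The last point is the crux: one must see that the $\Sc_\beta$-isotypic part of the image of $\Sc_\beta V$ in $V^{\otimes b}\otimes \Sc_\alpha V$ does not lie in the kernel of $p^{\otimes}\colon V^{\otimes b} \to \Sc_b V$. I would verify this again on the canonical tableau: chasing it through the successive $\tau$-operations, the resulting element of $V^{\otimes b}\otimes \Sc_\alpha V$ has a nonzero symmetric component because the ``box-removal'' operations producing the $V$-factors, evaluated on the canonical tableau, produce variables $x_i$ in a pattern whose symmetrization is a nonzero multiple of the canonical tableau of $\Sc_b V$ tensored with the canonical tableau of $\Sc_\alpha$.

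The main obstacle I anticipate is precisely this last nonvanishing-after-symmetrization step, together with checking that it is independent of the chosen filtration $\beta = \alpha^0 \supset \cdots \supset \alpha^b = \alpha$. Filtration-independence should follow formally once nonvanishing is known for one filtration: any two single-box filtrations are connected by transpositions of adjacent removal steps, and swapping two adjacent single-box removals changes the composite Olver map only by a nonzero scalar (this is the $b=2$ case, i.e.\ the leftmost square applied with $\Sc_{\alpha^2} V$ replaced appropriately), so nonvanishing propagates. Thus the real work is concentrated in (i) the base case $b\le 2$ and (ii) the explicit tableau computation showing the symmetric component survives; everything else is a formal induction up the commutative ladder.
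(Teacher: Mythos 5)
Your skeleton matches the paper's strategy (canonical-tableau leading-term computation, plus propagation across filtrations via adjacent transpositions), but the crux is exactly where your sketch would not go through as written. You assert that chasing the canonical tableau $T_\beta$ through the successive $\tau$-operations yields an element whose symmetrization is ``a nonzero multiple'' of $x_{c_1}\cdots x_{c_b}\otimes T_\alpha$ plus lower terms, as if no cancellation could occur in that leading coefficient. For an arbitrary filtration this is false on its face: the Olver map \eqref{olvermap} carries signs $(-1)^{\#J}$, and for a general removal order several sequences $J$ (e.g.\ $J=(0<i<j)$, which first slides a box from column $j$ to column $i$) contribute to the coefficient of the leading monomial with opposite signs. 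The paper handles this in two steps that your proposal does not supply: first it fixes a particular order (boxes removed in weakly decreasing column index), for which one checks that only $J=(0<c_i)$ can contribute to the leading term, giving the coefficient $\beta_{c_1}\cdots\beta_{c_b}\neq 0$ with no cancellation; second, for the swapped order in the $b=2$ case it computes the two competing contributions explicitly via the coefficients \eqref{cJ} and finds the total $\beta_i\beta_j\bigl(1-\tfrac{1}{\beta_i-\beta_j+j-i}\bigr)$, which is nonzero only because the vertical-strip hypothesis forces $\beta_i-\beta_j+j-i\ge 2$. That this is a genuinely numerical, non-formal fact is underscored by Corollary~\ref{exteriorolver}: in the exterior case the analogous factor is $1+\tfrac{1}{\beta_i-\beta_j+j-i}$, which is automatically nonzero, whereas in the symmetric case the hypothesis is really used. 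Your proposal defers all of this to ``check by hand,'' and the alternative ``representation-theoretic argument'' you offer (injectivity into $V^{\otimes b}\otimes\Sc_\alpha V$ plus survival under symmetrization) is precisely the statement in question, not a proof of it.

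Your propagation step is essentially right and is what the paper does (an adjacent swap changes the composite by the nonzero scalar from the $b=2$ computation, and transpositions generate $\SS_b$), though you should note that the two orders agree only after symmetrizing the two newly created $V$-factors, so one must observe that the later Olver maps and the final projection $V^{\otimes b}\to\Sc_b V$ factor through that partial symmetrization. But since this reduction leans entirely on the $b=2$ case, and the $b=2$ case is where the cancellation analysis lives, the essential mathematical content of the lemma is missing from the proposal rather than merely postponed: as sketched, the ``pattern whose symmetrization is nonzero'' claim would need to be replaced by the paper's choice of a good removal order together with the explicit coefficient computation \eqref{nonzeroolver}.
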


\begin{proof} Let $V$ have an ordered basis $x_1, \dots, x_n$ so that
  we may identify symmetric powers of $V$ with monomials in the
  $x_i$. Suppose that $\alpha$ is obtained from $\beta$ by removing
  boxes in columns $c_1 \ge c_2 \ge \cdots \ge c_b$.

  We first show that if the filtration is picked such that
  $\alpha^{i-1} / \alpha^i$ is a single box in column $c_i$, then the
  composition is nonzero. Let $T_\lambda$ be the canonical tableau in
  $\Sc_\lambda V$. The image of $T_\beta$ under the composition
  $\Sc_\beta V \to \Sc_b V \otimes \Sc_\alpha V$ is
  \[
  \beta_{c_1} \beta_{c_2} \cdots \beta_{c_b} x_{c_1} x_{c_2} \cdots
  x_{c_b} \otimes T_\alpha + o(T_\alpha)
  \]
  where $o(T_\alpha)$ is a sum of tensors whose $\Sc_\alpha V$
  component is a vector with weight lower than $T_\alpha$. This is
  clear by induction on $b$ from Olver's description: of all the
  increasing sequences $J$ in \eqref{olvermap} involved in the map
  $\Sc_{i-1} V \otimes \Sc_{\alpha^{i-1}} \to \Sc_{i-1} V \otimes V
  \otimes \Sc_{\alpha^i}$, only $J = (0 < c_i)$ has the property that
  $x_{c_1} \cdots x_{c_{i-1}} \otimes \tau_J(T_{\alpha^{i-1}})$ is
  written as a linear combination of basis vectors which can contain
  $x_{c_1}x_{c_2} \cdots x_{c_i} \otimes T_\alpha$ with a nonzero
  coefficient when mapped to $\Sc_i V \otimes \Sc_\alpha V$.

  Now we show that the map is nonzero independent of the chosen
  filtration. We first assume that $b=2$. Let $i=c_1$ and $j=c_2$. We
  may assume that $i<j$ or there is nothing to show. Then of course
  $\beta_i > \beta_j$ because $\beta / \alpha$ is a vertical strip.
  Let $\beta'$ be $\beta$ with a box in column $j$ removed. Let
  $\phi_1$ be the first map $\Sc_\beta V \to V \otimes \Sc_{\beta'}
  V$, and let $\phi_2$ be the second map $V \otimes S_{\beta'} V \to V
  \otimes V \otimes \Sc_\alpha V$, where the map on the first factor
  of $V$ under $\phi_2$ is the identity.

  There are two basis elements of $V \otimes \Sc_{\beta'} V$ with
  nonzero coefficient in $\phi_1(T_\beta)$ which can map to $x_ix_j
  \otimes T_\alpha$. Namely, the first is $C_1 x_j \otimes
  T_{\beta'}$, and the second is $C_2 x_i \otimes L$ for some
  coefficients $C_1, C_2$, where $L$ is the tableau with $\beta_k$
  $k$'s in the $k$th column if $k\ne i$ and $k \ne j$, contains
  $\beta_i-1$ $i$'s and 1 $j$ in the $i$th column, and $\beta_j-1$
  $j$'s in the $j$th column. Using Olver's description, the
  coefficients are
  \[
  C_1 = \beta_j, \quad C_2 = - \frac{\beta_i \beta_j}{\beta_i -
    \beta_j + j - i}.
  \]
  Now, we also have
  \[
  \phi_2(T_{\beta'}) = \beta_i x_i T_\alpha + o(T_\alpha), \quad
  \phi_2(L) = x_j T_\alpha + o(T_\alpha),
  \]
  so putting it all together,
  \begin{equation} \label{nonzeroolver}
    \begin{split}
      \phi_2(\phi_1(T_\beta)) &= C_1 x_j \phi_2(T_{\beta'}) + C_2 x_i
      \phi_2(L) + o(T_\alpha) \\
      &= \beta_i \beta_j x_j x_i T_\alpha - \frac{\beta_i
        \beta_j}{\beta_i - \beta_j + j - i} x_i x_j T_\alpha + o(T_\alpha) \\
      &= \beta_i \beta_j x_i x_j \left(1 - \frac{1}{\beta_i - \beta_j
          + j - i}\right) T_\alpha + o(T_\alpha).
    \end{split}
  \end{equation}
  Finally, we know that $\beta_i - \beta_j + j - i \ge 2$, so the
  coefficient of $T_\alpha$ in the last expression is nonzero. Hence
  for $r=2$, both ways of composing Olver maps give nonzero maps, and
  hence must be scalar multiples of each other.

  For the general case $r \ge 2$, note that any permutation of the
  order of box removals is valid because $\beta / \alpha$ is a
  vertical strip. Hence, any two permutations of compositions are
  scalar multiples of each other because the symmetric group is
  generated by transpositions.
\end{proof}

We take a minute to discuss the scalar multiples that appear in the
above proof. Let $f_1$ be the composition $\Sc_\beta V \to \Sc_b V
\otimes \Sc_\alpha V$ obtained by removing the boxes in increasing
order of column index $c_1 \le c_2 \le \cdots \le c_b$, and for a
permutation $\sigma \in \SS_b$, let $f_\sigma$ be the composition
obtained by removing the boxes in the order $c_{\sigma^{-1}(1)},
c_{\sigma^{-1}(2)}, \dots, c_{\sigma^{-1}(b)}$. If $\beta / \alpha$
has $b_i$ boxes in the $i$th column, then $f_\sigma = f_\tau$ if
$\sigma$ and $\tau$ represent the same left coset in $\SS_b /
(\SS_{b_1} \times \cdots \times \SS_{b_n})$, where $\SS_{b_1} \times
\cdots \times \SS_{b_n}$ is the subgroup of permutations which maps
$\{b_1 + \cdots + b_{i-1} + 1,\dots,b_1 + \cdots + b_i\}$ amongst
themselves for $i=1,\dots,n$. We have seen that if $s_i$ is the
transposition $(i,i+1)$ and $c_i \ne c_{i+1}$, then
\[
f_{s_i} = \left(1 - \frac{1}{\beta_{c_i} 
 - \beta_{c_{i+1}} + c_{i+1} - c_i} \right) f_1.
\]
The formula for $f_\sigma$ in terms of $f_1$ is complicated in
general, so we content ourselves with this special case. It is possible to make a careful choice of normalization of the Olver maps so that we get commutative squares, see \cite[\S 8]{olver} for details.

The above proof also shows that we can replace symmetric powers with
exterior powers.

\begin{corollary} \label{exteriorolver} Using the notation of this
  section, replacing the map $V^{\otimes b} \otimes \Sc_\alpha V \to
  \Sc_b V \otimes \Sc_\alpha V$ by $V^{\otimes b} \otimes \Sc_\alpha V
  \to \bigwedge^b V \otimes \Sc_\alpha V$ gives a nonzero composition
  $\Sc_\beta V \to \bigwedge^b V \otimes \Sc_\alpha V$ whenever $\beta
  / \alpha$ is a horizontal strip.
\end{corollary}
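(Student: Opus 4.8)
The plan is to run the same weight-tracking computation as in Lemma~\ref{olvercompositionlemma}, but now following the canonical tableau through the exterior analogue of the diagram. The key observation is that the entire argument of the lemma only ever used: (i) Olver's explicit formula \eqref{olvermap} to identify the leading term of $\psi(T_{\alpha^{i-1}})$, and (ii) the fact that after applying the projection $V^{\otimes b} \to \Sc_b V$ the monomial $x_{c_1}\cdots x_{c_b}$ survives with a nonzero coefficient. Only the second point is affected by replacing $\Sc_b V$ with $\bigwedge^b V$, and there the hypothesis changes accordingly: $\beta/\alpha$ is now a horizontal strip, so the columns $c_1 > c_2 > \cdots$ from which boxes are removed are \emph{distinct} (at most one box per column is removed, since in the dual picture this is a vertical strip). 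Hence $x_{c_1} \wedge x_{c_2} \wedge \cdots \wedge x_{c_b}$ is a nonzero basis vector of $\bigwedge^b V$ rather than being killed.

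Concretely, I would first fix a filtration $\beta = \alpha^0 \supset \alpha^1 \supset \cdots \supset \alpha^b = \alpha$ where $\alpha^{i-1}/\alpha^i$ is the box in column $c_i$, and re-derive the formula
\[
T_\beta \mapsto \beta_{c_1} \beta_{c_2} \cdots \beta_{c_b} \, x_{c_1} \wedge x_{c_2} \wedge \cdots \wedge x_{c_b} \otimes T_\alpha + o(T_\alpha),
\]
by the same induction on $b$: at each stage, among the sequences $J \in B_{c_i}$ appearing in \eqref{olvermap}, only $J = (0 < c_i)$ contributes a term whose $\Sc_{\alpha^i}V$-component can equal $T_\alpha$ with nonzero coefficient after the later maps are applied, and the coefficient contributed is $\beta_{c_i}$ (from $\tau_{0,c_i}$ acting on the canonical tableau, the box labelled $c_i$ can be removed $\beta_{c_i}$ ways). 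The wedge $x_{c_1}\wedge\cdots\wedge x_{c_b}$ is nonzero precisely because the $c_i$ are pairwise distinct, which is exactly the horizontal-strip condition; and each $\beta_{c_i} > 0$ since column $c_i$ of $\beta$ is nonempty. So the composition is nonzero for this particular filtration.

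For an arbitrary filtration, I would again reduce to transpositions: since $\beta/\alpha$ is a horizontal strip, any reordering of the $b$ box-removals corresponds to a valid filtration, and two filtrations differing by an adjacent transposition $s_i$ with $c_i \ne c_{i+1}$ are related by the same scalar factor $\bigl(1 - (\beta_{c_i} - \beta_{c_{i+1}} + c_{i+1} - c_i)^{-1}\bigr)$ computed in \eqref{nonzeroolver} — that computation takes place entirely in $V \otimes V \otimes \Sc_\alpha V$ before the projection to $\Sc_b V$ or $\bigwedge^b V$, so it is identical here, and the factor is nonzero because $|\beta_{c_i} - \beta_{c_{i+1}} + c_{i+1} - c_i| \ge 1$ with equality impossible (if $c_i < c_{i+1}$ then $\beta_{c_i} \ge \beta_{c_{i+1}}$ forces the quantity $\ge 2$; the reversed case is symmetric, and $c_i = c_{i+1}$ cannot occur). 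Since the symmetric group is generated by transpositions, every $f_\sigma$ is a nonzero scalar multiple of $f_1$, so all filtrations give nonzero maps. The only point requiring genuine care — and the one I would expect to be the main obstacle — is verifying that no term of $o(T_\alpha)$ can "feed back" into $x_{c_1}\wedge\cdots\wedge x_{c_b}\otimes T_\alpha$ under the later maps in the diagram; but this is handled by the same weight-monotonicity bookkeeping as in the lemma, since the Olver maps and the projection onto $\bigwedge^b V$ never raise the weight of the $\Sc_\alpha V$-component.
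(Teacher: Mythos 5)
Your first half---fixing a filtration, tracking the canonical tableau, and observing that the leading term $\beta_{c_1}\cdots\beta_{c_b}\, x_{c_1}\wedge\cdots\wedge x_{c_b}\otimes T_\alpha$ survives because the horizontal-strip hypothesis makes the $c_i$ distinct---is correct and is essentially what the paper's (very short) proof takes over from Lemma~\ref{olvercompositionlemma} without change. The gap is in your treatment of arbitrary filtrations. You claim the transposition scalar is the same factor $1-\frac{1}{\beta_{c_i}-\beta_{c_{i+1}}+c_{i+1}-c_i}$ as in \eqref{nonzeroolver} because ``that computation takes place entirely in $V\otimes V\otimes\Sc_\alpha V$ before the projection.'' It does not: in $V\otimes V\otimes \Sc_\alpha V$ the two relevant contributions are the \emph{distinct} basis vectors $x_j\otimes x_i\otimes T_\alpha$ and $x_i\otimes x_j\otimes T_\alpha$, and they only merge into a single coefficient after projecting onto $\Sc_2 V$ or $\bigwedge^2 V$---which is exactly where the two cases differ. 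Under the antisymmetric projection $x_j\wedge x_i=-x_i\wedge x_j$, so the relative sign flips and the factor becomes $1+\frac{1}{\beta_i-\beta_j+j-i}$; this sign change is precisely the content of the paper's proof of the corollary.

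Your attempted justification of nonvanishing is also false as stated: you assert $\beta_{c_i}-\beta_{c_{i+1}}+c_{i+1}-c_i\ge 2$, but for a horizontal strip equal adjacent columns are allowed, e.g.\ $\alpha=(1,1)$, $\beta=(2,2)$ gives the value $1$, where your factor $1-\frac{1}{1}$ vanishes. (And indeed in that example the composite into $\Sc_2 V\otimes\Sc_\alpha V$ really is zero, since $\beta/\alpha$ is not a vertical strip, while the composite into $\bigwedge^2 V\otimes\Sc_\alpha V$ is not---so the scalar genuinely cannot be the same one.) With the corrected factor $1+\frac{1}{\beta_i-\beta_j+j-i}$ nonvanishing is immediate, since $\beta_i\ge\beta_j$ and $j>i$ make the denominator at least $1$; after this repair your argument coincides with the paper's.
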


\begin{proof} It is enough to note that in modifying the proof of
  Lemma~\ref{olvercompositionlemma} to work for $\bigwedge V$, the
  only change is in \eqref{nonzeroolver}, where $\DS 1 -
  \frac{1}{\beta_i - \beta_j + j - i}$ is replaced by $\DS 1 +
  \frac{1}{\beta_i - \beta_j + j - i}$, which is also nonzero.
\end{proof}

\section{Equivariant resolutions for the general linear
  group.} \label{equivariantressection}

In Section~\ref{purefreesection}, we recall the construction of
Eisenbud, Fl\o ystad, and Weyman for pure free equivariant resolutions
(Theorem~\ref{equivariantres}) which resolves a special class of Pieri
inclusions. We generalize this construction to the case of an
arbitrary Pieri inclusion $\phi(\alpha, \beta)$ and to direct sums of
Pieri inclusions. In order to describe our resolution we introduce
some combinatorial notions in Section~\ref{criticalboxessection}. The
actual resolution (and some generalizations) are given in
Section~\ref{pieriresolutionsection}, and examples are given in
Section~\ref{examplesection}.

\subsection{Pure free resolutions.} \label{purefreesection}

In this section, we describe the equivariant pure resolutions of
\cite{efw} and prove their acyclicity using Lemma~\ref{olvercompositionlemma}. We note that Lemma~\ref{acycliclemma} and Theorem~\ref{equivariantres} can also be found in \cite[\S 8]{olver} and \cite[Theorem 8.11]{olver}, respectively.

First we recall the notion of a pure free resolution. Every graded
$A$-module $M$ has a minimal graded free resolution $F_\bullet \to M
\to 0$ which is unique up to isomorphism. The number of minimal
generators of degree $i$ of $F_j$ is $\B(M)_{i,j}$, which are the {\bf
  graded Betti numbers} of $M$. The usual convention for representing
Betti numbers is via a {\bf Betti diagram/table}: this is an array of
numbers whose $i$th column and $j$th row contains
$\B(M)_{i,j-i}$. Thinking of $K = A/(x_1, \dots, x_n)$ as a trivial
$A$-module, we note that $\B(M)_{i,j} = \dim_K \Tor^A_j(M, K)_i$. Then
$M$ has a {\bf pure free resolution} if for each $i$, $\B(M)_{i,j}$ is
nonzero for at most one value of $j$, i.e., each syzygy module of a
minimal free resolution of $M$ is generated in a single degree. We
also say that $\B(M)_{i,j}$ is a {\bf pure Betti diagram/table}. In
this case, we define $d_i$ to be the degree of $F_i$, and the sequence
$d = (d_i)$ is the {\bf degree} of $F_\bullet$.

Let $\alpha$ and $\beta$ be partitions such that $\beta / \alpha$ is a
vertical strip of size $b > 0$. By choosing a scalar multiple for the
Pieri inclusion $\Sc_\beta V \to \Sc_b V \otimes \Sc_\alpha V$, we get
a uniquely determined (up to a nonzero scalar) equivariant map of
$A$-modules
\[
\phi(\alpha ,\beta ) \colon A(-b) \otimes \Sc_\beta V \to A \otimes
\Sc_\alpha V
\]
of degree 0. (Here $A(a)$ denotes a grading shift by $a$.) Our goal is
to describe an equivariant minimal free resolution of the $A$-modules
$\coker(\phi(\alpha, \beta))$. First, we recall the case when the
cokernel has finite length. This corresponds to the case when $\beta /
\alpha$ contains boxes only in the first column.

Set $e_1 = \beta_1 - \alpha_1$; for $i > 1$ set $e_i = \alpha_{i-1} -
\alpha_i + 1$. Define a sequence $d = (d_0, \dots, d_n)$ by $d_0 = 0$
and $d_i = e_1 + \cdots + e_i$ for $i \ge 1$, and define some
partitions
\[
\alpha(d, i) = (\alpha_1 + e_1, \alpha_2 + e_2, \dots, \alpha_i + e_i,
\alpha_{i+1}, \dots, \alpha_n)
\]
for $1 \le i \le n$. Define graded $A$-modules ${\bf F}(d)_i$ for $0
\le i \le n$ by
\begin{align*}
  {\bf F}(d)_0 &= A \otimes \Sc_\alpha V\\
  {\bf F}(d)_i &= A(-e_1 - \cdots - e_i) \otimes \Sc_{\alpha(d,i)} V,
  \quad (1 \le i \le n).
\end{align*}
The natural action of $\GL(V)$ on $A = \bigoplus_{i \ge 0} \Sc_{i}
V$ and of $\GL(V)$ on $\Sc_{\alpha(d,i)} V$ gives an action of
$\GL(V)$ on ${\bf F}(d)_i$. Picking a Pieri inclusion
\[ 
\psi \colon \Sc_{\alpha(d,i)} V \to \Sc_{e_i} V \otimes
\Sc_{\alpha(d,i-1)} V
\]
and identifying $\Sc_{e_i} V = \Sym^{e_i} V$ gives a degree 0 map
$\partial_i \colon {\bf F}(d)_i \to {\bf F}(d)_{i-1}$ defined by
$\partial_i(p(x) \otimes v) = p(x) \cdot \psi(v)$ where $p(x) \in A$
and $v \in \Sc_{\alpha(d,i)} V$.

Olver's description of the Pieri inclusion gives the following
lemma, which greatly simplifies the proof of
Theorem~\ref{equivariantres} compared to the original proof found in
\cite{efw}. 

\begin{lemma}[Olver] \label{acycliclemma} Pick $\mu / \nu \in \VS$. Given a
  partition $\lambda$ such that $\lambda / \mu \in \VS$ and $\lambda /
  \nu \in \VS$, the composition $\Sc_\lambda V \to A \otimes \Sc_\mu V
  \to A \otimes \Sc_\nu V$, where the first map is a Pieri inclusion
  and the second map is induced by a Pieri inclusion, is nonzero.
\end{lemma}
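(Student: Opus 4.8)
The plan is to reduce the statement to a single explicit computation in the style of Lemma~\ref{olvercompositionlemma}, namely that a particular canonical tableau has nonzero image under the composition. Write $b = |\mu/\nu|$ and $c = |\lambda/\mu|$, and observe that the hypotheses $\lambda/\mu \in \VS$ and $\lambda/\nu \in \VS$ both make sense (the second forces, in particular, that the box positions of $\mu/\nu$ and $\lambda/\mu$ interleave appropriately so that $\lambda/\nu$ is again a vertical strip). Since the ambient maps are graded $A$-linear, the composition restricted to the generating representation $\Sc_\lambda V \subset A(-c) \otimes \Sc_\lambda V$ lands in $\Sc_{b+c} V \otimes \Sc_\nu V$ (after multiplying the two $A$-factors), and it suffices to show this map of $\GL(V)$-representations is nonzero. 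By Pieri's formula this target summand appears with multiplicity one, so nonvanishing on any single vector suffices.

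The key step is to track the canonical tableau $T_\lambda$. First I would pick a convenient filtration: remove the $c$ boxes of $\lambda/\mu$ from $\lambda$ down to $\mu$ using Olver maps, then remove the $b$ boxes of $\mu/\nu$ from $\mu$ down to $\nu$. By the first paragraph of the proof of Lemma~\ref{olvercompositionlemma} (applied twice), the image of $T_\lambda$ under the first $c$ steps is $N \cdot (\text{monomial}) \otimes T_\mu + o(T_\mu)$ with $N \ne 0$, where $o(T_\mu)$ denotes terms whose $\Sc_\mu V$-component has weight strictly below that of $T_\mu$; applying the remaining $b$ steps to the leading term gives $N' \cdot (\text{monomial}) \otimes T_\nu + o(T_\nu)$ with $N' \ne 0$. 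The point is then that the lower-order terms $o(T_\mu)$ cannot, after the second batch of maps, contribute anything with $\Sc_\nu V$-component equal to $T_\nu$: this is because each Olver map $\tau_J$ only lowers or preserves the weight in the $\Sc$-factor, and $T_\nu$ is the highest weight vector, so a strictly lower weight can never be raised back up to it. Hence the coefficient of the top monomial tensor $T_\nu$ in the image of $T_\lambda$ is exactly $N' \ne 0$, and the composition is nonzero.

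The main obstacle is the bookkeeping needed to legitimately invoke the "leading term'' computation of Lemma~\ref{olvercompositionlemma} across the two stages: one must check that the weight-ordering argument is compatible with the intermediate tensor factors $\Sc_i V$ that accumulate, i.e., that multiplying by monomials in the $A$-part never interferes with the claim that $o(T_\mu)$ stays in the span of vectors whose $\Sc_\nu$-component is below $T_\nu$. This is where I would be most careful: the cleanest formulation is to fix the ordered basis $x_1 < \dots < x_n$, note that every Olver map $\tau_{ij}$ sends a tableau to a sum of tableaux whose column-reading weight is $\le$ the original in dominance order on the relevant $\Sc$-factor, and that equality forces the removed-and-reappended box to have been the "expected'' one; then the leading-term tracking is just the statement that there is a unique chain of expected choices, exactly as in the $b=2$ analysis in \eqref{nonzeroolver}. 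Note also that the independence-of-filtration clause of Lemma~\ref{olvercompositionlemma} is not actually needed here — a single good filtration suffices for nonvanishing — which keeps the argument short.
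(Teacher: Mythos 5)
Your reduction to a statement about $\GL(V)$-representations and your choice of the filtration of $\lambda/\nu$ passing through $\mu$ are the same first moves as in the paper, but the key step of your argument has a genuine gap. You claim that the lower-order terms $o(T_\mu)$ cannot, after the second batch of Olver maps, contribute to the coefficient of the top monomial tensor $T_\nu$, because Olver maps ``only lower or preserve the weight in the $\Sc$-factor.'' That is not the right invariant: each Olver map strictly lowers the tableau weight by the label it removes and transfers that weight into the monomial factor, so a term whose tableau component has weight strictly below $\mu$ can still, after removing a \emph{different} multiset of labels, land exactly on the prescribed monomial tensored with $T_\nu$. The paper's own two-box computation \eqref{nonzeroolver} is precisely such a cross term: the summand $C_2\, x_i \otimes L$, whose tableau $L$ has weight strictly lower than that of $T_{\beta'}$, contributes $-\tfrac{\beta_i\beta_j}{\beta_i-\beta_j+j-i}\, x_i x_j T_\alpha$ to the coefficient of $x_i x_j T_\alpha$, and the nonvanishing of the total coefficient is the genuine computation $1-\tfrac{1}{\beta_i-\beta_j+j-i}\neq 0$, not a weight argument. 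Your situation is generic for this phenomenon: boxes of $\mu/\nu$ may lie in columns to the left of boxes of $\lambda/\mu$, so the concatenated filtration is not in the decreasing-column order for which the leading-term induction in the proof of Lemma~\ref{olvercompositionlemma} applies, and the cross terms genuinely alter (and could a priori cancel) the leading coefficient.

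For the same reason, your closing remark that the independence-of-filtration clause of Lemma~\ref{olvercompositionlemma} is ``not actually needed'' is exactly backwards. The factorization of the composition forces on you the filtration that removes $\lambda/\mu$ before $\mu/\nu$; you cannot trade it for a good ordering without already knowing that all filtrations give the same map up to a nonzero scalar. The paper's proof consists of exactly this: by Lemma~\ref{olvercompositionlemma}, each of the two Pieri inclusions is, up to nonzero scalars, a composite of single-box Olver maps, so the whole composition is the composite of Olver maps along a single-box filtration of $\lambda/\nu$ through $\mu$; since $\lambda/\nu$ is a vertical strip by hypothesis, the final clause of Lemma~\ref{olvercompositionlemma} (nonvanishing for \emph{every} filtration) applies and finishes the proof. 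To make your argument work you must either invoke that clause or reproduce the cancellation analysis behind it (the transposition/coset computation); the weight-monotonicity claim does not substitute for it.
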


\begin{proof} Pick a filtration of partitions $\lambda = \lambda^0
  \supset \lambda^1 \supset \cdots \supset \lambda^s = \nu$ such that
  $\lambda^r = \mu$ for $r = |\lambda / \mu|$, and such that
  $\lambda^j / \lambda^{j+1}$ is a single box for all $j$. By
  Lemma~\ref{olvercompositionlemma}, $\partial_i \colon A \otimes
  \Sc_\mu V \to A \otimes \Sc_\nu V$ is equal to a composition of
  nonzero scalar multiples of Olver maps, one for each piece of the
  filtration $\lambda^r \supset \lambda^{r+1} \supset \cdots \supset
  \lambda^s$. Again by Lemma~\ref{olvercompositionlemma}, the
  composition of $\partial_i$ with the composition of Olver maps
  $\Sc_\lambda V \to A \otimes \Sc_\mu V$ corresponding to the
  filtration $\lambda^0 \supset \lambda^1 \supset \cdots \supset
  \lambda^r$ is nonzero, which proves the claim.
\end{proof}

For the following theorem, we point out that the notation $(\lambda,
\beta) \notin \VS$ means that either $\beta \not\subseteq \lambda$, or
that $\beta \subseteq \lambda$, but $\lambda / \beta$ is not a
vertical strip.

\begin{theorem}[Olver, Eisenbud--Fl{\o}ystad--Weyman] \label{equivariantres}
  With the notation above,
  \[
  \xymatrix{ 0 \ar[r] & {\bf F}(d)_n \ar[r]^-{\partial_n} & \cdots
    \ar[r]^-{\partial_2} & {\bf F}(d)_1 \ar[r]^-{\partial_1} & {\bf
      F}(d)_0 }
  \]
  is a $\GL(V)$-equivariant minimal graded free resolution of $M(d) =
  \coker \partial_1 = \coker \phi(\alpha, \beta)$, which is pure of
  degree $d$. Furthermore, there is an isomorphism (as
  $\GL(V)$-representations) 
  \[
  M(d) \cong \bigoplus_{\substack{ (\lambda, \alpha) \in \VS \\
      (\lambda, \beta) \notin \VS}} \Sc_\lambda V.
  \]
\end{theorem}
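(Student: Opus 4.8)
The plan is to prove the two assertions together by downward induction, using the chain complex ${\bf F}(d)_\bullet$ itself to organize the bookkeeping. First I would verify that the maps $\partial_i$ do compose to zero: since $\alpha(d,i)/\alpha(d,i-1)$ is a vertical strip (in fact a horizontal one living in a single column, by construction of the $e_i$), and the differentials are built from Pieri inclusions, the composite $\partial_{i-1}\partial_i$ lands in the ``wrong'' isotypic components and must vanish by Schur's lemma — more precisely, one checks that no irreducible $\Sc_\lambda V$ with $\lambda/\alpha(d,i-2)$ of the required size can appear both in $\Sym V^{\otimes}\otimes \Sc_{\alpha(d,i)}V$ with the relevant grading and survive. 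The substance is exactness. Here the key tool is Lemma~\ref{acycliclemma}: I would decompose each ${\bf F}(d)_i$ into isotypic pieces $\Sc_\lambda V$ (tensored with the appropriate Schur components of $A$), track which $\lambda$ occur in each homological degree, and show that $\partial_i$ restricted to each fixed $\lambda$-isotypic strand is either injective or zero according to a purely combinatorial rule on whether $\lambda/\alpha(d,i)$ and $\lambda/\alpha(d,i-1)$ are vertical strips.

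Concretely: by Pieri's formula, $A\otimes \Sc_{\alpha(d,i)}V = \bigoplus_{(\lambda,\alpha(d,i))\in\VS}\Sc_\lambda V$, graded so that $\Sc_\lambda V$ sits in internal degree $|\lambda/\alpha(d,i)|$; hence ${\bf F}(d)_i$ contributes $\Sc_\lambda V$ in resolution-degree $d_i+|\lambda/\alpha(d,i)|$ whenever $(\lambda,\alpha(d,i))\in\VS$. One then observes that the shifts are arranged precisely so that a given $\lambda$ appears in ${\bf F}(d)_i$ and ${\bf F}(d)_{i-1}$ in the same internal degree exactly when $d_i+|\lambda/\alpha(d,i)| = d_{i-1}+|\lambda/\alpha(d,i-1)|$, which forces the box of $\alpha(d,i)/\alpha(d,i-1)$ (there are $e_i$ of them, all in row $i$) to lie outside $\lambda$, i.e.\ $\lambda_i \le \alpha(d,i-1)_i = \alpha_i$. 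I would show that for each fixed $\lambda$, the set of indices $i$ for which $(\lambda,\alpha(d,i))\in\VS$ forms a contiguous interval $\{i : p(\lambda)\le i \le q(\lambda)\}$, and that on this strand the differential is, by Lemma~\ref{acycliclemma}, a nonzero (hence, by Schur, an isomorphism onto a summand) map of one-dimensional multiplicity spaces between consecutive terms. This makes the $\lambda$-strand of ${\bf F}(d)_\bullet$ an exact complex except at its top end $i=p(\lambda)$, where the cokernel contributes exactly $\Sc_\lambda V$ to $M(d)=\coker\partial_1$. The condition for $\lambda$ to have $p(\lambda)=0$ — i.e.\ to survive into $M(d)$ — is exactly $(\lambda,\alpha)\in\VS$ but $(\lambda,\alpha(d,1))\notin\VS$, and since $\alpha(d,1)$ differs from $\beta$ only in rows $>1$ in a way that is forced by the vertical strip condition, one checks this is equivalent to $(\lambda,\beta)\notin\VS$. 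This simultaneously proves exactness of ${\bf F}(d)_\bullet$ in positive degrees, identifies $M(d)$ as a $\GL(V)$-representation, and (since the Pieri maps carry no constant term) shows minimality.

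The main obstacle I expect is the combinatorial lemma that for each $\lambda$ the ``support interval'' $\{i:(\lambda,\alpha(d,i))\in\VS\}$ is a single contiguous block with $\partial$ nonzero across each consecutive pair and zero at the boundary. The forward inclusions ($\lambda/\alpha(d,i)\in\VS$ and $\lambda/\alpha(d,i\pm1)\in\VS$ with matching degree) needed to invoke Lemma~\ref{acycliclemma} require a careful case analysis of how adding $e_i$ boxes in row $i$ interacts with the vertical-strip constraint $\lambda_j\le\alpha(d,i)_{j-1}$; the definition $e_i=\alpha_{i-1}-\alpha_i+1$ is precisely engineered to make these intervals behave, and checking that engineering works is where the real work lies. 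Once that is in hand, the alternating-sum/rank count is automatic and the representation-theoretic identification of $M(d)$ drops out by comparing the two sides term by term in each isotypic component.
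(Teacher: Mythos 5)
Your overall strategy is the paper's: decompose each ${\bf F}(d)_i$ into $\GL(V)$-isotypic strands via Pieri's rule (which is multiplicity-free), invoke Lemma~\ref{acycliclemma} for nonvanishing of the differential on a strand, and use Schur's lemma plus $e_i\ge 1$ for the vanishing statements and minimality. But the strand bookkeeping you sketch contains errors, and the combinatorial lemma you defer the ``real work'' to is not the one that is needed. The internal degrees need no engineering at all: the copy of $\Sc_\lambda V$ inside ${\bf F}(d)_i$ sits in total degree $d_i+|\lambda/\alpha(d,i)|=|\lambda|-|\alpha|$, independent of $i$ (since $|\alpha(d,i)|=|\alpha|+d_i$); your deduction that degree matching ``forces'' $\lambda_i\le\alpha_i$ is false, as it contradicts $\lambda\supseteq\alpha(d,i)$. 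More seriously, contiguity of the support interval $\{i:(\lambda,\alpha(d,i))\in\VS\}$ is immediate (as $i$ grows the containment condition only tightens and the strip condition only loosens) and is \emph{not} sufficient: contiguity alone permits a strand consisting of a single term in homological degree $i\ge1$, which would be nonzero homology there. The facts the choice $e_i=\alpha_{i-1}-\alpha_i+1$ is actually engineered to give, and which the paper's proof rests on, are: (a) if $(\lambda,\alpha(d,i+1))\in\VS$ then $(\lambda,\alpha(d,i-1))\notin\VS$, so every strand occupies at most two consecutive positions and $\partial_{i-1}\partial_i=0$ by Schur's lemma; and (b) if $(\lambda,\alpha(d,i))\in\VS$ and $(\lambda,\alpha(d,i-1))\notin\VS$ with $i\ge1$, then $(\lambda,\alpha(d,i+1))\in\VS$ (and the hypothesis cannot occur at $i=n$), so one-term strands occur only in degree $0$; together with Lemma~\ref{acycliclemma} this is exactly what gives acyclicity. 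Your plan never states (a) or (b), so the exactness step would fail as written.

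Relatedly, your description of a strand as ``exact except at its top end $p(\lambda)$, where the cokernel contributes $\Sc_\lambda V$ to $M(d)$'' is wrong for two-term strands: there the map of one-dimensional multiplicity spaces is an isomorphism, so the strand is exact everywhere, including at its lower end, and contributes nothing to $M(d)$. The partitions $\lambda$ contributing to $M(d)=\coker\partial_1$ are those whose strand is the singleton $\{0\}$, i.e. $(\lambda,\alpha)\in\VS$ and $(\lambda,\alpha(d,1))\notin\VS$ --- not merely $p(\lambda)=0$. Your stated final answer is nevertheless correct because in this theorem $\beta/\alpha$ lies entirely in the first column, so $\alpha(d,1)=\beta$ on the nose (there is no discrepancy ``in rows $>1$'' to reconcile). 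So the route is the paper's route, but the verification you postpone is misidentified, and the intermediate claims would need to be replaced by (a) and (b) above for the argument to go through.
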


The proof in \cite{efw} uses the Borel--Weil--Bott theorem. However,
we appeal only to Olver's description of the Pieri inclusion.

\begin{proof}[Proof of Theorem~\ref{equivariantres}] 
  That ${\bf F}(d)_\bullet$ is a complex is obvious: we have picked
  the partitions $\alpha(d,i)$ so that for any partition $\lambda$
  such that $(\lambda, \alpha(d,i+1)) \in \VS$, we have $(\lambda,
  \alpha(d,i-1)) \notin \VS$, and then we use that the $\Sc_\lambda V$
  are irreducible representations of $\GL(V)$. That it is acyclic
  follows from almost the same reason: if $(\lambda, \alpha(d,i-1))
  \notin \VS$ and $(\lambda, \alpha(d,i)) \in \VS$, then we have
  $(\lambda, \alpha(d,i+1)) \in \VS$ by our choices of
  $\alpha(d,i)$. However, one needs to know that the image of
  $\Sc_\lambda V$ under the map ${\bf F}(d)_{i+1} \to {\bf F}(d)_i$ is
  not zero, and this is the content of Lemma~\ref{acycliclemma}.
\end{proof}

\begin{eg} Let $\alpha = (3,1,0,0)$ and $\beta = (5,1,0,0)$, so that
  $d = (0,2,5,7,8)$ and $e = (2,3,2,1)$. Then $\alpha(d,i)$ is the
  partition such that $D(\alpha(d,i))$ is the subdiagram of the
  following diagram consisting of boxes with labels $\le i$:
  \[
  \tableau[scY]{0,0,3,4 | 0,2,3 | 0,2 | 1,2 | 1},
  \]
  and the complex ${\bf F}(d)_\bullet$ looks like (where we use
  $\lambda$ as shorthand for $\Sc_\lambda V$)
  \[
  0 \to (5,4,2,1) \to (5,4,2,0) \to (5,4,0,0) \to (5,1,0,0) \to
  (3,1,0,0).
  \]
\end{eg}

\begin{rmk}[Pure free resolutions over $\bigwedge
  V$] \label{exteriorpurefree} \rm Let $B = \bigwedge V$ be the
  exterior algebra of $V$. Given $\alpha$ and $\beta$ as before such
  that $\beta / \alpha$ contains boxes only in the first
  column. Define $\alpha(d,1) = \beta$, and $\alpha(d,i)$ for $i > 1$
  by
  \[
  \alpha(d,i)_j = \begin{cases} \beta_1, & \text{if } j=1,\\
    \alpha_{j-1} + 1, & \text{if } 2 \le j \le \min(i,n+1),\\
    \alpha_j, & \text{if } i < j \le n+1,\\
    1, & \text{if } n+1 < j \le i.
  \end{cases}
  \]
  Also, set ${\bf F}'_0 = B \otimes \Sc_{\alpha^*} V$ and ${\bf F}'_i
  = B(-|\alpha(d,i) / \alpha|) \otimes \Sc_{\alpha(d,i)^*} V$ for
  $i>1$. Picking Pieri inclusions
  \[
  \Sc_{\alpha(d,i)^*} V \to \bigwedge^{e_i} V \otimes
  \Sc_{\alpha(d,i-1)^*} V
  \]
  where $e_i = |\alpha(d,i) / \alpha(d,i-1)|$ gives $B$-linear
  equivariant differentials ${\bf F}'_i \to {\bf F}'_{i-1}$. An
  analogue of Lemma~\ref{acycliclemma} using
  Corollary~\ref{exteriorolver} shows that ${\bf F}'_\bullet$ is a
  free resolution of the cokernel of ${\bf F}'_1 \to {\bf F}'_0$.
\end{rmk}

\subsection{Critical boxes and admissible
  subsets.}\label{criticalboxessection} 

We consider partitions with at most $n$ parts and identify their Young
diagrams as subsets of a grid $L$ of boxes with $n$ columns, going
infinitely downwards. The boxes in this grid can be thought of as
pairs $(j,i)$ with $j=0,1,\dots$ and $1\le i\le n$. In this case, $i$
is the {\bf column index} of $(j,i)$. Recall that the notation
$\lambda > \mu$ refers to lexicographic ordering of partitions (see
Section~\ref{reptheorysection}).

Let $\alpha$ and $\beta$ be partitions such that $(\beta, \alpha) \in
\VS$. Let $c_1 < c_2 < \cdots < c_m$ denote the indices of the columns
of the skew shape $\beta / \alpha$. Define the set of {\bf critical
  boxes} as follows
\[
C(\alpha, \beta) = \{ (\alpha_{j-1}+1, j) \in L \mid c_1 < j \le n \}
\]
We shall sometimes refer to the critical boxes by their column
indices. Given a subset $J \subseteq C(\alpha, \beta)$, we denote by
$\beta(J)$ the smallest partition whose Young diagram contains both
$\beta$ and $J$. The subsets $J \subseteq C(\alpha, \beta)$ whose
column indices are unions of subsets of consecutive integers of the
form $\{c_i+1, c_i+2, \dots,j\}$ are {\bf admissible}. By convention,
the empty set is admissible. The set of all admissible subsets is
denoted $\Ad(\alpha; \beta)$, and we define
\[
\Ad(\alpha; \beta)_i = \{ J \in \Ad(\alpha; \beta) \mid \#J+1 = i\},
\]
where $\#\emptyset = 0$. This definition will only be used in
Theorem~\ref{r=1}.

\begin{eg} Let $n=8$, $\alpha = (4,4,3,2,1,0,0,0)$, and $\beta =
  (5,4,3,2,2,1,0,0)$. In the following picture, $\alpha$ is the set of
  white boxes, $\beta / \alpha$ is the set of framed boxes, and the
  critical boxes are marked with the symbol $\times$. The black boxes
  are holes and are not part of the diagram.
  \[
  \tableau[scY]{\ ,\ ,\ ,\ ,\ ,\tf , \bl \times, \bl \times |
  \ ,\ ,\ ,\ , \tf , \bl \times | 
  \ ,\ ,\ ,\fl , \bl \times |
  \ ,\ ,\fl , \bl \times | 
  \tf , \bl \times, \bl \times}
  \]
  From this, we can see that the column indices of the admissible
  subsets are arbitrary unions of the subsets $\{2\}$, $\{2,3\}$,
  $\{2,3,4\}$, $\{2,3,4,5\}$,
  $\{6\}$, 
  $\{7\}$, and $\{7,8\}$.
\end{eg}

We will need the combinatorics of admissible subsets in one other
setting. Suppose we are given partitions $\alpha$ and $\beta^1 >
\cdots > \beta^r$ such that $(\beta^j, \alpha) \in \VS$ for all $j$,
and $\beta^j \not\subseteq \beta^k$ for $j \ne k$. Suppose also that
$\beta^j / \alpha$ only contains boxes in a single column $c_j$ for
$j=1,\dots,r$. By our assumptions, $c_1 < \cdots < c_r$ and $r \le n$.

From before, we have already defined the sets $\Ad(\alpha; \beta^j)$
for $j=1,\dots,r$. Set $S = \{ (I, (J_i)_{i \in I}) \mid I \subseteq
\{1,\dots,r\},\ J_i \in \Ad(\alpha; \beta^i)\}$.  Note that the $J_i$
may be empty. 
For an element $J = (I, (J_i)_{i \in I}) \in S$, we define
\begin{align*}
  \beta(J) &= \bigcup_{i \in I} \beta^i(J_i), \quad s(J) = \sum_{i \in
    I} (\#J_i + 1).
\end{align*}

We will define the sets $\ol{\Ad}(\alpha; \beta)_i$ and $\Ad(\alpha;
\beta)_i$ by induction on $i$. First, we have $\ol{\Ad}(\alpha;
\beta)_1 = \Ad(\alpha; \beta)_1 = \{ (\{1\}, \emptyset), \dots,
(\{r\}, \emptyset) \}$. In general, let $\ol{\Ad}(\alpha; \beta)_i$ be
the set of $J = (I, (J_i)_{i \in I}) \in S$ such that $\# I = i$ and
such that there does not exist $J' \in \ol{\Ad}(\alpha; \beta)_{i'}$
with $i' < i$ and $\beta(J') = \beta(J)$. We will refer to this last
condition as the {\bf irredundancy condition}. In our setting, this is
equivalent to asking that $\beta(J) / \alpha$ have exactly $s(J)$
columns. Finally, let $\Ad(\alpha; \beta)_i$ denote the admissible
sets $J \in \ol{\Ad}(\alpha; \beta)_i$ such that $\beta(J)$ is a
minimal partition (with respect to inclusion) of the set $\{ \beta(J')
\mid J' \in \ol{\Ad}(\alpha; \beta)_i \}$.

We claim that if $J, J' \in \ol{\Ad}(\alpha; \beta)_i$, then $\beta(J)
\ne \beta(J')$. To see this, let $c_{i_1}$ be the first column index
of $\beta(J) / \alpha$ which is nonempty, and let $J_{i_1} =
\{c_{i_1}, c_{i_1} + 1, \dots, c_{i_1} + k_1\}$ be the longest
consecutive sequence of indices such that the bottom box of column
$c_{i_1} + j$ in $\beta(J)$ is in row $\alpha_{c_{i_1} + j - 1} + 1$
for $j=1,\dots,k_1$. Define $c_{i_2}$ to be the next column index of
$\beta(J) / \alpha$, and define $J_{i_2}$ similarly, etc. We set $J =
(\{i_1, \dots, i_t \}, (J_{i_1}, \dots, J_{i_t}))$. Our procedure
minimizes $t$, which we need to do since we are assuming that $J \in
\ol{\Ad}(\alpha; \beta)$. Furthermore, the choice of indices $\{i_1,
\dots, i_t\}$ uniquely determines the corresponding partitions
$\beta^{j_1}, \dots, \beta^{j_t}$ for which $\beta^{j_k} / \alpha$ is
a single column in column index $i_k$. Namely, we need to take $i_k =
j_k$ for all $k$ by our assumptions on $\alpha$ and $\beta^1, \dots,
\beta^r$, so the claim follows.

This definition will be used in Corollary~\ref{singlecolumns} and in
the proof of Theorem~\ref{r=1}.

\subsection{Pieri resolutions for the general linear
  group.} \label{pieriresolutionsection}

Let $\alpha$ and $\beta^1 > \cdots > \beta^r$ be partitions such that
$(\beta^i, \alpha) \in \VS$ for $i=1,\dots,r$ such that $\beta^i
\not\subseteq \beta^j$ if $i \ne j$. In this section we are concerned
with the minimal free resolution of the cokernel of
\begin{align} \label{cokernelpresentation}
\bigoplus_{i=1}^r A(-|\beta^i / \alpha|) \otimes \Sc_{\beta^i} V \to A
\otimes \Sc_\alpha V,
\end{align}
where the maps are induced by Pieri inclusions. The maps of this form
give presentations of arbitrary equivariant factors of the free module
$A \otimes \Sc_\alpha V$. Let us briefly explain our choice of
assumptions on $\alpha$ and the $\beta^i$. The assumption on the
ordering of the $\beta^i$ is of course harmless as $>$ is a total
order, and the assumption that $(\beta^i, \alpha) \in \VS$ is
necessary to ensure that a nonzero map of the form $A(-|\beta^i /
\alpha|) \otimes \Sc_{\beta^i} V \to A \otimes \Sc_\alpha V$
exists. The assumption $\beta^i \not\subseteq \beta^j$ is made to
eliminate nonminimality: if $\beta^i \subseteq \beta^j$, then the
image of $A(-|\beta^j / \alpha|) \otimes \Sc_{\beta^j} V$ will be
contained in the image of $A(-|\beta^i / \alpha|) \otimes
\Sc_{\beta^i} V$.

We will denote this minimal resolution by ${\bf F}(\alpha;
\beta)_\bullet := {\bf F}(\alpha; \beta^1, \ldots, \beta^r)_\bullet$
and call it a {\bf Pieri resolution} even though we do not yet have
the precise knowledge of its terms.

We first give an inductive procedure for building a free resolution
using just the knowledge of the structure of Pieri resolutions in the
case $r=1$. We use this inductive procedure in
Corollary~\ref{singlecolumns} to give an explicit description in the
case where each $\beta^i / \alpha$ is a single column. Finally we use
this special case to describe explicitly the Pieri resolutions when
$r=1$. 

\begin{theorem} \label{r>1} Let $\alpha$ and $\beta^1 > \cdots >
  \beta^r$ be partitions such that $(\beta^i, \alpha) \in \VS$ for
  $i=1,\dots,r$ and $\beta^i \not\subseteq \beta^j$ for $i \ne j$. An
  equivariant free graded resolution ${\bf F}'(\alpha; \beta)_\bullet$
  of
  \[
  M = \coker \big( \bigoplus_{i=1}^r A(-|\beta^i / \alpha|) \otimes
  \Sc_{\beta^i} V \to A \otimes \Sc_\alpha V \big)
  \]
  can be expressed as an iterated mapping cone of Pieri resolutions
  coming from the case $r=1$. The length of ${\bf F}'(\alpha;
  \beta)_\bullet$ is $\le n+1-c$, where $c$ denotes the index of the
  first column of $\beta^1 / \alpha$.
\end{theorem}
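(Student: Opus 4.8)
The plan is to build the resolution by induction on $r$, using a mapping cone construction at each step. For the base case $r=1$, a Pieri resolution ${\bf F}(\alpha; \beta^1)_\bullet$ exists (this is the content of the case $r=1$, which is treated later in the section, and which we are permitted to assume here for the purpose of this inductive procedure). For the inductive step, suppose we have constructed a resolution ${\bf F}'(\alpha; \beta^1, \ldots, \beta^{r-1})_\bullet$ of $M' = \coker\bigl(\bigoplus_{i=1}^{r-1} A(-|\beta^i/\alpha|) \otimes \Sc_{\beta^i} V \to A \otimes \Sc_\alpha V\bigr)$. The module $M$ is then the cokernel of the composite map $A(-|\beta^r/\alpha|) \otimes \Sc_{\beta^r} V \to A \otimes \Sc_\alpha V \twoheadrightarrow M'$, i.e.\ $M = M'/N$ where $N$ is the image of $\Sc_{\beta^r} V$ in $M'$. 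The strategy is to lift this map $A(-|\beta^r/\alpha|) \otimes \Sc_{\beta^r} V \to M'$ to a map of complexes from a Pieri resolution of $\coker\phi(\alpha, \beta^r)$ (which by the $r=1$ case is ${\bf F}(\alpha; \beta^r)_\bullet$, or rather the truncation of it past the zeroth term) into ${\bf F}'(\alpha; \beta^1, \ldots, \beta^{r-1})_\bullet$, and then take the mapping cone. The standard homological algebra of mapping cones gives that the cone resolves $M = \coker(M'' \to M')$ where $M'' = \coker\phi(\alpha,\beta^r)$; one checks this equals the desired $M$ by comparing images in $A \otimes \Sc_\alpha V$.

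Concretely, let $P_\bullet = {\bf F}(\alpha; \beta^r)_\bullet$ with $P_0 = A \otimes \Sc_\alpha V$ and let $\tilde P_\bullet$ denote the brutal truncation $\cdots \to P_2 \to P_1$ shifted so that $\tilde P_{-1} = A(-|\beta^r/\alpha|)\otimes \Sc_{\beta^r} V = P_1$ sits in homological degree $-1$; this is a resolution of the image $N' \subseteq A\otimes\Sc_\alpha V$ of the Pieri inclusion. Since $A \otimes \Sc_\alpha V = P_0$ surjects onto $M'$ via the augmentation of ${\bf F}'(\alpha;\beta^1,\ldots,\beta^{r-1})_\bullet$, and $P_1 \to P_0$ factors through this surjection into $M'$, we can lift $P_\bullet[1] \to M'[1]$ (where $P_\bullet[1]$ here means the truncated complex $\cdots \to P_2 \to P_1$ regarded as a resolution of $N'$, the submodule generated by the image of $\Sc_{\beta^r}V$) to an actual chain map $g_\bullet \colon P_{\bullet \ge 1} \to {\bf F}'(\alpha; \beta^1,\ldots,\beta^{r-1})_\bullet$ covering the composite $N' \hookrightarrow A\otimes\Sc_\alpha V \twoheadrightarrow M'$; the existence of such a lift is the usual comparison theorem for projective resolutions, using that the $P_i$ are free. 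The mapping cone of $g_\bullet$ is then a free resolution of $\coker(N' \to M') = (A\otimes \Sc_\alpha V)/(N' + (\text{image of }\beta^1,\ldots,\beta^{r-1})) = M$. Setting ${\bf F}'(\alpha;\beta)_\bullet := \mathrm{cone}(g_\bullet)$ completes the induction.

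For the length bound, I would track how the length grows under each mapping cone. The cone of $g_\bullet \colon P_{\bullet\ge 1} \to {\bf F}'(\alpha;\beta^1,\ldots,\beta^{r-1})_\bullet$ has length $\max\bigl(\mathrm{length}\,{\bf F}'(\alpha;\beta^1,\ldots,\beta^{r-1})_\bullet,\ \mathrm{length}\,P_{\bullet\ge1} + 1\bigr)$. By the $r=1$ case, the Pieri resolution ${\bf F}(\alpha;\beta^i)_\bullet$ has length at most $n+1-c_i$ where $c_i$ is the first column of $\beta^i/\alpha$ (this is the finite-length estimate built into the structure of critical boxes: critical boxes only occur in columns $c_i < j \le n$, so admissible subsets have at most $n - c_i$ elements, giving the homological degree bound $n+1-c_i$). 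Since $c = c_1 < c_2 < \cdots < c_r$ by the hypothesis $(\beta^i,\alpha)\in\VS$ with $\beta^i\not\subseteq\beta^j$ and the ordering $\beta^1 > \cdots > \beta^r$, every $c_i \ge c$, so every truncated Pieri resolution contributes length $\le (n+1-c_i) - 1 + 1 \le n+1-c$ — wait, one must be careful: the truncation $P_{\bullet\ge 1}$ has length one less than $P_\bullet$, but the cone shifts it back up by one, so the net contribution is again $\le n+1-c_i \le n+1-c$. By induction all the partial resolutions have length $\le n+1-c$, and hence so does ${\bf F}'(\alpha;\beta)_\bullet$.

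The main obstacle I expect is \emph{not} the formal mapping-cone argument (which is routine) but rather verifying cleanly that the cokernel of the cone is exactly the intended $M$ — i.e.\ that the submodule generated by all the $\Sc_{\beta^i} V$ really is the sum of the individual images, and that no collapsing or unexpected identification occurs when passing to the quotient $M'$. This requires knowing that the Pieri inclusion $\Sc_{\beta^r}V \to A \otimes \Sc_\alpha V$ composed with the surjection onto $M'$ is still controlled — in particular one uses that all maps in sight are $\GL(V)$-equivariant and the $\Sc_\lambda V$ are irreducible, so the image of $\Sc_{\beta^r}V$ in $M'$ is determined by which isotypic components it hits, which by Lemma~\ref{acycliclemma}-type reasoning (composition of Pieri maps being nonzero whenever the vertical-strip conditions are met) is exactly predicted. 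The other delicate point is making the length bound tight rather than off-by-one; this is handled by the bookkeeping above, noting that truncation and cone-shift cancel.
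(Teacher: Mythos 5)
There is a genuine gap, and it is in the step you flag as ``routine.'' You take $N' \subseteq A \otimes \Sc_\alpha V$ to be the image of the Pieri map for $\beta^r$, resolve it by the truncated Pieri resolution $P_{\bullet \ge 1}$, lift the composite $N' \hookrightarrow A \otimes \Sc_\alpha V \twoheadrightarrow M'$ to a chain map $g_\bullet$, and assert that $\mathrm{cone}(g_\bullet)$ resolves $\coker(N' \to M')$. But the cone of a lift of a module map $u \colon N' \to M'$ between two resolutions is acyclic in positive degrees only when $u$ is \emph{injective}: the long exact sequence gives $H_1(\mathrm{cone}) \cong \ker u$ (while $H_0(\mathrm{cone}) \cong \coker u$ always). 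Here $u$ is not injective in general, because the image of the $\beta^r$-map meets the images of the $\beta^i$-maps for $i < r$: since $A \otimes \Sc_\alpha V$ is multiplicity-free, the image of the $\beta^i$-map is $\bigoplus \Sc_\lambda V$ over $\lambda$ with $(\lambda,\alpha) \in \VS$, $(\lambda,\beta^i) \in \VS$ (Lemma~\ref{acycliclemma}), and any $\lambda$ containing $\beta^i \cup \beta^r$ with the appropriate vertical-strip conditions lies in both images. So $\ker(N' \to M') \ne 0$, your cone has nonzero $H_1$, and the construction does not produce a resolution. Note also that the delicate point you identify at the end (whether the cokernel of the cone is $M$) is not the problem --- $H_0$ of the cone is automatically the cokernel; the failure is acyclicity.

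This is exactly why the paper's argument is structured differently. It peels off the \emph{lex-largest} relation $\beta^1$, sets $N = \coker(\bigoplus_{i\ge 2}\beta^i \to \alpha)$, and takes $N'$ to be the \emph{kernel} of the surjection $N \to M$, so that $N' \hookrightarrow N$ is injective by construction and the mapping cone is acyclic. The price is that $N'$ is not (the truncation of) a single Pieri resolution: it is a new module of the same general type, generated by $\beta^1$ with relations $\beta^1 \cup \beta^i$ ($2 \le i \le r$) and the critical-box partitions $\beta^1(j)$, and resolving it requires a second induction. The induction that makes this terminate is on $n-c$: because $\beta^1$ is lex-largest, all relations of $N'$ start in columns strictly greater than $c$, so its resolution is available by the inductive hypothesis. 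Your single induction on $r$ cannot be repaired without introducing this structure --- resolving $\ker(N' \to M')$ (equivalently, the image of $\Sc_{\beta^r} V$ inside $M'$ rather than inside $A \otimes \Sc_\alpha V$) is again a problem with one generator and several relations, which is precisely what forces the paper's double induction. (Two smaller points: your claim $c_1 < c_2 < \cdots < c_r$ only holds when each $\beta^i/\alpha$ is a single column; in general one only has $c = c_1 \le c_i$, which is what the length bound needs. And the length bookkeeping itself would be fine if the cones were acyclic.)
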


The resolution ${\bf F}'(\alpha; \beta)_\bullet$ may not be minimal,
see Example~\ref{nonminimalexample}. But see
Corollary~\ref{singlecolumns} for a case when it will be minimal. In
general, it will contain ${\bf F}(\alpha; \beta)_\bullet$ as a
subcomplex. 

\begin{proof} We do a double induction, first on $n-c$, and secondly
  on $r$. The base case $n=c$ implies that $r=1$, in which case there
  is nothing to do.

  So suppose $n>c$ and $r>1$. We first note that
  \[
  M = \bigoplus_{\substack{ (\lambda, \alpha) \in \VS \\ (\lambda,
      \beta^i) \notin \VS,\ (1 \le i \le r)}} \Sc_\lambda V
  \]
  (as $\GL(V)$-representations) by Lemma~\ref{acycliclemma}. Define
  \begin{align} \label{Npresentation}
  N = \coker\big(\bigoplus_{i=2}^r \beta^i \to \alpha\big) =
  \bigoplus_{\substack{(\lambda, \alpha) \in \VS \\ (\lambda, \beta^i)
      \notin \VS,\ (2 \le i \le r)}} \Sc_\lambda V.
  \end{align}
  Both $N$ and $M$ are generated over $A$ by $\Sc_\alpha V$. Choosing
  an inclusion $\Sc_\alpha V \to M$, we get a surjection $N \to M$;
  let $N'$ be the kernel of this map. Then $N'$ is the direct sum of
  representations $\Sc_\lambda V$ corresponding to $\lambda$ such that
  $(\lambda, \alpha) \in \VS$, $(\lambda, \beta^1) \in \VS$, and
  $(\lambda, \beta^i) \notin \VS$ for $i=2,\dots,r$. We describe all
  $\lambda$ such that $(\lambda, \beta^1) \in \VS$ which do not appear
  in $N'$.

  If $(\lambda, \beta^1) \in \VS$ and $(\lambda, \alpha) \notin \VS$,
  then $(\lambda, \beta^1(j)) \in \VS$ for some $\{j\} \in \Ad(\alpha,
  \beta^1)$. If $(\lambda, \beta^1) \in \VS$ and $(\lambda, \beta^i)
  \in \VS$ for some $i \ge 2$, then in particular $\lambda \supseteq
  \beta^1 \cup \beta^i$, so $(\lambda, \beta^1 \cup \beta^i) \in
  \VS$. These are the only possibilities, and one can write 
  \[
    N' = \bigoplus_{\substack{ (\lambda, \beta^1) \in \VS \\ (\lambda,
        \beta^1(j)) \notin \VS,\ (j \in \Ad(\alpha, \beta^1)) \\
        (\lambda, \beta^1 \cup \beta^i) \notin \VS,\ (2 \le i \le r)}}
    \Sc_\lambda V = \coker\big( \bigoplus_{i=2}^r (\beta^1 \cup
    \beta^i) \oplus \bigoplus_{\{j\} \in \Ad(\alpha, \beta^1)}
    \beta^1(j) \to \beta^1\big).
  \]
  In fact, in the above presentation, we only need to take those
  partitions of
  \[
  \{\beta^1 \cup \beta^2, \dots, \beta^1 \cup \beta^r\} \cup
  \bigcup_{\{j\} \in \Ad(\alpha, \beta^1)} \beta^1(j)
  \]
  which are minimal with respect to inclusion. So let
  \begin{align} \label{N'presentation}
  N' = \coker\big( \bigoplus (\beta^1 \cup \beta^i) \oplus \bigoplus
  \beta^1(j) \to \beta^1\big)
  \end{align}
  be such a minimal presentation.

  For $N$, the number of relations is $r-1$, so the Pieri resolution
  has been constructed by induction on $r$. For $N'$, the first column
  index of any $(\beta^1 \cup \beta^i) / \beta^1$ or any $\beta^1(j) /
  \beta^1$ is strictly bigger than $c$ because $\beta^1$ is largest in
  lexicographic order, and by definition of critical boxes. Hence the
  Pieri resolution of $N'$ has been constructed by induction on
  $n-c$. Let $(P_\bullet, d)$ and $(P'_\bullet, d')$ be the associated
  Pieri resolutions for the presentation \eqref{Npresentation} of $N$
  and presentation \eqref{N'presentation} of $N'$,
  respectively. Extend the inclusion $f \colon N' \subseteq N$ to an
  equivariant map of resolutions $f_\bullet \colon P'_\bullet \to
  P_\bullet$, and let $F_\bullet$ be the mapping cone of $f_\bullet$.
  We have $F_0 = P_0 = \alpha$ and $F_i = P'_{i-1} \oplus P_i$ for
  $i>0$. The differentials of $F_\bullet$ are
  \[
  \left[ \begin{matrix} -d' & 0 \\ -f_i & d \end{matrix} \right]
  \colon P'_i \oplus P_{i+1} \to P'_{i-1} \oplus P_i,
  \]
  using the convention that $P'_{-1} = 0$, so it is clear that they
  are $\GL(V)$-equivariant. The Pieri resolution of $M$ is a direct
  summand of $F_\bullet$. Writing $P'_\bullet[-1]$ to mean $P'_i[-1] =
  P'_{i-1}$, we have a short exact sequence of chain complexes
  \[
  0 \to P_\bullet \to F_\bullet \to P'_\bullet[-1] \to 0
  \]
  whose long exact sequence of homology shows that $F_\bullet$ is
  acyclic.

  The claim about the length of the resolution follows by induction,
  which gives the easily checked fact that $P'_\bullet$ has length
  $\le n-c$.
\end{proof}


While general Pieri resolutions may have complicated inductive
constructions which involve many cancellations, there are some special
cases when the explicit description can be written down.

\begin{corollary} \label{singlecolumns} Let $\alpha$ and $\beta^1 >
  \cdots > \beta^r$ be partitions with at most $n$ columns such that
  $\beta^i / \alpha$ only contains boxes in the $c_i$th column. Using
  the definitions from Section~\ref{criticalboxessection}, define
  \begin{align*}
    {\bf F}(\alpha; \beta)_0 &= A \otimes \Sc_\alpha V,\\
    {\bf F}(\alpha; \beta)_i &= \bigoplus_{J \in \Ad(\alpha; \beta)_i}
    A(-| \beta(J) / \alpha |) \otimes \Sc_{\beta(J)} V \quad (1 \le i
    \le n+1-c_1).
  \end{align*}
  Then there exist differentials such that ${\bf F}(\alpha;
  \beta)_\bullet$ is a minimal free graded resolution of
  \[
  M = \coker \big( \bigoplus_{i=1}^r A(-|\beta^i / \alpha|) \otimes
  \Sc_{\beta^i} V \to A \otimes \Sc_\alpha V \big).
  \]
\end{corollary}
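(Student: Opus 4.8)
The plan is a double induction mirroring the proof of Theorem~\ref{r>1}: induct primarily on $n-c_1$ and secondarily on $r$. The only genuine base case is $n=c_1$, which forces $r=1$ and $\beta^1/\alpha$ equal to a single box in column $n$; then $C(\alpha,\beta^1)=\emptyset$, so the asserted complex is just the two-term complex $A(-1)\otimes\Sc_{\beta^1}V\to A\otimes\Sc_\alpha V$ induced by a Pieri inclusion, which is exact by the length bound of Theorem~\ref{r>1} (applied with $r=1$, where the mapping-cone construction there is trivial since the relevant kernel is free) and minimal because the shift $|\beta^1/\alpha|$ is positive.

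For the inductive step, suppose $n>c_1$. Exactly as in the proof of Theorem~\ref{r>1}, I would set $N=\coker(\bigoplus_{i=2}^r\beta^i\to\alpha)$, take the natural surjection $N\to M$, and let $N'$ be its kernel, so that $0\to N'\to N\to M\to 0$. That proof produces a minimal presentation of $N'$ whose relations $\gamma^1>\cdots>\gamma^s$ are the partitions minimal under inclusion among $\{\beta^1\cup\beta^i: 2\le i\le r\}\cup\{\beta^1(j): \{j\}\in\Ad(\alpha,\beta^1)\}$. Because each $\beta^i/\alpha$ sits in a single column $c_i$ (with $c_1<\cdots<c_r$), each $\gamma^m/\beta^1$ is again a single column, lying in a column $>c_1$. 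Hence $N$ and $N'$ again satisfy the hypotheses of the Corollary --- $N$ with one fewer relation, $N'$ with strictly larger first column index --- so by induction they have the stated explicit minimal free resolutions $P_\bullet={\bf F}(\alpha;\beta^2,\dots,\beta^r)_\bullet$ and $P'_\bullet={\bf F}(\beta^1;\gamma^1,\dots,\gamma^s)_\bullet$ (the latter with an overall degree shift by $|\beta^1/\alpha|$). Lifting $N'\subseteq N$ to an equivariant map $f_\bullet\colon P'_\bullet\to P_\bullet$ and forming the mapping cone $F_\bullet$, the proof of Theorem~\ref{r>1} shows $F_\bullet$ is a $\GL(V)$-equivariant graded free resolution of $M$, with $F_i=P'_{i-1}\oplus P_i$.

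The point special to the single-column case, which I would check next, is that $F_\bullet$ is \emph{already minimal}: $P_\bullet$ and $P'_\bullet$ are minimal by induction, and $f_\bullet$ has no unit entries, because every summand of $P'_\bullet$ has the form $A(-a)\otimes\Sc_\mu V$ with $\mu\supseteq\beta^1$, whereas every summand of $P_\bullet$ has $\mu_{c_1}=\alpha_{c_1}<\beta^1_{c_1}$ --- one verifies that the constructions producing the terms of $P_\bullet$ never force column $c_1$ to grow, the needed inequalities all coming from $(\beta^i,\alpha)\in\VS$. Hence $F_\bullet$ is the minimal free resolution of $M$, so as $\GL(V)$-representations
\[
\Tor^A_i(M,K)\;\cong\;{\bf F}(\alpha;\beta^2,\dots,\beta^r)_i\otimes K\;\oplus\;{\bf F}(\beta^1;\gamma^1,\dots,\gamma^s)_{i-1}\otimes K ,
\]
and matching this with the asserted closed form comes down to the purely combinatorial identity
\[
\Ad(\alpha;\beta^1,\dots,\beta^r)_i\;\cong\;\Ad(\alpha;\beta^2,\dots,\beta^r)_i\;\sqcup\;\Ad(\beta^1;\gamma^1,\dots,\gamma^s)_{i-1}
\]
compatible with $J\mapsto(\beta(J),|\beta(J)/\alpha|)$, where the degree shifts match since $|\beta^1/\alpha|+|\gamma(K)/\beta^1|=|\gamma(K)/\alpha|$.

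Establishing this identity is the main work, and I expect the ``$1\in I$'' part to be the chief obstacle. The admissible data with $1\notin I$ match $\Ad(\alpha;\beta^2,\dots,\beta^r)_i$ by simply deleting the index $1$; the irredundancy and $\beta$-minimality conditions transfer painlessly, since any partition equal to such a $\beta(J)$ is deficient in column $c_1$ (so it can only arise from data with $1\notin I$) and the partitions coming from ``$1\in I$'' data strictly dominate there, so the two families never compete. For the data with $1\in I$, I would send $K\in\Ad(\beta^1;\gamma)_{i-1}$ to the partition $\gamma(K)\supseteq\beta^1$ and then apply the canonical block decomposition of $\gamma(K)/\alpha$ described right after the definition of $\ol{\Ad}$ (first block: the column $c_1$ followed by a maximal run of critical columns; the remaining blocks read off likewise). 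Showing this is a well-defined bijection onto the ``$1\in I$'' part of $\Ad(\alpha;\beta)_i$ requires: that after union with $\beta^1$ the critical boxes of $(\beta^1,\gamma^m)$ agree with those of $(\alpha,\beta^i)$ and of $(\alpha,\beta^1)$ --- which uses that $\beta^1$ coincides with $\alpha$ outside column $c_1$; that the columns contributed by $K$ together with $c_1$ are exactly the columns of $\beta(J)/\alpha$, so homological degrees match ($(i-1)+1=i$); that minimality of the block count makes $K$ irredundant iff $J$ is; and that $\beta$-minimality is preserved --- all while checking that the ``smallest partition containing $\dots$'' constructions return honest partitions with the expected vertical-strip behaviour. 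Once the identity is in place the Corollary follows; one can note in passing that minimality of ${\bf F}(\alpha;\beta)_\bullet$ is in any event automatic from the irredundancy condition, since its differentials are $A$-linear combinations of Pieri inclusions composed with multiplications and a degree-$0$ entry would force $\beta(J')=\beta(J)$ with $J'\in\Ad_{i+1}$, $J\in\Ad_i$, which irredundancy forbids.
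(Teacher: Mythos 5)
Your plan is, in substance, the paper's own proof: the paper also runs the inductive mapping-cone procedure of Theorem~\ref{r>1}, with $N=\coker(\bigoplus_{i\ge 2}\beta^i\to\alpha)$ and $N'=\ker(N\to M)$ minimally presented by $\gamma^1=\beta^1(c_1+1)$ and $\gamma^j=\beta^1\cup\beta^j$ (discarding $\gamma^1$ when $c_2=c_1+1$), kills cancellation by exactly your observation that every generator over $N'$ contains $\beta^1$ and so has $c_1$-th entry exceeding $\alpha_{c_1}$ while no generator over $N$ does, and then spends the bulk of the argument establishing precisely the bijection $\Ad(\alpha;\beta)_k\cong\Ad(\alpha;\beta^-)_k\sqcup\Ad(\beta^1;\gamma)_{k-1}$ that you state (constructed first on $\ol{\Ad}$ and then restricted, split according to whether $1\in I$, with the same irredundancy and minimality checks you list); so the part you defer is the main work, but your outline of it matches the paper's and the checks do go through. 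The one genuine divergence is the base of the induction: the paper does not recurse the $r=1$ case down to column $n$, but instead identifies the single-column $r=1$ resolution with a truncation of the pure resolution of Theorem~\ref{equivariantres} (via the auxiliary partition $\tilde{\alpha}$), so it never needs a separate exactness statement there. Your version does need injectivity of the Pieri map when $\beta^1/\alpha$ lies in column $n$ (which, note, can be a whole vertical strip, not a single box), and citing the length bound of Theorem~\ref{r>1} for this is shaky bordering on circular, since that theorem's proof treats the $r=1$ resolutions as given building blocks; the clean justification is either the paper's reduction to Theorem~\ref{equivariantres} or a direct appeal to Lemma~\ref{acycliclemma}, using that $(\lambda,\beta^1)\in\VS$ forces $(\lambda,\alpha)\in\VS$ when the strip sits in the last column.
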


\begin{proof} We can use the inductive procedure of
  Theorem~\ref{r>1}. First, this description agrees with that of
  Theorem~\ref{equivariantres} in the case that $r=1$ and $c_1=1$,
  which is clear from the definition of admissible subsets. The case
  of general $c_1$ can also be deduced from
  Theorem~\ref{equivariantres}. In particular, suppose that $\beta /
  \alpha$ consists of $m$ boxes in the second column. Let
  $\tilde{\alpha} = (\alpha_2 + m -1, \alpha_2, \alpha_3, \dots,
  \alpha_n)$. Then ${\bf F}(\alpha; \beta)_i = {\bf F}(\tilde{\alpha};
  \alpha)_{i+1}$ for $i \ge 0$. In other words, the resolution is
  obtained by removing the first term of the resolution of Theorem
  \ref{equivariantres}. We leave it to the reader to formulate the
  easy generalization when ``second column'' is replaced by ``$c_1$th
  column.''

  Using the notation from the proof of Theorem~\ref{r>1}, $N$ is
  generated by $\alpha$ with relations $\beta^2, \dots, \beta^r$, so
  satisfies the inductive hypothesis. Also, $N'$ is generated by
  $\beta^1$ with relations $\gamma^1 = \beta^1(c_1+1), \gamma^2 =
  \beta^1 \cup \beta^2, \dots, \gamma^r = \beta^1 \cup \beta^r$. If
  $c_2 > c_1+1$, then these are minimal relations. Otherwise, if $c_2
  = c_1+1$, we throw out $\beta^1(c_1 + 1)$ since it contains $\beta^1
  \cup \beta^2$.  For notation, let $\beta^- = \{\beta^2, \dots,
  \beta^r\}$ so that we can write $\Ad(\alpha; \beta^-)$ in place of
  $\Ad(\alpha; \beta^2, \dots, \beta^r)$, etc.

  By induction, the minimal free resolution of $N'$ is described by
  the admissible sets $\Ad(\beta^1; \gamma)$. The partitions appearing
  in the $k$th term of the resolution of $M$ obtained from the mapping
  cone construction are the partitions of the form $\beta^-(J)$ or
  $\gamma(J')$ for $J \in \Ad(\alpha; \beta^-)_k$ or $J' \in
  \Ad(\beta^1; \gamma)_{k-1}$. So we only need to show that this set
  is equal to $\{\beta(J) \mid J \in \Ad(\alpha; \beta)_k\}$.

  Supposing that this has been done, no two partitions which appear in
  $N$ or $N'$ can be the same because the partitions $\lambda$ of $N'$
  have the property that $\lambda_{c_1} > \alpha_{c_1}$, while those
  of $N$ do not. Hence no cancellations occur, and the mapping cone of
  an equivariant chain map between the Pieri resolutions of $N'$ and
  $N$ lifting the inclusion $N' \subseteq N$ will be the Pieri
  resolution of $M$. This finishes the construction.

  To finish the proof, we establish the promised bijection. First we
  construct a bijection
  \[
  \ol{\phi} \colon \ol{\Ad}(\alpha; \beta)_k \xrightarrow{\cong}
  \ol{\Ad}(\alpha; \beta^-)_k \coprod \ol{\Ad}(\beta^1; \gamma)_{k-1}
  \quad (\text{disjoint union})
  \]
  for $k \ge 1$ such that $\beta(J) = \beta^-(\ol{\phi}(J))$ or
  $\beta(J) = \gamma(\ol{\phi}(J))$ depending on which set
  $\ol{\phi}(J)$ lives in. If $c_2 = c_1 + 1$ so that $\gamma^2,
  \dots, \gamma^r$ are minimal relations for $N'$, then we use subsets
  of $\{2,\dots,r\}$ to index admissible subsets of $\Ad(\beta^1;
  \gamma)$ and $\ol{\Ad}(\beta^1; \gamma)$ for consistency. Pick $J
  \in \ol{\Ad}(\alpha; \beta)$ and write $J = (I, (J_i)_{i \in
    I})$. If $1 \notin I$, then $J \in \ol{\Ad}(\alpha; \beta^-)_i$,
  and we set $\ol{\phi}(J)$ to be this copy of itself.

  If $1 \in I$ and $J_1 \ne \emptyset$, then we set $\ol{\phi}(J) =
  (I, (\ol{\phi}(J)_i)_{i \in I})$ where $\ol{\phi}(J)_1 = J \setminus
  \{c_1+1\}$. If $1 \in I$ and $J_1 = \emptyset$, we set $\ol{\phi}(J)
  = (I \setminus \{1\}, (\ol{\phi}(J)_i)_{i \in I \setminus
    \{1\}})$. In both cases, we set $\ol{\phi}(J)_i = J_i$ for $i>1$,
  so that $\ol{\phi}(J) \in \ol{\Ad}(\beta^1; \gamma)$.

  To check that $\ol{\phi}$ is well-defined, we need to know that
  given $\ol{\phi}(J)$ and $\ol{\phi}(J')$ with $s(\ol{\phi}(J)) <
  s(\ol{\phi}(J'))$ and $\beta^-(\ol{\phi}(J)) =
  \beta^-(\ol{\phi}(J'))$, we have $\beta(J) = \beta(J')$, and we also
  need to show a similar statement with $\beta^-$ replaced by
  $\gamma$. Both of these statements are immediate.

  Also $\ol{\phi}$ is injective by definition. To see that $\ol{\phi}$
  is surjective, we have to check that given $J = (I, (J_i)_{i \in
    I})$ and $J' = (I', (J'_i)_{i \in I'})$ with $\#I < \#I'$ and
  $\beta(J) = \beta(J')$, we have equality of the partitions
  associated with $\ol{\phi}(J)$ and $\ol{\phi}(J')$. If both $I$ and
  $I'$ contain 1 or if neither $I$ nor $I'$ contains 1, then this is
  clear. If only one of $I$ and $I'$ contains 1, we cannot have
  $\beta(J) = \beta(J')$ by our assumption that only $\beta^1 /
  \alpha$ contains boxes in the $c_1$th column.

  Therefore $\ol{\phi}$ has the desired properties. Now we show that
  $\ol{\phi}$ restricts to a bijection
  \[
  \phi \colon \Ad(\alpha; \beta)_k \xrightarrow{\cong} \Ad(\alpha;
  \beta^-)_k \coprod \Ad(\beta^1; \gamma)_{k-1}.
  \]
  First, if $\beta^-(\phi(J)) \subset \beta^-(\phi(J'))$, then we have
  $\beta(J) \subset \beta(J')$ and a similar statement holds when we
  replace $\beta^-$ with $\gamma$. This implies that $\phi$ is
  well-defined.

  Now we establish surjectivity of $\phi$. First pick $\ol{\phi}(J)
  \in \Ad(\alpha; \beta^-)_k$, we have to show that $J \in \Ad(\alpha;
  \beta)_k$. Suppose not, so that there exists $J' \in \Ad(\alpha;
  \beta)_k$ with $\beta(J') \subsetneqq \beta(J)$. Since $\beta(J) /
  \alpha$ contains no boxes in the $c_1$th column, the same is true
  for $\beta(J') / \alpha$, so we know that $\ol{\phi}(J') \in
  \ol{\Ad}(\alpha; \beta^-)_k$, and that $\beta^-(\ol{\phi}(J'))
  \subsetneqq \beta^-(\ol{\phi}(J))$, a contradiction. 

  Now pick $\ol{\phi}(J) \in \Ad(\beta^1; \gamma)_{k-1}$, we have to
  show that $J \in \Ad(\alpha; \beta)_k$. Again, suppose not so that
  there exists $J' \in \Ad(\alpha; \beta)_k$ such that $\beta(J')
  \subsetneqq \beta(J)$. Write $J = (I, (J_i)_{i \in I})$ and $J' =
  (I', (J'_i)_{i \in I'})$. We know that $1 \in I$. If $1 \in I'$,
  then $\ol{\phi}(J') \in \ol{\Ad}(\beta^1; \gamma)$, and we get a
  contradiction as before. So suppose that $1 \notin I'$. By the
  irredundancy condition in the definition of $\ol{\Ad}$, we have that
  $s(J)$ is the number of columns of $\beta(J) / \alpha$, and
  similarly for $J'$. Hence $s(J) > s(J')$, which is a
  contradiction. We conclude that $J \in \Ad(\alpha; \beta)_k$, so
  $\phi$ is surjective.  Injectivity of $\phi$ is immediate from
  injectivity of $\ol{\phi}$, so we have the desired bijection.
\end{proof}

Now we specialize to the case that $r=1$, and write $\beta =
\beta^1$. Define equivariant $A$-modules ${\bf F}(\alpha;
\beta)_\bullet$ as follows:
\begin{align*} 
  {\bf F}(\alpha; \beta )_0 &= A \otimes \Sc_\alpha V,\\
  {\bf F}(\alpha; \beta )_i &= \bigoplus_{J \in \Ad(\alpha; \beta)_i}
  A(-|\beta(J) / \alpha|) \otimes \Sc_{\beta(J)} V, \quad (0 < i \le
  n).
\end{align*}
Theorem~\ref{r=1} will show that one can choose equivariant
differentials so that ${\bf F}(\alpha; \beta)_\bullet$ becomes a
minimal free resolution. Note that ${\bf F}(\alpha; \beta)_1 =
A(-|\beta / \alpha|) \otimes \Sc_{\beta} V$, and that these modules
agree with those defined in Section~\ref{purefreesection} in the case
that $\beta/\alpha$ consists of just boxes in the first column.

\begin{remark}
  The definition of ${\bf F}(\alpha; \beta)_\bullet$ is natural from
  the following point of view. The idea is to consider which
  representations appear in the kernel of the map
  \[
  A(-|\beta/\alpha|) \otimes \Sc_{\beta} V \to A \otimes \Sc_\alpha V,
  \]
  i.e., those representations of highest weight $\lambda$ where
  $(\lambda, \alpha) \notin \VS$, and $(\lambda, \beta) \in \VS$. Then
  we find a minimal generating set of such representations, and then
  surject onto them using other representations. This explains the
  ${\bf F}(\alpha; \beta)_2$ term, and we continue in this way; the
  language of critical boxes and admissible sets is a convenient way
  to describe such minimal partitions.
\end{remark}

\begin{theorem} \label{r=1} Let $\alpha$ and $\beta$ be partitions
  with at most $n$ columns such that $\beta \supset \alpha$ and $\beta
  / \alpha$ is a vertical strip. Then there exist equivariant
  differentials ${\bf F}(\alpha; \beta)_{i+1} \to {\bf F}(\alpha;
  \beta)_i$ making ${\bf F}(\alpha; \beta)_\bullet$ a minimal free
  graded resolution of $M = \coker(\phi(\alpha, \beta))$. Furthermore,
  the length of ${\bf F}(\alpha; \beta)_\bullet$ is $n+1-c$, where $c$
  denotes the index of the first column of $\beta / \alpha$.
\end{theorem}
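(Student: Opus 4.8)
The plan is to deduce the $r=1$ case from Corollary~\ref{singlecolumns} by a reduction that replaces a single vertical strip $\beta/\alpha$ with a family of single-column strips. First I would set up the reduction: let $c = c_1 < c_2 < \cdots < c_m$ be the column indices in which $\beta/\alpha$ has a box, with $b_j$ boxes in column $c_j$, and for each $j$ let $\gamma^j$ be the partition obtained from $\alpha$ by stacking $b_j$ boxes onto column $c_j$, i.e. $\gamma^j_{c_j} = \alpha_{c_j} + b_j$ and $\gamma^j_i = \alpha_i$ otherwise. Because $\beta/\alpha$ is a vertical strip we have $\beta = \bigcup_j \gamma^j$, each $\gamma^j/\alpha$ is a single column, and (after reordering so $\gamma^1 > \cdots > \gamma^m$, which matches $c_1 < \cdots < c_m$) we have $\gamma^i \not\subseteq \gamma^j$ for $i \ne j$. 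Moreover $\phi(\alpha,\beta)$ factors through $\bigoplus_j A(-|\gamma^j/\alpha|) \otimes \Sc_{\gamma^j} V \to A \otimes \Sc_\alpha V$ up to the representations it kills, so by Lemma~\ref{acycliclemma} the cokernels agree: $\coker \phi(\alpha,\beta) \cong \coker\big(\bigoplus_j A(-|\gamma^j/\alpha|)\otimes \Sc_{\gamma^j}V \to A\otimes\Sc_\alpha V\big) = \bigoplus_{(\lambda,\alpha)\in\VS,\ (\lambda,\beta)\notin\VS} \Sc_\lambda V$, using the fact that $(\lambda,\gamma^j)\notin\VS$ for all $j$ is equivalent to $(\lambda,\beta)\notin\VS$ when $(\lambda,\alpha)\in\VS$.

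Next I would apply Corollary~\ref{singlecolumns} to the family $\gamma^1 > \cdots > \gamma^m$ to obtain the minimal free resolution ${\bf F}(\alpha;\gamma)_\bullet$, whose $i$th term is indexed by $\Ad(\alpha;\gamma)_i$. The crux is then a purely combinatorial identification: the set of admissible data $\Ad(\alpha;\gamma)$ from Section~\ref{criticalboxessection} (built from the multi-column family) coincides, via $J \mapsto \beta(J)$, with the admissible subsets $\Ad(\alpha;\beta)_i$ defined directly from the vertical strip $\beta/\alpha$ and its critical boxes $C(\alpha,\beta)$, matching grading shifts $|\beta(J)/\alpha| = |\gamma(J)/\alpha|$ and cohomological degrees $\#J + 1 = i$. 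The point is that choosing which columns $c_{j_1}, \dots, c_{j_t}$ among $c_1,\dots,c_m$ to ``turn on'' and then choosing an admissible run of critical boxes hanging off each is exactly the same bookkeeping as choosing a single admissible subset of $C(\alpha,\beta)$: each $\gamma^{j_k}$ contributes its own column plus a (possibly empty) consecutive run $\{c_{j_k}+1, \dots\}$ of critical boxes, and distinct choices give distinct partitions $\beta(J)$ by the irredundancy argument already established in Section~\ref{criticalboxessection}. Once this bijection is in place, ${\bf F}(\alpha;\gamma)_\bullet$ and ${\bf F}(\alpha;\beta)_\bullet$ have term-by-term identical modules, so the former transports to the latter, giving the required equivariant differentials and minimality (minimality being inherited since all generators of ${\bf F}(\alpha;\beta)_i$ sit in strictly positive degree for $i \ge 1$, as each $\beta(J) \supsetneq \alpha$, and the resolution produced by the corollary is minimal).

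Finally I would pin down the length. By Corollary~\ref{singlecolumns} the resolution ${\bf F}(\alpha;\gamma)_\bullet$ has length $\le n+1-c$ where $c = c_1$ is the first column of $\gamma^1/\alpha = $ first column of $\beta/\alpha$; to see equality, I would exhibit a nonzero top term: taking all critical boxes $\{c+1, c+2, \dots, n\}$ (a single admissible run) together with the column-$c$ data yields an admissible $J$ with $\#J + 1 = n+1-c$, so $\Ad(\alpha;\beta)_{n+1-c} \ne \emptyset$ and ${\bf F}(\alpha;\beta)_{n+1-c} \ne 0$. The main obstacle I expect is the combinatorial bijection in the second paragraph: one has to be careful that the notion of ``admissible'' for the multi-column family $\gamma$ (which involves the sets $S$, the irredundancy condition, and minimality with respect to inclusion as in Section~\ref{criticalboxessection}) really does collapse to the simpler single-strip notion $\Ad(\alpha;\beta)$, including checking that no redundant or non-minimal data survives on either side. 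Everything else is formal or already supplied by Lemma~\ref{acycliclemma} and Corollary~\ref{singlecolumns}.
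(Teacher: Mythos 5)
Your reduction breaks at the very first step: the claimed isomorphism $\coker \phi(\alpha,\beta) \cong \coker\bigl(\bigoplus_j A(-|\gamma^j/\alpha|)\otimes \Sc_{\gamma^j}V \to A\otimes\Sc_\alpha V\bigr)$ is false whenever $\beta/\alpha$ occupies more than one column. The point is that $(\lambda,\beta)\notin\VS$ is \emph{not} equivalent to ``$(\lambda,\gamma^j)\notin\VS$ for all $j$'': since each $\gamma^j\subseteq\beta$, the image of $A\otimes\Sc_\beta V$ is strictly contained in the sum of the images of the $A\otimes\Sc_{\gamma^j}V$, so the right-hand side is a proper quotient of $M$. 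Concretely, take $n=2$, $\alpha=(1,0)$, $\beta=(2,1)$, so $\gamma^1=(2,0)$, $\gamma^2=(1,1)$; then $\lambda=(2,0)$ satisfies $(\lambda,\alpha)\in\VS$ and $(\lambda,\beta)\notin\VS$, hence $\Sc_{(2,0)}V$ occurs in $M$, but $(\lambda,\gamma^1)\in\VS$, so it does not occur in your substitute cokernel. The same problem shows up already in the terms: the minimal presentation of $M$ has a single generator $\Sc_\beta V$ in degree $|\beta/\alpha|$ (compare the paper's example $\alpha=(5,3,1,0)$, $\beta=(6,4,1,0)$, where ${\bf F}_1$ is just $(6,4,1,0)$), whereas your ${\bf F}(\alpha;\gamma)_1$ has $m$ generators $\gamma^1,\dots,\gamma^m$ in smaller degrees; correspondingly the ``bijection'' $\Ad(\alpha;\gamma)_1\leftrightarrow\Ad(\alpha;\beta)_1$ you need already fails, since $\Ad(\alpha;\beta)_1$ is a singleton while $\Ad(\alpha;\gamma)_1$ has $m$ elements. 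So Corollary~\ref{singlecolumns} applied to the family $\gamma^1,\dots,\gamma^m$ resolves the wrong module, and no relabelling of index sets can repair this.

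The actual argument has to work harder. The paper proves Theorem~\ref{r=1} by induction on the number $b$ of columns of $\beta/\alpha$: it sets $\alpha'$ to be $\alpha$ with the first $b-1$ columns of the strip added, builds a short exact sequence $0\to N\to M\to N'\to 0$ in which $N'=\coker\phi(\alpha,\alpha')$ is handled by induction and $N$ (generated by $\Sc_{\alpha'}V$ with relations $\alpha'(i_j+1)$ and $\beta$) is handled by Corollary~\ref{singlecolumns}, splices the two resolutions, and then--crucially--shows the resulting complex is \emph{not} minimal: one must split off the subcomplex of generators not containing $\beta$ and verify, via an explicit bijection $\psi$ of admissible sets, that the quotient has exactly the terms ${\bf F}(\alpha;\beta)_\bullet$. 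Some genuine cancellation/minimization step of this kind is unavoidable, and it is entirely absent from your plan.
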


\begin{proof} First note that $M$ is the direct sum (as $\GL(V)$
  representations) of $\Sc_\lambda V$ over all $\lambda$ such that
  $(\lambda, \alpha) \in \VS$ and $(\lambda, \beta) \notin \VS$. This
  follows from Lemma~\ref{acycliclemma}. Write $\beta = \alpha +
  a_{i_1}e_{i_1} + \cdots + a_{i_{b}}e_{i_{b}}$ to mean that $\beta$
  is obtained from $\alpha$ by adding $a_{i_j}$ boxes in column
  $i_j$. We proceed by induction on $b$. The case $b=1$ follows from
  Corollary~\ref{r>1}.

  Now assume $b > 1$. Set $\alpha' = \alpha + a_{i_1}e_{i_1} + \cdots
  + a_{i_{b-1}}e_{i_{b-1}}$ and define
  \[
  N = \bigoplus_{\substack{(\lambda, \alpha') \in \VS\\ (\lambda,
      \beta) \notin \VS \\ (\lambda, \alpha'(i_j + 1)) \notin \VS,\ (1
      \le j \le b-1) }} \Sc_\lambda V,
  \]
  which is an $A$-submodule of $M$. For clarity, we will abuse
  notation and abbreviate $A(-|\beta(J) / \alpha|) \otimes
  \Sc_{\beta(J)} V$ by simply $\beta(J)$. The minimal free resolution
  $Q_\bullet$ of $N$ has been constructed in
  Corollary~\ref{r>1}. Namely, set $\gamma^j = \alpha'(i_j + 1)$ for
  $1 \le j \le b-1$ and $\gamma^b = \beta$. We have $\gamma^{b}
  \subset \gamma^{b-1}$ if and only if $i_b = i_{b-1} + 1$, and there
  are no other inclusions. Let $\gamma = \{\gamma^1, \dots,
  \gamma^{b-2}, \gamma^b\}$ in the first case, and $\gamma =
  \{\gamma^1, \dots, \gamma^b\}$ in the second case. Then $Q_k =
  \bigoplus_{J \in \Ad(\alpha'; \gamma)_k} \gamma(J)$, where we are
  adopting the shorthand above.

  The quotient $N' = M/N$ is the direct sum (as a representation of
  $\GL(V)$; $N'$ is not an $A$-submodule of $M$ in general)
  \[
  N' = \bigoplus_{\substack{(\lambda, \alpha) \in \VS\\ (\lambda,
      \alpha') \notin \VS}} \Sc_\lambda V,
  \]
  which is also the cokernel of a Pieri inclusion
  $A(-|\alpha'/\alpha|) \otimes \Sc_{\alpha'} V \to A \otimes
  \Sc_\alpha V$. The minimal free resolution $Q'_\bullet$ of $N'$ is
  \[
  0 \to \DS \bigoplus_{J \in \Ad(\alpha; \alpha')_{n-c+1}} \alpha'(J)
  \to \cdots \to \DS \bigoplus_{J \in \Ad(\alpha; \alpha')_2}
  \alpha'(J) \to \alpha' \to \alpha,
  \]
  which follows by induction since $\alpha' / \alpha$ has $b-1$
  columns. Using the short exact sequence
  \[
  0 \to N \to M \to N' \to 0,
  \]
  we can construct a $\GL(V)$-equivariant resolution
  $\tilde{P}_\bullet$ of $M$ whose terms are coordinatewise direct
  sums of the resolutions of $N$ and $N'$.
  So the partitions generating the module $\tilde{P}_k$ are
  \[
  \{\alpha'(J) \mid J \in \Ad(\alpha; \alpha')_k\} \coprod \{\gamma(J)
  \mid J \in \Ad(\alpha'; \gamma)_k\}.
  \]
  However, $\tilde{P}_\bullet$ is not a minimal resolution of $M$. In
  particular, any partition that appears as a generator of a module in
  the minimal resolution of $M$ (and which is not $\alpha)$ must
  contain $\beta$. The terms generated by partitions which do not
  contain $\beta$ form a subcomplex of $\tilde{P}_\bullet$. After a
  change of basis, we can show that the positive degree terms of this
  subcomplex (so excluding the degree 0 term generated by $\alpha$) is
  also a subcomplex. Let $P_\bullet$ be the quotient of
  $\tilde{P}_\bullet$ by this subcomplex. Then $P_\bullet$ contains as
  generators only $\alpha$ in degree 0 and those generators of
  $\tilde{P}_\bullet$ which contain $\beta$. We claim that $P_\bullet$
  is the minimal resolution of $M$.

  To show this, it is enough to show that the terms of $P_\bullet$
  agree with the terms of ${\bf F}(\alpha; \beta)_\bullet$ since the
  generating partitions of the latter are all distinct. So we will be
  finished if we can establish a bijection 
  \[
  \psi \colon \Ad(\alpha; \beta)_k \xrightarrow{\cong} \{ J \in
  \Ad(\alpha; \alpha')_k \mid \alpha'(J) \supseteq \beta \} \coprod
  \{J \in \Ad(\alpha'; \gamma)_k \mid \gamma(J) \supseteq \beta \}
  \]
  for all $k$. So pick $J \in \Ad(\alpha; \beta)_k$. We can uniquely
  write $J = \{i_{j_1} + 1, \dots, i_{j_1} + k_1\} \cup \cdots \cup \{
  i_{j_t} + 1, \dots, i_{j_t} + k_t\}$ so that $i_r + 1 \notin
  [i_{j_s} + 1, i_{j_s} + k_s]$ unless $r = s$. Then $k_1 + \cdots +
  k_t = k -1$. If $i_b \in J$, then $J \in \Ad(\alpha; \alpha')_k$ and
  $\alpha'(J) \supseteq \beta$, so we define $\psi(J)$ to be this copy
  of $J$. 

  Otherwise, if $i_b \notin J$, we first define $I = \{j_1, \dots,
  j_t, b\}$ and $\psi(J)_{j_s} = \{i_{j_s} + 2, \dots, i_{j_s} +
  k_s\}$ for $s=1,\dots, t-1$, and do the same for $s=t$ if $b \ne
  j_t$. If $b \ne j_t$, set $\psi(J)_b = \emptyset$, and if $b = j_t$,
  set $\psi(J)_b = \{i_{j_t} + 1, \dots, i_{j_t} + k_t\}$. Now set
  $\psi(J) = (I, (\psi(J)_i)_{i \in I})$, which we claim is an element
  of $\Ad(\alpha'; \gamma)_k$. To see that it is an element of
  $\ol{\Ad}(\alpha'; \gamma)_k$, it is enough to show that
  $\gamma(\psi(J)) / \alpha'$ has exactly $k$ columns. This is true
  because the column indices of $\gamma(\psi(J)) / \alpha'$ are
  precisely $J \cup \{i_b\}$. Now since $\gamma(\psi(J)) = \beta(J)$,
  if $J' \in \ol{\Ad}(\alpha'; \gamma)_k$ existed so that $\gamma(J')
  \subsetneqq \gamma(\psi(J))$, one could use $J'$ to find a
  corresponding $J'' \in \ol{\Ad}(\alpha; \beta)$ so that $\beta(J'')
  \subsetneqq \beta(J)$. The argument is similar to the one found in
  the proof of Corollary~\ref{r>1}, so we omit the details.

  The above establishes that $\psi$ is well-defined, and it is
  injective by construction. To finish, we show that $\psi$ is also
  surjective. First pick $J \in \Ad(\alpha; \alpha')_k$ such that
  $\alpha'(J) \supseteq \beta$. Then $i_b \in J$ and $J$ can also be
  considered an element of $\Ad(\alpha; \beta)_k$, so we're done in
  this case. Now pick $J \in \Ad(\alpha'; \gamma)_k$ and write $J =
  (I, (J_i)_{i \in I})$. By the irredundancy condition in the
  definition of $\Ad$, it follows that $J' = \{i_s \mid s \in I\} \cup
  \bigcup_{i \in I} J_i$ is a set of size $k$. So $J' \setminus \{i_b\}
  \in \Ad(\alpha; \beta)_k$ and $\psi(J' \setminus \{i_b\}) = J$.

  Finally, by definition the length of ${\bf F}(\alpha;
  \beta)_\bullet$ is the size $s(J)$ of the largest admissible set
  $J$. Every admissible set is a subset of $\{c+1, c+2, \dots, n\}$,
  and this set is admissible, so the second statement of the theorem
  follows.
\end{proof}

There is a natural family of resolutions ${\bf F}(\alpha;
\beta)_\bullet$ which are pure.

\begin{corollary} If $\beta/\alpha$ contains boxes only in the $i$th
  column and the $n$th column for some $i$, then the complex ${\bf
    F}(\alpha; \beta )_\bullet$ is pure.
\end{corollary}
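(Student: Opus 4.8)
The plan is to read off the terms of ${\bf F}(\alpha;\beta)_\bullet$ from Theorem~\ref{r=1} and to observe that, under the stated hypothesis, each homological degree of the complex carries at most one indecomposable free summand; purity is then immediate from the definition.

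First I would recall that by Theorem~\ref{r=1} one has ${\bf F}(\alpha;\beta)_0=A\otimes\Sc_\alpha V$ and, for $m\ge 1$,
\[
{\bf F}(\alpha;\beta)_m=\bigoplus_{J\in\Ad(\alpha;\beta)_m}A(-|\beta(J)/\alpha|)\otimes\Sc_{\beta(J)}V ,
\]
so the complex is pure exactly when, for every $m$, all admissible sets $J$ with $\#J+1=m$ yield the same value of $|\beta(J)/\alpha|$. Hence it suffices to analyze $\Ad(\alpha;\beta)$.

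Next I would pin down the admissible subsets. Write $c<n$ for the two column indices of the vertical strip $\beta/\alpha$ (if $\beta/\alpha$ has boxes only in column $n$, the resolution has length $1$ and there is nothing to check). By the definition of critical boxes, $C(\alpha,\beta)$ lies entirely in columns $c+1,\dots,n$, and an admissible subset is a union of ``runs'' $\{c_s+1,c_s+2,\dots,j\}$ with $c_s$ a column of $\beta/\alpha$. Since the only columns of $\beta/\alpha$ are $c$ and $n$, and a run beginning at $n+1$ is empty (there being only $n$ columns), every run is an interval $\{c+1,c+2,\dots,j\}$ with $c<j\le n$. All such intervals share the left endpoint $c+1$, so any union of them is again an interval of this shape; therefore
\[
\Ad(\alpha;\beta)=\{\emptyset\}\cup\bigl\{\,\{c+1,\dots,j\}\ :\ c<j\le n\,\bigr\},
\]
which contains exactly one set of each cardinality $0,1,\dots,n-c$.

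I would then conclude: $\#\Ad(\alpha;\beta)_m\le 1$ for every $m$, so each ${\bf F}(\alpha;\beta)_m$ is either $0$ or a free module on the single irreducible $\Sc_{\beta(J)}V$, hence generated in the single internal degree $|\beta(J)/\alpha|$. Thus for each $i$ the Betti number $\B(M)_{i,j}$ is nonzero for at most one value of $j$, which is precisely the statement that ${\bf F}(\alpha;\beta)_\bullet$ is pure. I do not expect a real obstacle here; the only points to state with a little care are the elementary fact that a union of intervals with a common left endpoint is an interval, and the degenerate case $c=n$.
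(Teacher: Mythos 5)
Your proof is correct and follows the same route the paper intends (the corollary is stated without proof as an immediate consequence of Theorem~\ref{r=1}): since the only column indices of $\beta/\alpha$ are $i$ and $n$, every nonempty admissible set is an interval $\{i+1,\dots,j\}$ with common left endpoint, so $\Ad(\alpha;\beta)_m$ has at most one element and each term of ${\bf F}(\alpha;\beta)_\bullet$ is generated in a single degree. Your handling of the degenerate case (boxes only in column $n$, giving a length-one resolution) is also fine.
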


\begin{rmk}[Pieri resolutions over $\bigwedge V$] \rm Following
  Remark~\ref{exteriorpurefree}, it is not hard to see how to
  construct Pieri resolutions over the exterior algebra $B$. The only
  thing that changes is that columns and rows are swapped in the
  notion of critical boxes, and the resulting resolution will have
  infinite length and is eventually linear.
\end{rmk}

\begin{remark} In principle, one can resolve a general finitely
  generated equivariant module $M$. Pick any cyclic $A$-submodule $N$
  of $M$. Then $M/N$ has less generators than $M$, and resolutions for
  $N$ and $M/N$ can be combined to give a resolution of $M$. We do
  not attempt to resolve cokernels of maps of the form
  \[
  \bigoplus_{i=1}^r A \otimes \Sc_{\beta^i} V \to \bigoplus_{j=1}^g A
  \otimes \Sc_{\alpha^j} V
  \]
  simply because the cokernel is not uniquely defined by the domain
  and codomain. It is not even enough to specify which maps $A \otimes
  \Sc_{\beta^i} V \to A \otimes \Sc_{\alpha^j} V$ are nonzero since
  for $r > 1$ and $g > 1$, the scalar multiples of the Olver maps used
  becomes important.
\end{remark}

\subsection{Examples.}\label{examplesection}

\begin{remark} \label{trivialmodule} Given an irreducible
  representation $\Sc_\alpha V$, we can think of it as a trivial
  $A$-module, i.e., its annihilator is the maximal ideal $(x_1, \dots,
  x_n)$. Then a minimal presentation of $\Sc_\alpha V$ is obtained as
  follows. Let $1 = i_1 < \cdots < i_r \le n$ denote the indices such
  that $\alpha_{i_j} < \alpha_{i_j-1}$ for $j=2,\dots,r$. Let $\beta^j
  = (\alpha_1, \dots, \alpha_{i_j}+1, \dots, \alpha_n)$, so that
  \[
  \Sc_\alpha V = \coker \big( \bigoplus_{i=1}^r A(-1) \otimes
  \Sc_{\beta^i} V \to A \otimes \Sc_\alpha V),
  \]
  and one can write down its minimal resolution using
  Corollary~\ref{singlecolumns}. 

  In particular, if $\alpha = (0, \dots, 0)$, then $\Sc_\alpha V = K$,
  and the Pieri resolution in this case is the Koszul complex. In
  general, this resolution is $\Sc_\alpha V$ tensored with the Koszul
  complex on all of the variables $x_1, \dots, x_n$.
\end{remark}

\begin{eg} Let $n = 4$, $\alpha = (5,3,1,0)$, and $\beta =
  (6,4,1,0)$. The corresponding Young diagram is
  \[
  \tableau[scY]{,,|,,\bl, \bl \times|,|,1, \bl \times||1,\bl \times}
  \]
  The critical boxes are marked with $\times$, and our Pieri
  resolution is
  \[
  0\to (6,6,4,2)\to (6,6,4,0) \oplus (6,4,4,2)\to (6,6,1,0) \oplus
  (6,4,4,0)\to (6,4,1,0)\to (5,3,1,0).
  \]
\end{eg}

\begin{eg} Here is an example of a pure resolution of new type: $n=4$,
  $\alpha = (5,3,1,0)$, $\beta = (6,3,1,1)$. The corresponding Young
  diagram is
  \[
  \tableau[scY]{,,,1 | , ,\bl , \bl \times| , | ,\bl ,\bl \times | |
    1,\bl \times}
  \]
  The critical boxes are marked with $\times$. The Pieri resolution
  ${\bf F}(\alpha; \beta)_\bullet$ we get is
  \[
  0\to (6,6,4,2)\to (6,6,4,1)\to (6,6,1,1)\to (6,3,1,1)\to (5,3,1,0).
  \]
  The length of the complex is 4, but the module it resolves has
  infinite length (the cokernel contains all highest weight modules
  for partitions $(5+d,3,1,0)$ for $d\ge 0$). Thus this module is not
  a maximal Cohen--Macaulay module.
\end{eg}

\begin{eg} \label{singlecolumnsexample} To illustrate
  Corollary~\ref{singlecolumns}, let $\alpha = (4,3,1,0)$, $\beta^1 =
  (6,3,1,0)$, and $\beta^2 = (4,3,3,0)$. Then the column indices of
  the admissible subsets of $C(\alpha, \beta^1)$ and $C(\alpha,
  \beta^2)$ are, respectively, $\{\{2\}$, $\{2,3\}$, $\{2,3,4\}\}$ and
  $\{4\}$. In homological degree 2, the candidates for syzygy
  generators are $\{ \beta^1(2), \beta^1 \cup \beta^2, \beta^2(4)\}$,
  in degree 3, they are $\{ \beta^1(2,3), \beta^1(2) \cup \beta^2,
  \beta^1 \cup \beta^2(4)\}$, and in degree 4, they are $\{
  \beta^1(2,3,4), \beta^1(2,3) \cup \beta^2, \beta^1(2) \cup
  \beta^2(4)\}$.

  Since $\beta^1(2,3) \cup \beta^2 = (6,5,4,0) = \beta^1(2,3)$, we
  remove it from our list of candidates in degree 4. Finally, we pick
  only those candidates which are minimal in their homological degree
  with respect to inclusion. The resolution is then
  \[
  0 \to (6,5,3,2) \to \begin{array}{c} (6,5,3,0)\\
    (6,3,3,2) \end{array} \to \begin{array}{c} (6,5,1,0)\\ (6,3,3,0)\\
    (4,3,3,2) \end{array} \to \begin{array}{c} (6,3,1,0)\\
    (4,3,3,0)\end{array} \to (4,3,1,0).
  \]
  Here we are stacking partitions as shorthand for the direct sum of
  the free $A$-modules generated by the corresponding representations.
\end{eg}

\begin{eg} \label{multiplicityexample} Here is an example to show that
  the representations that appear in a Pieri resolution may have
  nontrivial multiplicities. Let $n=3$, $\alpha = (3,1,0)$, $\beta^1 =
  (4,3,0)$, $\beta^2 = (3,3,1)$, and $\beta^3 = (4,2,1)$. The Pieri
  resolution is
  \[
  0 \to \begin{array}{c} (4,4,1) \\ (4,3,2) \end{array}
  \to \begin{array}{c} (4,4,0) \\ (4,3,1) \\ (4,3,1) \\ (4,2,2) \\
    (3,3,2) \end{array} \to \begin{array}{c} (4,3,0) \\
    (4,2,1) \\ (3,3,1) \end{array} \to (3,1,0).
  \]
  Again, we use the same shorthand from
  Example~\ref{singlecolumnsexample}. Note that the partition
  $(4,3,1)$ appears twice as a generator. This is reasonable: the
  representation $(4,3,1)$ appears only once in homological degree 0,
  but appears 3 times in homological degree 1. Also note that this
  resolution is pure (though not of a Cohen--Macaulay module).
\end{eg}

\begin{eg} \label{nonminimalexample} Here is an example to show that
  the mapping cone construction of Theorem~\ref{r>1} need not return a
  minimal resolution. Let $n=4$, $\alpha = (4,2,1,0)$, $\beta^1 =
  (5,3,1,0)$ and $\beta^2 = (5,2,2,0)$. Working out the induction, we
  get the following resolution
  \[
  0 \to \begin{array}{c} (5,5,3,2) \\ (5,5,2,2) \end{array} \to 
  \begin{array}{c} (5,5,3,0) \\ (5,5,2,2) \\ (5,5,2,0) \\
    (5,3,2,2) \end{array} \to 
  \begin{array}{c} (5,5,2,0) \\ (5,5,1,0) \\ (5,3,2,0) \\
    (5,2,2,2) \end{array} \to
  \begin{array}{c} (5,3,1,0) \\ (5,2,2,0) \end{array} \to
  (4,2,1,0).
  \]
  The actual calculation can be done in Macaulay 2 using the package
  {\tt PieriMaps} \cite{pierimaps}, and one gets the following graded
  Betti table:
  \[
  \begin{matrix}
    &0&1&2&3&4\\
    \text{total:}&140&520&600&269&60\\
    0:&140&.&.&.&.\\
    1:&.&520&300&.&.\\
    2:&.&.&300&45&.\\
    3:&.&.&.&224&.\\
    4:&.&.&.&.&60
  \end{matrix}.
  \]
  In particular, the differentials between the pair of representations
  $(5,5,2,2)$ and the pair of representations $(5,5,2,0)$ are
  isomorphisms. The minimal resolution of the cokernel is
  \[
  0 \to (5,5,3,2) \to
  \begin{array}{c} (5,5,3,0) \\ (5,3,2,2) \end{array} \to 
  \begin{array}{c} (5,5,1,0) \\ (5,3,2,0) \\ (5,2,2,2) \end{array} \to
  \begin{array}{c} (5,3,1,0) \\ (5,2,2,0) \end{array} \to
  (4,2,1,0).
  \]
\end{eg}

\begin{remark} For the sequence $e = (e_0, \ldots, e_n)$ (with $e_0
  =0$) we can produce $e_1$ pure complexes with shifts $e$
  (corresponding to choices of $i$ boxes in the first column and $e_1
  - i$ boxes in the last column, for $1\le i\le e_1$). For example,
  take $n=4$, $e = (0,3,4,2,1)$. There are three pure complexes in our
  family, corresponding to the Young diagrams
  \[
  \tableau[scY]{,,3,4 | ,2,3 | ,2 | ,2 | 1,2 | 1 | 1 } \quad
  \tableau[scY]{,,,1 | ,,3,4 | ,2,3 | ,2 | ,2 | 1,2 | 1} \quad
  \tableau[scY]{,,,1 | ,,,1 | ,,3,4 | ,2,3 | ,2 | ,2 | 1,2 }.
  \]
\end{remark}

\section{Equivariant resolutions for other classical
  groups.} \label{otherclassicalgroups}


In this section we generalize the resolutions from \cite{efw} to other
classical groups in the following way. We use the vector
representation $F$ of an orthogonal or a symplectic group. Denote by
$V_\lambda$ the irreducible representation of the corresponding
classical group whose highest weight is $\lambda$. The highest weights
in each case correspond to partitions with some restrictions (see
Table~\ref{weighttable} below). Consider the polynomial ring $A =
\Sym(F)$. For a pair of representations $V_\lambda$, $V_\mu$ such that
$V_\mu \subset V_\lambda \otimes \Sym^i F$ we can ask again about the
resolution of the cokernel $M$ of the Pieri map $V_\mu \otimes A(-i)
\to V_\lambda \otimes A$. Using the Pieri resolutions discussed in the
previous sections and some sheaf cohomology, we construct resolutions
for a certain quotient $N$ of this cokernel in the case that $|\mu| =
|\lambda| + i$ (see Remark~\ref{classicalpierirule}). The comments at
the end of Section~\ref{section:typeB} explain the relationship
between the resolutions of $M$ and $N$. More generally, we can also
study direct sums of Pieri maps. We consider the cases of odd
orthogonal, symplectic, and even orthogonal groups separately.

This section is independent of Section~\ref{equivariantbssection}, so
the reader not comfortable with the representation theory of classical
groups can skip this section without any loss of
continuity.

\subsection{Notation.}

Let $F$ be a $(2n+\tau)$-dimensional vector space over $K$ (where
$\tau \in \{0,1\}$) and let $\omega$ be a nondegenerate symplectic or
symmetric bilinear form such that we can find a basis $e_1, \dots,
e_{2n+\tau}$ for $F$ satisfying $1 = \omega(e_i, e_{2n+\tau+1-i}) =
\pm \omega(e_{2n+\tau+1-i}, e_i)$ (the sign depending on whether
$\omega$ is symmetric or skew-symmetric) for $i=1,\dots,n+\tau$, and
all other pairings are 0. Let $G$ be the subgroup of $\SL(F)$ which
preserves $\omega$. In order to be precise let us just list the cases:
\begin{compactenum}
\item Case $\mathrm{B}_n$: We have $\tau = 1$, $\omega$ is symmetric,
  $G = \SO(F) \cong \SO(2n+1)$.
\item Case $\mathrm{C}_n$: We have $\tau = 0$, $\omega$ is
  skew-symmetric, $G = \Sp(F) \cong \Sp(2n)$.
\item Case $\mathrm{D}_n$: We have $\tau = 0$, $\omega$ is symmetric,
  $G = \SO(F) \cong \SO(2n)$.
\end{compactenum}

We identify the weight lattice of $G$ with $\Z^n = \Z\langle \eps_1,
\dots, \eps_n \rangle$ equipped with the standard dot product. The
identification is crucial for our applications of
Theorem~\ref{bottstheorem}, so we make this more precise. Representing
elements of $G$ as matrices in the ordered basis $\{e_1, \dots,
e_{2n+\tau}\}$, we take our maximal torus $T$ to be the subgroup of
diagonal matrices, and our Borel subgroup $B$ to be the subgroup of
upper triangular matrices, so that we have a set of simple roots for
$G$. We identify $(\lambda_1, \dots, \lambda_n) \in \Z^n$ with the
character $\lambda \colon T \to K^*$ given by
\[
\operatorname{diag}(d_1, \dots, d_{2n+\tau}) \mapsto \prod_{i=1}^n
d_i^{\lambda_i} 
\]
To be completely explicit, we list the simple roots and the conditions
for a weight to be dominant under this identification in
Table~\ref{weighttable}. We choose this particular identification to
be compatible with the identification of weights for $\GL_n$. In
particular, whenever we have an $n$-tuple $\lambda$, the statement
that $\lambda$ is dominant for $\GL_n$ will mean that $\lambda$ is a
weakly decreasing sequence, and the statement that $\lambda$ is
dominant for any other classical group $G$ will mean that it satisfies
the appropriate condition according to Table~\ref{weighttable}.
\begin{table}[ht]
\caption{Roots and weights}
\label{weighttable}
\centering
\begin{tabular}{|c|c|c|c|}
  \hline
  & $\SO(2n+1)$ & $\Sp(2n)$ & $\SO(2n)$ \\
  \hline 
  \tiny{simple roots} & 
  \begin{tabular}{l}
    $\eps_1 - \eps_2, \eps_2 - \eps_3,$ \\ 
    $\dots, \eps_{n-1} - \eps_n, \eps_n$ \end{tabular} & 
  \begin{tabular}{l}
    $\eps_1 - \eps_2, \eps_2 - \eps_3,$ \\ 
    $\dots, \eps_{n-1} - \eps_n, 2\eps_n$ \end{tabular} & 
  \begin{tabular}{l}
    $\eps_1 - \eps_2, \eps_2 - \eps_3,$ \\ 
    $\dots, \eps_{n-1} - \eps_n, \eps_{n-1} + \eps_n$ \end{tabular}
  \\
  \hline
  \tiny{dominant weights} & $ \lambda_1 \ge \cdots \ge \lambda_n \ge 0 $ & 
  $\lambda_1 \ge \cdots \ge \lambda_n \ge 0$ & $\lambda_1 \ge
  \cdots \ge \lambda_{n-1} \ge |\lambda_n|$ \\
  \hline
  $\rho$ & $(\frac{2n-1}{2}, \frac{2n-3}{2}, \dots, \frac{1}{2})$ &
  $(n, n-1, \dots, 2, 1)$ & $(n-1, n-2, \dots, 1, 0)$ \\
  \hline
\end{tabular}
\end{table}

For $\lambda$ a dominant weight of $G$, the notation $V_\lambda$
denotes an irreducible representation of $G$ with highest weight
$\lambda$. For $\lambda = (k,0,\dots,0)$, we just write $V_k$. In the
case of the symplectic group, $V_k = \Sym^k F$, but this is false in
the orthogonal case: in fact, $V_k$ is the kernel of the contraction
map $\Sym^k F \to \Sym^{k-2} F$ given by $x_1 \cdots x_k \mapsto
\sum_{i < j} \omega(x_i, x_j) x_1 \cdots \hat{x_i} \cdots \hat{x_j}
\cdots x_k$.

\begin{remark} \label{classicalpierirule} To give the reader a sense
  of why resolving equivariant modules over orthogonal and symplectic
  groups might be more difficult than the case of the general linear
  group, we recall the corresponding analogues of Pieri's rule.

  Let $G$ be an orthogonal or symplectic group of rank $n$. The
  Newell--Littlewood rule (see \cite[\S 4]{king}) gives, under the
  assumption that $n \ge \ell(\lambda) + \ell(\mu)$, that $V_\lambda
  \otimes V_{\mu} = \bigoplus_\nu V_\nu^{\oplus N^\nu_{\lambda, \mu}}$
  where $N^\nu_{\lambda, \mu} = \sum_{\alpha, \beta, \gamma}
  c^\lambda_{\alpha, \beta} c^\mu_{\alpha, \gamma} c^\nu_{\beta,
    \gamma}$ and $c_{*,*}^*$ denotes the usual Littlewood--Richardson
  coefficient for the general linear group. 

  Now specialize to the case that $\mu = (k)$, so that $c^\mu_{\alpha,
    \gamma} \ne 0$ in the above sum only if $\alpha$ and $\gamma$ are
  vertical strips of sizes $l$ and $k-l$ for some $l \le k$.  Hence,
  if $\lambda$ is a partition such that $\lambda_n = 0$, then
  $V_\lambda \otimes V_k \cong \bigoplus_\mu V_\mu^{\oplus n_\mu}$
  where the sum is over all $\mu$ which can be obtained from $\lambda$
  by first removing a vertical strip of size $l \le k$, and then
  adding a horizontal strip of size $k-l$ to the result, and $n_\mu$
  is the number of different ways to obtain $\mu$ via this process.

  The above formulas can still be interpreted when $\lambda_n \ne 0$
  or $n < \ell(\lambda) + \ell(\mu)$. In this case, one needs to use
  certain modification rules to rewrite $V_\nu$ where $\ell(\nu) > n$
  as $V_{\eta}$ where $\ell(\eta) \le n$. See \cite[\S 3]{king} and
  \cite[\S\S 2.4--2.5]{youngdiagram} for more details.

  Hence it is not clear how to give a combinatorial description of the
  terms in an equivariant resolution since in general, the same
  representation may appear as a generator for several different
  syzygy modules. Furthermore, the Pieri rule for orthogonal and
  symplectic groups is not multiplicity free. $\blacksquare$
\end{remark}

Let $X' = \{V \in \mathbf{Gr}(n, F) \mid \omega|_V = 0\}$ be the
Grassmannian of Lagrangian subspaces of $F$. Let $x_0 \in X'$ be the
point representing the subspace $\langle e_1, \dots, e_n \rangle$, and
let $X$ be the connected component of $X'$ which contains $x_0$. Then
$X$ is a homogeneous space for $G$, and we can identify $X$ with $G/P$
where $P$ is the parabolic subgroup which stabilizes $x_0$. On $X$, we
have the tautological subbundle $\RR$ of the trivial bundle $F \times
X$ defined by $\RR = \{(x, W) \mid x \in W\}$, and also the
tautological quotient bundle $\ol{\QQ} = (F \times X) / \RR$. Let
$\RR^\vee$ denote the orthogonal complement of $\RR$ with respect to
$\omega$, and define $\QQ = (F \times X) / \RR^\vee$. Since we have
perfect pairings $\omega \colon \RR \times \QQ \to K \times X$ and
$\omega \colon \RR^\vee \times \ol{\QQ} \to K \times X$, the form on
$F$ gives identifications $\QQ^* = \RR$ and $\ol{\QQ}^* =
\RR^\vee$. When $F$ is even-dimensional, $\RR = \RR^\vee$, and $\QQ =
\ol{\QQ}$.

We shall not make a distinction between vector bundles and locally
free sheaves, so that $\RR, \QQ, \dots$ will be a sheaf when we want
to calculate its cohomology, and will be a vector bundle when we need
to work with its total space.

There is an equivalence between homogeneous bundles over $X = G/P$ and
rational representations of $P$ defined by sending a homogeneous
bundle $\EE$ to its fiber over the point $x_0$, which represents the
coset $P$. In the other direction, given a rational $P$-module $U$, we
define a homogeneous bundle
\[
G \times^P U = G \times U / \{(g,u) \sim (gp, p^{-1}u)\ (p \in P)\}
\]
with $G$ acting on the first factor by multiplication on the left, and
the structure map $G \times^P U \to G/P$ is given by $(g,u) \mapsto
gP$. Under this equivalence, $\RR$ is associated with an isotropic
subspace $R \subset F$, and $\QQ$ to the quotient $Q =
F/R^\vee$. Hence, $\Sc_\lambda \QQ$ is a homogeneous bundle over $X$,
and we will need to know something about its cohomology.

Let $W$ be the Weyl group of $G$, and let $\ell(\sigma)$ denote the
length of $\sigma \in W$. We define a dotted action of $W$ on the
weights of $G$ by $\sigma^\bullet(\lambda) = \sigma(\lambda + \rho) -
\rho$, where $\rho$ is given in Table~\ref{weighttable}.

\begin{theorem} \label{bottstheorem} With the notation above, one of
  two mutually exclusive cases occurs:
  \begin{compactenum}
  \item There exists $\sigma \in W$ such that $\sigma^\bullet(\alpha)
    = \alpha$. In this case, $\Hs^i(G/P; \Sc_\alpha \QQ) = 0$ for all
    $i$.
  \item There exists a unique $\sigma \in W$ such that $\beta =
    \sigma^\bullet(\alpha)$ is a dominant weight for $G$. Then
    \[
    \Hs^i(G/P; \Sc_\alpha \QQ) = \begin{cases} V^*_\beta &
      \text{if } i = \ell(\sigma),\\
      0 & \text{otherwise.}
      \end{cases}
    \]
    \end{compactenum}
\end{theorem}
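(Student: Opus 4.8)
The statement is the Borel--Weil--Bott theorem for the bundle $\Sc_\alpha\QQ$, and the plan is to reduce it to the classical Bott theorem on the full flag variety $G/B$. The first step is to recognize $\Sc_\alpha\QQ$ as the homogeneous bundle attached to a representation of $P$. Since $P$ is the maximal parabolic stabilizing the Lagrangian subspace $R\subset F$, its Levi factor is $L\cong\GL(R)$, and $\omega$ identifies the fiber $Q$ of $\QQ$ over $x_0$ with $R^*$ (recall $\QQ^*=\RR$); moreover the unipotent radical of $P$ acts trivially on $Q$. Thus $\Sc_\alpha Q$ is an irreducible $L$-module, and one records its highest weight $\mu$ with respect to $T\subset B$, which is dominant for $L$ but not in general for $G$. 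The precise form of $\mu$ in terms of $\alpha$ depends on the identification of the weight lattice with $\Z^n$ fixed just before Table~\ref{weighttable}, which is exactly why that identification is spelled out so carefully there.

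Next I would introduce the projection $\pi\colon G/B\to G/P$, whose fibers are copies of $P/B$, the full flag variety of $L$. Let $\LL_\mu$ be the $G$-equivariant line bundle on $G/B$ associated to $\mu$, normalized so that on a fiber $P/B$ it has a one-dimensional space of sections equal to the highest-weight line of $\Sc_\alpha Q$. Because $\mu$ is $L$-dominant, Borel--Weil for $L$ applied fiberwise gives $\pi_*\LL_\mu\cong\Sc_\alpha\QQ$ and $R^i\pi_*\LL_\mu=0$ for $i>0$. Hence the Leray spectral sequence of $\pi$ degenerates and $\Hs^i(G/P;\Sc_\alpha\QQ)\cong\Hs^i(G/B;\LL_\mu)$ for all $i$.

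Now apply the classical Bott theorem on $G/B$, which itself is proved by descending through a chain of $\P^1$-fibrations $G/B\to G/P_s$ indexed by simple reflections $s$ and computing cohomology of line bundles on $\P^1$. Put $\gamma=\mu+\rho$ with $\rho$ as in Table~\ref{weighttable}. If $\gamma$ is fixed by some non-identity element of $W$ --- equivalently $\gamma$ is singular, i.e.\ lies on a wall --- then $\Hs^\bullet(G/B;\LL_\mu)=0$, which is the first case. Otherwise $\gamma$ is regular, there is a unique $\sigma\in W$ with $\sigma(\gamma)$ strictly dominant, and $\beta:=\sigma^\bullet(\mu)=\sigma(\gamma)-\rho$ is a dominant weight of $G$ with $\Hs^i(G/B;\LL_\mu)=V^*_\beta$ for $i=\ell(\sigma)$ and $0$ otherwise. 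Translating the dotted action back through the weight-lattice identification --- keeping track of the dualizations ($\QQ^*=\RR$, and $V_\beta$ versus $V^*_\beta$) and of any longest-element twists --- one verifies that the dotted $W$-orbit appearing here is the one attached to $\alpha$ itself, which yields precisely the two stated cases.

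The conceptual structure above is routine; the real work, and the main obstacle, is the convention-chasing: computing $\mu$ explicitly from $\alpha$ in each Lie type, checking $L$-dominance and the vanishing $R^{>0}\pi_*\LL_\mu=0$, and verifying that $\sigma^\bullet$ acting on $\mu$ matches $\sigma^\bullet$ acting on $\alpha$ in the normalization of Table~\ref{weighttable}. I would carry this out uniformly by appealing to the treatment of Bott's theorem for the classical groups in \cite[Chapter 4]{weyman}, with separate attention to type $\mathrm{D}_n$, where the $|\lambda_n|$ in the dominance condition and the fact that $-1\in W$ only when $n$ is even force a small additional case analysis.
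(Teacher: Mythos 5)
Your outline is correct and is essentially the same route the paper takes: the paper's proof of Theorem~\ref{bottstheorem} consists of citing the Borel--Weil--Bott theorem (\cite[\S II.5]{jantzen}) together with its specialization to this $G/P$ in \cite[\S 4.3]{weyman} (with the erratum that $V_\alpha$ there should be $V^*_\alpha$), which is precisely the standard reduction through $G/B$ plus the convention-chasing that you likewise defer to \cite{weyman}. The one delicate point you rightly flag --- tracking the dualizations coming from $\QQ^* = \RR$ so that $V^*_\beta$ rather than $V_\beta$ appears --- is exactly the bookkeeping issue the paper singles out in its erratum remark.
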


\begin{proof} This theorem is a special case of the Borel--Weil--Bott
  theorem for the homogeneous space $G/P$. See \cite[\S II.5]{jantzen}
  for a proof of the Borel--Weil--Bott theorem, and see \cite[\S
  4.3]{weyman} for more details regarding our special case. It should
  be mentioned that in \cite[\S 4.3]{weyman} there is an error: the
  $V_\alpha$ should be $V^*_\alpha$ in Theorem 4.3.1, and similarly
  for the other results in that section.
\end{proof}

\begin{remark} \label{dualweights} In types ${\rm B}_n$ and ${\rm
    C}_n$, all representations are self-dual, so that $V^*_\beta \cong
  V_\beta$. The same is true for type ${\rm D}_n$ when $n$ is
  even. However, when $n$ is odd, the dual of a representation of type
  ${\rm D}_n$ with highest weight $(\beta_1, \dots, \beta_{n-1},
  \beta_n)$ has highest weight $(\beta_1, \dots, \beta_{n-1},
  -\beta_n)$.
\end{remark}

\subsection{General setup.}

Now define $\BB = \Sym(\QQ)$. Let $\alpha$ and $\beta^1, \dots,
\beta^r$ be partitions with at most $n$ parts such that
$\beta^i/\alpha$ is a vertical strip for $i=1,\dots,r$. Pieri's rule
extends to the setting of vector bundles, so the sheaf $\Sc_\alpha \QQ
\otimes \BB$ contains $\Sc_{\beta^i} \QQ$ as a direct summand with
multiplicity one, and hence we have a unique (up to nonzero scalar)
$\GL(F)$-equivariant morphism of sheaves
\[
\bigoplus_{i=1}^r \BB \otimes \Sc_{\beta^i} \QQ \to \BB \otimes
\Sc_\alpha \QQ.
\]
We can resolve the cokernel of this map via the relative version of
the minimal Pieri resolution, ${\bf F}_\bullet := {\bf F}(\alpha;
\beta)_\bullet$ defined in Section~\ref{equivariantressection} (see
the remarks before Theorem~\ref{r>1}), obtained by substituting $\QQ$
for $V$.

By Theorem~\ref{bottstheorem}, the sheaves that appear in ${\bf
  F}_\bullet$ do not have any higher cohomology, so taking sections,
we get a $G$-equivariant acyclic complex
\begin{align} \label{sheafpieri} 0 \to \Hs^0(X; {\bf F}_n) \to \cdots
  \to \bigoplus_{i=1}^r \Hs^0(X; \BB \otimes \Sc_{\beta^i} \QQ) \to
  \Hs^0(X; \BB \otimes \Sc_\alpha \QQ) \to M(\alpha; \beta) \to 0,
\end{align}
where $M(\alpha; \beta)$ is by definition the cokernel of the map
preceding it. Letting $p \colon F \times X \to X$ denote the
projection onto the second factor, one has $p_*(\OO_\RR) = \BB$ by
\cite[Proposition 5.1.1(b)]{weyman}, so 
\[
\Hs^0(X; \BB \otimes \Sc_\lambda \QQ) \cong \Hs^0(F \times X; \OO_\RR
\otimes p^*(\Sc_\lambda \QQ))
\]
by the projection formula. Define $A = \Sym(F)$ to be the symmetric
algebra of $F$. Since $\OO_\RR$ is a quotient of $\OO_{F \times X}$,
the above groups inherit the structure of graded $A$-modules. If we
set the generator of $M(\alpha; \beta)$ to have degree 0, then the
module $\Hs^0(X; \BB \otimes \Sc_\lambda \QQ)$ in \eqref{sheafpieri}
needs to be shifted by degree $-|\lambda/\alpha|$ in order for the
differentials to be degree 0. However, these terms are not in general
free $A$-modules. So first we find free resolutions for each $\Hs^0(X;
\BB \otimes \Sc_\lambda \QQ)$ as $A$-modules. The maps in
\eqref{sheafpieri} will induce maps between these resolutions, and we
can put these resolutions together to get a free resolution of
$M(\alpha; \beta)$ as an $A$-module by taking an iterated mapping
cone.

Motivated by this, we call modules of the form $\Hs^0(X; \BB \otimes
\Sc_\lambda \QQ)$ {\bf geometric modules}, and let
$\mathbf{G}(\lambda)_\bullet$ denote their minimal free resolutions
over $A$. In order to calculate $\mathbf{G}(\lambda)_\bullet$, we will
need the following result.

\begin{theorem} \label{geometrictechnique} Let $Y$ be a projective
  variety over $K$ and $\mathcal{V}$ be a vector bundle over $Y$. Let
  $\EE = K^N \times Y$ denote the trivial vector bundle of rank $N$
  over $Y$, and let $\mathcal{S} \subset \EE$ be a subbundle with
  quotient bundle $\TT = \EE / \mathcal{S}$. Letting $A$ be the
  coordinate ring of $K^N$, there exists a complex
  $\mathbf{F}(\mathcal{V})_\bullet$ of free $A$-modules with minimal
  differentials of degree 0 whose terms are given by
  \[
  \mathbf{F}(\mathcal{V})_i = \bigoplus_{j \ge 0} \Hs^j(Y;
  \bigwedge^{i+j} (\TT^*) \otimes_{\OO_Y} \mathcal{V}) \otimes_K
  A(-i-j) \quad (i \in \Z),
  \]
  and whose homology groups are concentrated in degrees $i \le 0$,
  given by
  \[
  \Hs_i(\mathbf{F}(\mathcal{V})_\bullet) \cong \Hs^{-i}(Y;
  \Sym(\mathcal{S}^*) \otimes \mathcal{V}).
  \]
\end{theorem}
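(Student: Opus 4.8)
The plan is to realize $\mathbf{F}(\mathcal{V})_\bullet$ as the hyperdirect image (pushforward to a point) of a Koszul-type resolution on the total space of $\EE$. First I would consider the total space $\mathbb{E} = \Spec_{\OO_Y}(\Sym(\TT^*))$ — equivalently, $\mathbb{E}$ sits inside $K^N \times Y$ as the subbundle $\mathcal{S}$ — and on $K^N \times Y$ form the Koszul complex $\bigwedge^\bullet(\mathcal{S}^\vee) \otimes \OO_{K^N \times Y}$ resolving $\OO_{\mathbb{E}}$, where $\mathcal{S}^\vee = \EE^*/\TT^* \subset \EE^*$ is the conormal bundle of the inclusion. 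Pulling back, twisting by $p^*\mathcal{V}$ (with $p \colon K^N \times Y \to Y$), and using $\bigwedge^i \EE^* \cong \OO(-i) \otimes_K A$ after identifying $\EE^* = K^N \times Y$, the Koszul complex filters so that its associated graded pieces are $\bigwedge^j(\TT^*) \otimes \bigwedge^{i-j}(\EE^*) \otimes \mathcal{V}$. The key computation is then to push this down along $p$: since $\Hs^\bullet(Y; -)$ is the only source of higher cohomology (the $A$-factor and the $\bigwedge \EE^*$ factors are trivial in the fiber direction), applying $\Hs^\bullet(Y; -)$ term by term and reindexing by total cohomological degree gives exactly the claimed formula for $\mathbf{F}(\mathcal{V})_i$.

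The homology identification comes from the hypercohomology spectral sequence. Since $\bigwedge^\bullet(\mathcal{S}^\vee) \otimes p^*\mathcal{V}$ is quasi-isomorphic to $\OO_{\mathbb{E}} \otimes p^*\mathcal{V}$ as a complex of sheaves on $K^N \times Y$, and the pushforward of $\OO_{\mathbb{E}}$ to $Y$ is $\Sym(\mathcal{S}^*)$ (the coordinate ring of $\mathbb{E}$ over $Y$, using $\mathbb{E} = \Spec_{\OO_Y}\Sym(\mathcal{S}^*)$ — note the bundle on $K^N \times Y$ whose symmetric algebra cuts out $\mathbb{E}$ is $\mathcal{S}^\vee$, dual to $\mathcal{S}$ inside $\EE^*$, but the functions on the total space of $\mathcal{S}$ are $\Sym(\mathcal{S}^*)$), the derived pushforward $Rp_*(\OO_{\mathbb{E}} \otimes p^*\mathcal{V})$ has cohomology sheaves $\Hs^j(Y; \Sym(\mathcal{S}^*) \otimes \mathcal{V})$ placed in degree $j$. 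Translating to homological grading (our complex $\mathbf{F}(\mathcal{V})_\bullet$ is homologically graded, so cohomological degree $j$ becomes homological degree $-j$) gives $\Hs_i(\mathbf{F}(\mathcal{V})_\bullet) \cong \Hs^{-i}(Y; \Sym(\mathcal{S}^*) \otimes \mathcal{V})$, concentrated in $i \le 0$.

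Finally, minimality of the differentials: I would argue that after constructing the complex above, one can choose a representative with differentials whose matrix entries lie in the maximal ideal of $A$. This is standard — any complex of free modules over a graded local ring can be split off its trivial summands to produce a minimal one with the same homology — but here one wants to preserve the grading and the explicit cohomological description of the terms, so I would invoke the fact that the construction via the Koszul complex already produces differentials of degree $0$ that strictly raise the $j$-weight or involve connecting maps in cohomology, none of which contributes a scalar (degree-preserving, $j$-preserving) component; alternatively cite the standard reference (\cite[Theorem 5.1.2]{weyman}, whose hypotheses this theorem essentially reproduces) for the minimality claim. I expect the main obstacle to be bookkeeping the two gradings simultaneously: the internal degree coming from the $A(-i-j)$ shifts must be matched against the homological degree and the cohomological degree $j$ of $Y$, and getting the signs and the direction of the spectral sequence arrows correct (so that homology really lands in nonpositive homological degrees) is the delicate point, though conceptually it is forced once one fixes the convention that the Koszul differential lowers homological degree.
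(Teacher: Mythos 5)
Your fallback option is in fact the paper's entire proof: the statement is \cite[Theorem 5.1.2]{weyman} almost verbatim, and the paper simply cites it, so "cite the standard reference" agrees with the paper, and your Koszul-plus-pushforward outline is indeed the strategy behind that reference. As a self-contained argument, however, the sketch has a setup error and a genuine gap. The setup error: the conormal bundle of $Z=\operatorname{tot}(\mathcal{S})$ inside $K^N\times Y$ is (the pullback of) $\TT^*$, i.e.\ the annihilator of $\mathcal{S}$ in $\EE^*$, not $\EE^*/\TT^*$ (which is $\mathcal{S}^*$). The Koszul resolution of $\OO_Z\otimes p^*\mathcal{V}$ over $\OO_{K^N\times Y}=\OO_Y\otimes_K A$ has terms $\bigwedge^a(\TT^*)\otimes\mathcal{V}\otimes_K A(-a)$; the "associated graded pieces $\bigwedge^j(\TT^*)\otimes\bigwedge^{i-j}(\EE^*)\otimes\mathcal{V}$" you invoke do not occur in it, and they are not what produces the twist $A(-i-j)$ in the final answer.

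The gap: "applying $\Hs^\bullet(Y;-)$ term by term and reindexing" does not by itself yield a complex with the stated terms --- identifying the terms of the minimal representative of the derived pushforward is the entire content of the theorem. Concretely, the summand $\Hs^j(Y;\bigwedge^{i+j}\TT^*\otimes\mathcal{V})\otimes A(-i-j)$ of $\mathbf{F}(\mathcal{V})_i$ and the summand $\Hs^{j+1}(Y;\bigwedge^{i+j}\TT^*\otimes\mathcal{V})\otimes A(-i-j)$ of $\mathbf{F}(\mathcal{V})_{i-1}$ carry the same twist, so a constant (degree-$0$ entry) component of the differential between them is not excluded by any degree bookkeeping, and a priori such pairs cancel when one minimalizes; your claim that the construction "produces differentials that strictly raise the $j$-weight, none of which contributes a scalar component" is precisely what must be proved. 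The standard way to close this is a hyper-Tor/graded Nakayama argument: represent the hypercohomology of the Koszul resolution by the total complex $C_\bullet$ of a \v{C}ech (or injective) double complex, a complex of graded free $A$-modules whose differential is the sum of the Koszul part (entries in the maximal ideal) and the \v{C}ech part. The terms of the minimal free complex quasi-isomorphic to $C_\bullet$ are computed by $H_i(C_\bullet\otimes_A K)$; after $\otimes_A K$ only the \v{C}ech differential survives, and in internal degree $d=i+j$ the homology in homological degree $i$ is exactly $\Hs^{j}(Y;\bigwedge^{i+j}\TT^*\otimes\mathcal{V})$, which gives the stated terms and shows all differentials of the minimal complex have entries of positive degree. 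Your identification of the homology with $\Hs^{-i}(Y;\Sym(\mathcal{S}^*)\otimes\mathcal{V})$ is fine once one notes $Z$ is affine over $Y$, but the pushforward should be taken along $K^N\times Y\to K^N$ (equivalently, global sections as graded $A$-modules), not along $p$.
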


\begin{proof} See \cite[Theorem 5.1.2]{weyman}. \end{proof}

For our application, we take $Y = X$, $\mathcal{V} = \Sc_\lambda \QQ$,
$N = \dim F$, and $\mathcal{S} = \RR$. By Theorem~\ref{bottstheorem},
the complex $\mathbf{F}(\mathcal{V})_\bullet$ is exact in degrees $i
\ne 0$, so we get a free resolution of $\Hs^0(X; \BB \otimes
\Sc_\lambda \QQ)$ over $A$. In fact, this resolution is
$G$-equivariant (see \cite[Theorem 5.4.1]{weyman}). So we have reduced
the problem to calculating the cohomology groups $\Hs^j(X;
\bigwedge^{i+j} (\RR^\vee) \otimes \Sc_\lambda \QQ)$. We will do this
individually for the special orthogonal and symplectic groups in the
following subsections.

Now we can state the main result of this section.

\begin{theorem} The iterated mapping cone of the complexes
  $\mathbf{G}(\lambda)_\bullet$ resolving the terms in
  \eqref{sheafpieri} is a resolution of $M(\alpha; \beta)$. It is
  minimal in type {\rm C}. It is minimal in types {\rm B} and {\rm D}
  if there are no linear differentials in \eqref{sheafpieri}.
\end{theorem}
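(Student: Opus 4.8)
The plan is to produce the resolution of $M(\alpha;\beta)$ as an iterated mapping cone of the already-known minimal resolutions $\mathbf{G}(\lambda)_\bullet$, and then to read off minimality from the internal degrees in which the various $\mathbf{G}(\lambda)_\bullet$ are generated, the key input being the cohomology computations carried out type by type in the subsections below.

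\emph{Constructing the resolution.} Rewrite \eqref{sheafpieri} as an acyclic complex $0\to D_N\to\cdots\to D_1\to D_0\to M(\alpha;\beta)\to 0$, where $D_i$ is the direct sum of the modules $\Hs^0(X;\BB\otimes\Sc_\lambda\QQ)(-|\lambda/\alpha|)$ over the $\Sc_\lambda\QQ$ occurring in $\mathbf{F}_i$; acyclicity is exactly the assertion recorded before Theorem~\ref{geometrictechnique}, that the sheaves in $\mathbf{F}_\bullet$ have no higher cohomology, so that $\Hs^0(X;-)$ is exact on $\mathbf{F}_\bullet$. By Theorem~\ref{geometrictechnique} and Theorem~\ref{bottstheorem}, the correspondingly shifted direct sum of the $\mathbf{G}(\lambda)_\bullet$ is a graded free $A$-resolution of $D_i$. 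Choose, for each differential $D_{i+1}\to D_i$, a $G$-equivariant lift to a chain map between these resolutions (possible since $G$ is linearly reductive, so every step of the standard lifting argument is equivariant), and form the iterated mapping cone. That the result is a $G$-equivariant graded free resolution of $M(\alpha;\beta)$ then follows by the usual induction on $N$ from the long exact homology sequence of a mapping cone: with $Z=\ker(D_0\to M(\alpha;\beta))$, the complex $D_N\to\cdots\to D_1$ is acyclic over $Z$, so by the inductive hypothesis its iterated mapping cone resolves $Z$, and one further cone with $\mathbf{G}(D_0)_\bullet$ along a lift of $Z\hookrightarrow D_0$ resolves $\coker(Z\to D_0)=M(\alpha;\beta)$. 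This step is routine.

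\emph{Minimality as a degree count.} Let $\mathfrak{m}$ be the homogeneous maximal ideal of $A=\Sym(F)$; a bounded complex of graded free $A$-modules resolving a module is its minimal free resolution precisely when its differential vanishes modulo $\mathfrak{m}$. In the iterated mapping cone the diagonal blocks are the differentials of the minimal complexes $\mathbf{G}(\lambda)_\bullet$ and hence vanish modulo $\mathfrak{m}$, so minimality amounts to the vanishing modulo $\mathfrak{m}$ of every off-diagonal block; each such block is a degree-$0$ map of graded free modules going, up to a homological shift, from $\mathbf{G}(\lambda)_\bullet$ to $\mathbf{G}(\mu)_\bullet$ for a pair with $\Sc_\mu\QQ$ occurring below $\Sc_\lambda\QQ$ in $\mathbf{F}_\bullet$, in particular with $\mu\subseteq\lambda$ and $|\mu/\alpha|<|\lambda/\alpha|$. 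By Theorem~\ref{geometrictechnique}, a minimal generator of $\mathbf{G}(\lambda)_i$ has internal degree $i+j+|\lambda/\alpha|$ for some $j$ with $\Hs^{j}(X;\bigwedge^{i+j}(\RR^\vee)\otimes\Sc_\lambda\QQ)\ne 0$, and likewise for $\mu$. Matching the internal degree of a source generator with that of a target generator, a short computation shows that a unit entry can occur in such a block only if $\Hs^{j'}(X;\bigwedge^{k}(\RR^\vee)\otimes\Sc_\mu\QQ)\ne 0$ for some $j'\ge 1$ with $k\ge j'$, and that in the remaining case $j'=k=1$ the block must in fact sit over a \emph{linear} differential of \eqref{sheafpieri}, that is, one whose matrix entries are linear forms.

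\emph{The cohomology input, and the main difficulty.} It remains to feed in the cohomology of the bundles $\bigwedge^{k}(\RR^\vee)\otimes\Sc_\mu\QQ$ on the Lagrangian Grassmannian $X$, for the partitions $\mu$ occurring in $\mathbf{F}_\bullet$, computed via Theorem~\ref{bottstheorem} in the subsections below. In type C these cohomology groups are concentrated in cohomological degree $0$, so there is no $\Hs^{j'}$ with $j'\ge 1$; by the previous paragraph no off-diagonal block carries a unit entry and the iterated mapping cone is automatically minimal. In types B and D the only surviving higher cohomology is of the form $\Hs^{1}(X;\RR^\vee\otimes\Sc_\mu\QQ)$, that is, the case $j'=k=1$, so a unit entry can appear only over a linear differential of \eqref{sheafpieri}; hence the iterated mapping cone is minimal whenever there are none. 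The crux of the argument is therefore exactly this cohomology vanishing, which I expect to be the main obstacle: these are intricate computations with the dotted Weyl-group action of Theorem~\ref{bottstheorem}, considerably more delicate in type B than in types C and D, where $\rho$ has half-integral entries, modification rules for highest weights of length exceeding $n$ come into play, and the passage from the bundles $\bigwedge^{k}\RR$ to the bundles $\bigwedge^{k}\RR^\vee$ forces an analysis of the connecting homomorphism in the long exact cohomology sequence of $0\to\bigwedge^{k}\RR\to\bigwedge^{k}\RR^\vee\to\bigwedge^{k-1}\RR\otimes(\RR^\vee/\RR)\to 0$ (twisted by $\Sc_\mu\QQ$) in order to determine exactly which higher cohomology remains.
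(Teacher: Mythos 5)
Your construction of the resolution (pushing \eqref{sheafpieri} down to an acyclic complex of geometric modules, resolving each term by $\mathbf{G}(\lambda)_\bullet$ via Theorems~\ref{geometrictechnique} and \ref{bottstheorem}, lifting the maps equivariantly and taking an iterated mapping cone) is exactly what the paper does, and that part is fine. The gap is in the minimality argument. You reduce minimality to the absence of unit entries in the off-diagonal blocks and claim a ``short computation'' matching generator degrees settles it; but the generator degrees of $\mathbf{G}(\lambda)_i$ and $\mathbf{G}(\mu)_{i'}$ only constrain \emph{where} a degree-zero entry could sit, and for the higher blocks of the iterated mapping cone (those connecting $\mathbf{G}(\lambda)_\bullet$ and $\mathbf{G}(\mu)_\bullet$ at homological distance $t\ge 2$ in \eqref{sheafpieri}) the count does not exclude them: a unit entry from an $\Hs^0$-generator to an $\Hs^1$-generator needs only $|\lambda|-|\mu|=t$, which is not tied to a linear differential once $t\ge 2$, and $\lambda,\mu$ need not be joined by a chain of nonzero maps, so you cannot simply quote minimality of $\mathbf{F}_\bullet$ either. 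These higher blocks are homotopies built recursively from compositions of lower blocks, and controlling them is precisely why the paper proves the technical Lemma~\ref{filtration}: one filters each $\mathbf{G}(\lambda)_\bullet$ by its $\Hs^0$-part and must verify the \emph{stronger} conditions that every component from an $\Hs^1$-term to an $\Hs^0$-term has degree at least $2$ (not merely $\ge 1$) and that every component from an $\Hs^0$-term to an $\Hs^1$-term vanishes identically; the latter is exactly where equivariance, the explicit formulas \eqref{typeB1}--\eqref{typeB3}, \eqref{typeD1}--\eqref{typeD2}, and the hypothesis that \eqref{sheafpieri} has no linear differentials are used. Your proposal is silent on this inductive control, which is the actual content of the minimality claim beyond type C.

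There is also a concrete factual error in your bookkeeping: in types B and D the surviving higher cohomology is $\Hs^1(X;\Sc_\mu\QQ\otimes\bigwedge^{k}\RR^\vee)$ for $k\ge 2$ (with $\mu_n=0$, resp.\ $\mu_{n-1}=\mu_n=0$), while $\Hs^1(X;\Sc_\mu\QQ\otimes\RR^\vee)=0$; so the case you single out, ``$j'=k=1$'', never occurs, and your dichotomy would then yield unconditional minimality in types B and D, contradicting the need for the no-linear-differentials hypothesis. The correct statement for adjacent blocks ($t=1$) is that a unit entry into an $\Hs^1$-generator forces $|\lambda|-|\mu|=1$, i.e.\ a linear differential, but this comes from the degree shift $i+j$ in Theorem~\ref{geometrictechnique}, not from $k=1$, and for $t\ge 2$ you still need the argument of Lemma~\ref{filtration} (or the paper's equivariant degree bound showing such a block would lift a map of degree $\le 2-t\le 1$, hence zero). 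You do correctly identify the cohomology computations, including the connecting-homomorphism analysis of $0\to\bigwedge^i\RR\to\bigwedge^i\RR^\vee\to\bigwedge^{i-1}\RR\to 0$ in type B (Lemma~\ref{deltaiso}), as the essential input, but the passage from those computations to minimality of the iterated mapping cone needs the filtration lemma, not just a degree match on generators.
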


In order to prove this, we shall make use of the following technical
lemma.

\begin{lemma} \label{filtration} Suppose $\mathbf{F}_\bullet \to M \to
  0$ is a minimal acyclic complex of graded $A$-modules (i.e., the
  differentials have positive degree). Let $\mathbf{F}^i_\bullet \to
  \mathbf{F}_i$ be a minimal free resolution over $A$ with
  differentials denoted $\partial^i$ for each $i$, so that we have
  induced differentials $d \colon \mathbf{F}^i_\bullet \to
  \mathbf{F}^{i-1}_\bullet$. Suppose further that each
  $\mathbf{F}^i_\bullet$ has a filtration of subcomplexes $0 =
  \mathbf{F}^i_\bullet[-1] \subseteq \mathbf{F}^i_\bullet[0] \subseteq
  \mathbf{F}^i_\bullet[1] \subseteq \cdots \subseteq
  \mathbf{F}^i_\bullet$ such that:
  \begin{compactenum}[\rm (a)]
  \item \label{filtration1} Each homogeneous component of
    $\mathbf{F}_\bullet^i$ either intersects the $j$th graded part of
    the filtration in zero, or is entirely contained in it.
  \item \label{filtration2} For a homogeneous element $x$, let
    $\grade(x)$ be the number $g$ such that $x \in {\bf
      F}_\bullet^i[g] \setminus {\bf F}_\bullet^i[g-1]$. Then whenever
    $x \in \mathbf{F}^i_\bullet$ is homogeneous such that
    $\partial^i(x) \ne 0$, then $\deg(\partial^i(x)) - \deg(x) =
    \grade(x) - \grade(\partial^i(x)) + 1$.
  \item \label{filtration3} The induced maps $d \colon {\bf
      F}_\bullet^i \to {\bf F}_\bullet^{i-1}$ satisfy the inequality
    $\deg(d(x)) - \deg(x) \ge \grade(x) - \grade(d(x)) + 1$
    whenever $x$ is homogeneous, and that $\grade(D(x)) \le \grade(x)$
    for all differentials $D$ in the iterated mapping cone of the
    ${\bf F}^i_\bullet$.
  \end{compactenum}
  Then the iterated mapping cone of the $\mathbf{F}^i_\bullet$ forms a
  minimal resolution of $M$.
\end{lemma}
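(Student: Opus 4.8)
The plan is to show that the iterated mapping cone $\mathbf{G}_\bullet$ of the resolutions $\mathbf{F}^i_\bullet$ is acyclic with the right homology, and then that all its differentials have strictly positive degree (minimality). Acyclicity is essentially automatic: one has short exact sequences of complexes relating successive mapping cones to the $\mathbf{F}^i_\bullet$, and chasing the associated long exact sequences of homology (using that each $\mathbf{F}^i_\bullet$ resolves $\mathbf{F}_i$ and that $\mathbf{F}_\bullet \to M \to 0$ is acyclic) gives $\Hs_0(\mathbf{G}_\bullet) \cong M$ and $\Hs_k(\mathbf{G}_\bullet) = 0$ for $k > 0$. This is the standard iterated-mapping-cone argument and I would only sketch it; the real content of the lemma is minimality.

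For minimality, I would argue as follows. The differential $D$ on the iterated mapping cone is block upper-triangular: on the summand $\mathbf{F}^i_\bullet$ it restricts to $\pm\partial^i$ (the ``internal'' part), and it has ``off-diagonal'' components landing in $\mathbf{F}^{i-1}_\bullet, \mathbf{F}^{i-2}_\bullet, \dots$ built from $d$ and its higher homotopies. For a homogeneous $x$ with $D(x) \ne 0$, write $D(x) = \partial^i(x) + (\text{lower terms})$. By hypothesis \eqref{filtration2}, the internal part $\partial^i(x)$, when nonzero, satisfies $\deg(\partial^i(x)) - \deg(x) = \grade(x) - \grade(\partial^i(x)) + 1 \ge 1$ since $\grade(\partial^i(x)) \le \grade(x)$ (the filtrations are compatible under $\partial^i$ by \eqref{filtration1}, \eqref{filtration2}). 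By hypothesis \eqref{filtration3}, every off-diagonal component $y$ of $D(x)$ satisfies $\deg(y) - \deg(x) \ge \grade(x) - \grade(y) + 1 \ge 1$, again using $\grade(y) \le \grade(x)$. Hence every nonzero homogeneous component of $D(x)$ has degree strictly greater than $\deg(x)$, so $D$ has positive degree, which is exactly the condition for the resolution to be minimal.

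The main obstacle I anticipate is the bookkeeping for the off-diagonal differentials in the iterated (not just single) mapping cone: these are not simply the chain maps $d$, but compositions of $d$ with the chosen contracting homotopies of the $\mathbf{F}^i_\bullet$, so one must verify that condition \eqref{filtration3} is genuinely preserved under these homotopies, i.e., that the homotopies do not decrease $\grade$ more than the degree bound allows. I would handle this by checking that a minimal homotopy on a minimal resolution can be taken to be ``filtration-compatible'' in the sense that it does not increase $\grade$, so that each composition appearing in an iterated mapping cone still obeys the inequality in \eqref{filtration3} (which is precisely why that hypothesis is stated for \emph{all} differentials $D$ in the iterated mapping cone, not just the first-order maps $d$). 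Once that compatibility is in hand, the degree inequality propagates through the compositions additively and the positivity conclusion follows as above.
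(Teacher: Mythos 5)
Your opening reduction---that it suffices to show every component of the total differential is either zero or of positive degree---is the same first move as the paper (which, like you, treats acyclicity of the iterated mapping cone as standard), and your treatment of the diagonal components $\partial^i$ and of the first off-diagonal components $d$ is fine. The gap is in the higher components. Hypothesis (c) does not say what your second paragraph uses: its degree inequality is stated only for the induced chain maps $d \colon \mathbf{F}^i_\bullet \to \mathbf{F}^{i-1}_\bullet$, while for the differentials $D \colon \mathbf{F}^i_k \to \mathbf{F}^{i-j}_{k-1+j}$ with $j \ge 2$ only the grade inequality $\grade(D(x)) \le \grade(x)$ is assumed. So the assertion that ``every off-diagonal component $y$ of $D(x)$ satisfies $\deg(y)-\deg(x) \ge \grade(x)-\grade(y)+1$ by (c)'' is not a hypothesis; it is the main thing to be proved, and your third paragraph repeats the misreading (``that hypothesis is stated for all differentials $D$'') when only the grade half of (c) is stated in that generality.

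Your proposed repair---choose filtration-compatible homotopies and let the inequality ``propagate additively''---is not carried out and, as stated, does not close the gap: a contracting homotopy lowers internal degree, so composing two maps that each raise degree by at least grade-drop $+\,1$ with a homotopy does not preserve the inequality unless you know exactly how much the homotopy lowers degree; controlling its effect on $\grade$ alone is not enough. That exact control is precisely what the equality (not merely an inequality) in hypothesis (b) provides, and it is the engine of the paper's argument: one proves $\deg(D(x))-\deg(x) \ge \grade(x)-\grade(D(x))+1$ by double induction on $j$ and $k$, using that a higher differential $D$ arises as a lift satisfying $\partial^{i-j}D = d_1 d_2$ for lower-order differentials $d_1, d_2$. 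Adding the two inductive inequalities gives degree-drop $\ge$ grade-drop $+\,2$ for $d_1 d_2$, and subtracting the exact relation (b) for $\partial^{i-j}$ evaluated at $D(x)$ (with (a) used to identify $\grade(d_1 d_2(x))$ with $\grade(\partial^{i-j}D(x))$) yields the ``$+1$'' inequality for $D$ itself; positivity then follows from the assumed $\grade(D(x)) \le \grade(x)$. If you recast your homotopy argument so that the degree of the lifted element is computed from (b) applied to that element, rather than from an unproved compatibility property of a chosen homotopy, it becomes the paper's induction; without that step the minimality claim for $j \ge 2$ is unproved.
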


\begin{proof} The differentials in the mapping cone have the form $D
  \colon \mathbf{F}^i_k \to \mathbf{F}^{i-j}_{k-1+j}$. It is enough to
  prove that each such map is either 0, or has positive degree. More
  specifically, we will show by double induction on $j$ and $k$ that
  \begin{align} \label{finequality}
    \deg(D(x)) - \deg(x) \ge \grade(x) - \grade(D(x)) + 1
  \end{align}
  whenever $D(x) \ne 0$.

  The case $j = 0$ is the content of \eqref{filtration2} and the case
  $j = 1$ is the content of \eqref{filtration3}. Now suppose $j>1$. In
  general, if a nonzero differential $D \colon \mathbf{F}^i_k \to
  \mathbf{F}^{i-j}_{k-1+j}$ exists, then it was induced from a diagram
  \[
  \xymatrix{ \mathbf{F}^{i-j}_{k-2+j} & \ar[l]_-{d_1}
    \mathbf{F}^{i-\ell}_{k-1+\ell} &
    \ar[l]_-{d_2} \mathbf{F}^i_k \ar@{-->}[lld]^-D \\
    \mathbf{F}^{i-j}_{k-1+j} \ar[u]^-{\partial^{i-j}} }
  \]
  for some $\ell$ with $0 < \ell < j$. So each of $d_1$, $d_2$, and
  $\partial^{i-j}$ satisfies \eqref{finequality} by induction on
  $j$. Now we have for $x \in \mathbf{F}^i_k$
  \begin{align*}
    & (\deg(D(x)) - \deg(x)) + (\deg(\partial^{i-j}D(x)) - \deg(D(x)))\\
    =& (\deg(d_2(x)) - \deg(x)) + (\deg(d_1d_2(x)) - \deg(d_2(x)))\\
    \ge& (\grade(x) - \grade(d_2(x)) + 1) + (\grade(d_2(x)) -
    \grade(d_1d_2(x)) + 1).
  \end{align*}
  By (\ref{filtration2}), we have that $\deg(\partial^{i-j}D(x)) -
  \deg(D(x)) = \grade(D(x)) - \grade(\partial^{i-j}D(x)) + 1$, and
  using (\ref{filtration1}), we have $\grade(d_1d_2(x)) =
  \grade(\partial^{i-j}D(x))$. So we conclude that
  \[
  \deg(D(x)) - \deg(x) \ge \grade(x) - \grade(D(x)) + 1,
  \]
  as desired.
\end{proof}

To apply this lemma to our situation, we let
$\mathbf{G}(\lambda)_\bullet[0]$ be the subcomplex of
$\mathbf{G}(\lambda)_\bullet$ consisting of the $\Hs^0$ terms, and
$\mathbf{G}(\lambda)_\bullet[1] = \mathbf{G}(\lambda)_\bullet$. We
will show in each case that $\Hs^i$ terms are 0 for $i>1$. The fact
that the $\Hs^0$ terms form a subcomplex follows from minimality of
the complex in Theorem~\ref{geometrictechnique}. Also,
(\ref{filtration1}) and (\ref{filtration2}) follow from the grading
given by Theorem~\ref{geometrictechnique}, so in each case we will
only need to verify that (\ref{filtration3}) holds. For $x$
homogeneous coming from an $\Hs^1$ term, we have to show that
$\deg(d(x)) - \deg(x) \ge 2$ if $d(x)$ lies in an $\Hs^0$ term to
verify (\ref{filtration3}). Furthermore, if $x$ is homogeneous and
comes from an $\Hs^0$ term and $D$ is a differential in the iterated
mapping cone, we have to show that the components of $D(x)$ coming
from $\Hs^1$ terms are 0. In all other cases, we only need to show
that $\deg(d(x)) - \deg(x) \ge 1$ to verify (\ref{filtration3}).

\subsection{Type $\mathrm{B}_n$: Odd orthogonal
  groups.} \label{section:typeB} 

\begin{theorem} Let $G$ be of type $\mathrm{B}_n$. If $\lambda_n > 0$,
  then 
\begin{align} \label{typeB1}
  \Hs^0(X; \Sc_\lambda \QQ \otimes \bigwedge^i \RR^\vee) =
  \bigoplus_{\substack{\mu \subseteq \lambda \\ |\lambda/\mu| \in
      \{i,i-1\} \\ (\lambda, \mu) \in \HS }} V_\mu,
\end{align}
and all higher cohomology vanishes. If $\lambda_n = 0$, then
\begin{align}
  \Hs^0(X; \Sc_\lambda \QQ \otimes \bigwedge^i \RR^\vee) &=
  \bigoplus_{\substack{ \mu \subseteq \lambda \\ |\lambda / \mu| = i \\
      (\lambda, \mu) \in \HS}} V_\mu \label{typeB2} \\
  \Hs^1(X; \Sc_\lambda \QQ \otimes \bigwedge^i \RR^\vee) &=
  \bigoplus_{\substack{ \mu \subseteq \lambda \\ |\lambda / \mu| = i-2
      \\ (\lambda, \mu) \in \HS} } V_\mu, \quad (\text{if } i \ge
  2), \label{typeB3}
\end{align}
and all other cohomology vanishes.
\end{theorem}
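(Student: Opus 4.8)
The plan is to push everything through Bott's theorem (Theorem~\ref{bottstheorem}) by way of the tautological inclusion $\RR\subseteq\RR^\vee$, reducing the whole computation to the analysis of a single connecting homomorphism.

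First I would record the key short exact sequence. Since $F$ has odd dimension, $\RR$ is the radical of $\omega|_{\RR^\vee}$, so $\omega$ descends to a nondegenerate form on the line bundle $L=\RR^\vee/\RR$, whence $L^{\otimes2}\cong\OO_X$; and since $P$ is connected and preserves the induced form on the fibre of $L$, it acts trivially there, so $L\cong\OO_X$ as a $G$-equivariant line bundle. Taking $\bigwedge^i$ of $0\to\RR\to\RR^\vee\to\OO_X\to0$ and tensoring with $\Sc_\lambda\QQ$ gives, for each $i$, a $G$-equivariant short exact sequence
\[
0 \to \Sc_\lambda\QQ\otimes\textstyle\bigwedge^i\RR \to \Sc_\lambda\QQ\otimes\textstyle\bigwedge^i\RR^\vee \to \Sc_\lambda\QQ\otimes\textstyle\bigwedge^{i-1}\RR \to 0 .
\]
Next I would compute the cohomology of the outer terms. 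As $\QQ^*=\RR$, Pieri's formula gives a multiplicity-free decomposition $\Sc_\lambda\QQ\otimes\bigwedge^j\RR\cong\bigoplus_\nu\Sc_\nu\QQ$, the sum over $\GL_n$-dominant weights $\nu$ obtained from $\lambda$ by removing a horizontal strip of size $j$, where the last entry is allowed to drop to $-1$. By Theorem~\ref{bottstheorem} and Remark~\ref{dualweights} (every representation occurring is self-dual in type ${\rm B}$), a summand with $\nu_n\ge0$ contributes $V_\nu$ in degree $0$, and a summand with $\nu_n=-1$ has $\nu+\rho$ brought to a dominant weight by the single sign change on the last coordinate, which has length $1$, so it contributes $V_{\nu+\eps_n}$ in degree $1$ (the last entry of $\nu+\eps_n$ being $0$). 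The horizontal strip condition forces $\nu_i\ge\lambda_i-1\ge-1$ for all $i$, so only the last coordinate of $\nu+\rho$ can be negative; hence $\Hs^{\ge2}(X;\Sc_\lambda\QQ\otimes\bigwedge^j\RR)=0$ in all cases, and $\Hs^{\ge1}$ vanishes as well when $\lambda_n>0$.

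If $\lambda_n>0$ the outer terms have cohomology only in degree $0$, the long exact sequence collapses to a short exact sequence of $\Hs^0$'s, and summing the two descriptions above yields \eqref{typeB1}. If $\lambda_n=0$ then every horizontal strip $\lambda/\mu$ has $\mu_n=0$, and the bookkeeping above identifies $\Hs^0(X;\Sc_\lambda\QQ\otimes\bigwedge^{i-1}\RR)$ and $\Hs^1(X;\Sc_\lambda\QQ\otimes\bigwedge^i\RR)$ with the same representation $\bigoplus_{(\lambda,\mu)\in\HS,\ |\lambda/\mu|=i-1}V_\mu$ (via $\mu\leftrightarrow\nu=\mu-\eps_n$), while $\Hs^2(\Sc_\lambda\QQ\otimes\bigwedge^i\RR)=0$. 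The long exact sequence then becomes
\[
0 \to \Hs^0(\Sc_\lambda\QQ\otimes\textstyle\bigwedge^i\RR) \to \Hs^0(\Sc_\lambda\QQ\otimes\textstyle\bigwedge^i\RR^\vee) \to \Hs^0(\Sc_\lambda\QQ\otimes\textstyle\bigwedge^{i-1}\RR) \xrightarrow{\ \delta\ } \Hs^1(\Sc_\lambda\QQ\otimes\textstyle\bigwedge^i\RR) \to \Hs^1(\Sc_\lambda\QQ\otimes\textstyle\bigwedge^i\RR^\vee) \to 0,
\]
and \eqref{typeB2}, \eqref{typeB3}, together with vanishing of higher cohomology of $\bigwedge^i\RR^\vee$, follow once $\delta$ is an isomorphism.

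The crux --- the delicate step flagged in the introduction, where ``repeating representations cancel'' --- is precisely that $\delta$ is an isomorphism. Its source and target are isomorphic, multiplicity-free $G$-representations and $\delta$ is equivariant, so it suffices to show that the component of $\delta$ on each irreducible $V_\mu$ is nonzero. I would identify $\delta$ with cup product against the extension class of $0\to\RR\to\RR^\vee\to\OO_X\to0$ in $\ext^1(\OO_X,\RR)=\Hs^1(X;\RR)$ followed by the wedge map $\Hs^1(\Sc_\lambda\QQ\otimes\bigwedge^{i-1}\RR\otimes\RR)\to\Hs^1(\Sc_\lambda\QQ\otimes\bigwedge^i\RR)$, and then establish nonvanishing of each component by an explicit computation --- for instance a \v{C}ech cocycle computation on the big Schubert cell of $X$, or restriction of the extension to a rational curve in $X$ along which it becomes the non-split bundle in $0\to\OO_{\P^1}(-2)\to\RR^\vee|_{\P^1}\to\OO_{\P^1}\to0$, where $\delta$ specializes to the nonzero boundary map $\Hs^0(\P^1,\OO)\to\Hs^1(\P^1,\OO(-2))$. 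The genuine obstacle is that $\Sc_\lambda\QQ\otimes\bigwedge^i\RR^\vee$ is not a direct sum of irreducible homogeneous bundles, so its cohomology is strictly smaller than that of its associated graded, and one must compute this boundary map exactly rather than merely up to its equivariant shape.
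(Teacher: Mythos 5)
Your reduction is exactly the paper's: the exact sequence $0 \to \RR \to \RR^\vee \to \OO_X \to 0$, exterior powers, Bott's theorem on the outer terms $\Sc_\lambda\QQ\otimes\bigwedge^j\RR$, and the long exact sequence, so that everything hinges on the connecting map $\delta$ being an isomorphism when $\lambda_n=0$ (this is Lemma~\ref{deltaiso} in the paper). Up to that point your argument is essentially correct, with one small imprecision: a summand of $\Sc_\lambda\QQ\otimes\bigwedge^j\RR$ whose weight ends in $(\dots,-1,-1)$ (i.e.\ $\mu_{n-1}=\mu_n=0$ in the paper's notation) is \emph{singular} --- the sign change on the last coordinate produces a weight fixed by a further dotted reflection --- so it contributes nothing rather than a degree-one class; this vanishing is in fact needed for your identification of $\Hs^1$ with the horizontal strips of size $i-1$, so it should be checked rather than swept into the blanket statement that every $\nu_n=-1$ summand survives to $\Hs^1$.

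The genuine gap is at the step you yourself flag as the crux: you never prove that $\delta$ is nonzero on each isotypic component $V_\mu$. Identifying $\delta$ with cup product against the extension class and then saying ``establish nonvanishing by an explicit computation, for instance a \v{C}ech cocycle on the big cell, or restriction to a rational curve'' names strategies but does not carry any of them out, and the second one is not obviously sound as stated: after restricting to a line $\P^1\subset X$, the bundles $\Sc_\lambda\QQ\otimes\bigwedge^{j}\RR$ split into many line bundles, and you would still have to show that the $V_\mu$-component of $\delta$ does not die under the restriction maps $\Hs^0(X;-)\to\Hs^0(\P^1;-)$ and $\Hs^1(X;-)\to\Hs^1(\P^1;-)$ --- precisely the kind of exact information that, as you note, is not determined by the equivariant shape alone. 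The paper closes this gap by a fiberwise argument: it isolates the subbundle $\EE\subset\Sc_\lambda\QQ\otimes\bigwedge^i\RR^\vee$ sitting in $0\to\Sc_{\mu-\eps_n}\QQ\to\EE\to\Sc_\mu\QQ\to 0$, proves the corresponding extension of $P$-modules on the fiber at the $P$-fixed point is non-split by letting an explicit unipotent $g\in P$ (sending $e_{n+1}\mapsto e_n+e_{n+1}$) act on the image of the canonical tableau under Olver's Pieri inclusion, and then concludes via Frobenius reciprocity that $\Hs^0(X;\EE)$ has no $V_\mu$-isotypic part, forcing $\delta$ to be an isomorphism on that component. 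Some argument of this exactness --- whether the paper's or a completed version of one of your proposals --- is required before the statement is proved.
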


\begin{proof}
  First we calculate the cohomology groups of $\Sc_\lambda \QQ \otimes
  \bigwedge^i \RR$. We use that $\RR = \QQ^*$ and hence $\bigwedge^i
  \RR = \bigwedge^{n-i} \QQ \otimes (\bigwedge^n \QQ)^{-1}$. So by
  Pieri's formula, $\Sc_\lambda \QQ \otimes \bigwedge^i \RR =
  \bigoplus_\mu \Sc_\mu \QQ \otimes (\bigwedge^n \QQ)^{-1}$ where the
  sum is over all $\mu$ such that $|\mu/\lambda| = n-i$ and $\mu_j -
  \lambda_j \le 1$ for all $j$. The highest weight of the
  representation $\Sc_\mu \QQ \otimes (\bigwedge^n \QQ)^{-1}$ is
  $(\mu_1 - 1, \dots, \mu_n - 1)$. Since this is a dominant weight for
  $G$ if and only if $\mu_n - 1 \ge 0$, we conclude from
  Theorem~\ref{bottstheorem} that
  \begin{align} \label{typeBH0} \Hs^0(X; \Sc_\lambda \QQ \otimes
    \bigwedge^i \RR) =
    \bigoplus_{\substack{ \mu \subseteq \lambda \\ |\lambda / \mu| = i \\
        (\lambda, \mu) \in \HS}} V_\mu.
  \end{align}
  Now suppose that $\mu_n = 0$ (which can only happen if $\lambda_n =
  0$). Let $w \in W$ be the reflection given by the simple root
  $\eps_n$, i.e., it acts on weights by changing the sign of the last
  coordinate. In this case, $\rho = (\frac{2n-1}{2}, \frac{2n-3}{2},
  \dots, \frac{1}{2})$. So $w^\bullet(\mu_1 - 1, \dots, \mu_{n-1} - 1,
  -1) = (\mu_1 - 1, \dots, \mu_{n-1} - 1, 0)$. If $\mu_{n-1} \ge 1$,
  this weight is dominant. Otherwise, if $\mu_{n-1} = 0$, let $w_1 \in
  W$ be the reflection given by the simple root $\eps_{n-1} - \eps_n$,
  which permutes the last two coordinates. Then $w_1^\bullet$ fixes
  $(\mu_1 - 1, \dots, -1, 0)$, so there is no cohomology in this
  case. Hence we conclude that
  \begin{align} \label{typeBH1} \Hs^1(X; \Sc_\lambda \QQ \otimes
    \bigwedge^i \RR) =
    \bigoplus_{\substack{ \mu \subseteq \lambda \\
        |\lambda / \mu| = i-1 \\ (\lambda, \mu) \in \HS } } V_\mu,
    \quad (\text{if } \lambda_n = 0 \text{ and } i \ge 1).
  \end{align}

  There is a nonsplit exact sequence
  \[
  0 \to \RR \to \RR^\vee \to \OO_X \to 0,
  \]
  where the trivialization of the cokernel comes from the fact that it
  has rank 1 and the corresponding $P$-module has the zero
  weight. This sequence gives rise to a short exact sequence
  \[
  0 \to \bigwedge^i \RR \to \bigwedge^i \RR^\vee \to \bigwedge^{i-1}
  \RR \to 0.
  \]
  Now we tensor with $\Sc_\lambda \QQ$ and take the long exact
  sequence of cohomology to get
  \begin{align*}
    0 \to & \Hs^0(X; \Sc_\lambda \QQ \otimes \bigwedge^i \RR) \to
    \Hs^0(X; \Sc_\lambda \QQ \otimes \bigwedge^i \RR^\vee) \to
    \Hs^0(X; \Sc_\lambda \QQ \otimes \bigwedge^{i-1} \RR)
    \xrightarrow{\delta} \\
    & \Hs^1(X; \Sc_\lambda \QQ \otimes \bigwedge^i \RR) \to \Hs^1(X;
    \Sc_\lambda \QQ \otimes \bigwedge^i \RR^\vee) \to \Hs^1(X;
    \Sc_\lambda \QQ \otimes \bigwedge^{i-1} \RR) \to 0.
  \end{align*}

  By \eqref{typeBH1}, the higher cohomology vanishes if $\lambda_n \ge
  1$, and we conclude \eqref{typeB1} by semisimplicity of $G$. For
  $\lambda_n = 0$, \eqref{typeB2} and \eqref{typeB3} follow from
  \eqref{typeBH0} and \eqref{typeBH1} if we can show that $\delta$ is
  an isomorphism, and this is the content of Lemma~\ref{deltaiso}.
\end{proof}

\begin{lemma} \label{deltaiso} If $\lambda_n = 0$ and $i \ge 1$, then
  $\delta$ is an isomorphism.
\end{lemma}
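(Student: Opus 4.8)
The plan is to recognize $\delta$ as a cup product, use the decompositions already established to reduce the statement to a non-vanishing assertion via Schur's lemma, and then prove that non-vanishing by computing $\Hs^\bullet(X;\Sc_\lambda\QQ\otimes\bigwedge^i\RR^\vee)$ directly from an auxiliary Koszul resolution. First, by \eqref{typeBH0} (applied with $i$ replaced by $i-1$) and by \eqref{typeBH1}, both the source $\Hs^0(X;\Sc_\lambda\QQ\otimes\bigwedge^{i-1}\RR)$ and the target $\Hs^1(X;\Sc_\lambda\QQ\otimes\bigwedge^i\RR)$ of $\delta$ are isomorphic, as $G$-modules, to $\bigoplus_\mu V_\mu$, the sum over partitions $\mu\subseteq\lambda$ with $(\lambda,\mu)\in\HS$ and $|\lambda/\mu|=i-1$, each $V_\mu$ occurring with multiplicity one. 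Since $G$ is reductive and $\delta$ is $G$-equivariant, Schur's lemma reduces the claim to showing that the component map $V_\mu\to V_\mu$ is nonzero for every such $\mu$; and since the two sides have equal (finite) dimension, it is enough to prove that $\delta$ is injective. From the long exact cohomology sequence of $0\to\bigwedge^i\RR\to\bigwedge^i\RR^\vee\to\bigwedge^{i-1}\RR\to 0$ tensored with $\Sc_\lambda\QQ$, injectivity of $\delta$ is equivalent to the assertion that the inclusion $\bigwedge^i\RR\otimes\Sc_\lambda\QQ\hookrightarrow\bigwedge^i\RR^\vee\otimes\Sc_\lambda\QQ$ induces an isomorphism on $\Hs^0$.

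Conceptually, $\delta$ is the Yoneda product with the class $e\in\Hs^1(X;\RR)$ of the nonsplit extension $0\to\RR\to\RR^\vee\to\OO_X\to 0$, followed by the wedge map $\bigwedge^{i-1}\RR\otimes\RR\to\bigwedge^i\RR$: indeed, the short exact sequence above is exactly the one obtained from $0\to\RR\to\RR^\vee\to\OO_X\to 0$ by taking $i$-th exterior powers. Here $\Hs^1(X;\RR)\cong K$ — this is the $\lambda=0$, $i=1$ case of \eqref{typeBH1} — and $e\neq 0$ because the sequence does not split; the non-splitting is visible already from $\Hs^0(X;\RR^\vee)=0$, which holds since a global section of $\RR^\vee$ is a vector of $F$ orthogonal to every isotropic $n$-plane parametrized by $X$, hence $0$, as the standard representation $F$ is irreducible for $G$.

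To carry out the non-vanishing I would compute $\Hs^\bullet(X;\Sc_\lambda\QQ\otimes\bigwedge^i\RR^\vee)$ directly. Since $\RR^\vee=\ker(F\otimes\OO_X\to\QQ)$, the sheaf $\bigwedge^i\RR^\vee$ sits at the left end of the exact complex
\[
0\to\bigwedge^i\RR^\vee\to\bigwedge^i F\otimes\OO_X\to\bigwedge^{i-1}F\otimes\QQ\to\bigwedge^{i-2}F\otimes\Sym^2\QQ\to\cdots\to F\otimes\Sym^{i-1}\QQ\to\Sym^i\QQ\to 0;
\]
after tensoring with $\Sc_\lambda\QQ$ and applying Pieri's formula, every term other than $\bigwedge^i\RR^\vee\otimes\Sc_\lambda\QQ$ becomes a direct sum of bundles $\Sc_\rho\QQ$ with $\rho$ a genuine partition (this is where the hypothesis $\lambda_n=0$ enters, guaranteeing $\ell(\rho)\le n$), and such bundles have no higher cohomology by Theorem~\ref{bottstheorem}. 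Hence the hypercohomology spectral sequence collapses and $\Hs^\bullet(X;\Sc_\lambda\QQ\otimes\bigwedge^i\RR^\vee)$ is the cohomology of the complex of global sections, whose $j$-th term is the explicit $G$-module $\bigwedge^{i-j}F\otimes\bigoplus_{(\rho,\lambda)\in\VS,\ |\rho/\lambda|=j}V_\rho$. It then remains to show that the $\Hs^0$ of this complex equals exactly $\bigoplus_{(\lambda,\mu)\in\HS,\ |\lambda/\mu|=i}V_\mu$; by \eqref{typeBH0} this is the same space as $\Hs^0(X;\Sc_\lambda\QQ\otimes\bigwedge^i\RR)$, so the inclusion in the first paragraph induces an isomorphism on $\Hs^0$, hence $\delta$ is injective, and therefore (by the dimension count) an isomorphism; this simultaneously yields \eqref{typeB2} and \eqref{typeB3}. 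This last step — a representation-theoretic rank computation, tracing the Koszul-type differentials through the Pieri decompositions and identifying the cancellations — is the main obstacle; I expect it to be cleanest to pass to highest weight vectors so that each isotypic statement becomes a finite check. A hands-on alternative, bypassing the auxiliary resolution, is to represent $e$ by an explicit cocycle — equivalently, to use the Lie-algebra cohomology model $\Hs^1(X;\RR)=\Hs^1(\mathfrak{u}_P;R)^{\mathfrak l}$, in which $e$ is the class of $\operatorname{id}_R\in\hom_{\mathfrak l}(\mathfrak{u}_P^{\mathrm{ab}},R)=\hom_{\mathfrak l}(R,R)$ — and to compute $\delta$ on the highest weight vector of each $V_\mu$ by hand.
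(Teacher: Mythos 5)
Your reductions are all correct: by \eqref{typeBH0} and \eqref{typeBH1} both sides of $\delta$ are multiplicity-free sums of the same $V_\mu$'s, so by Schur's lemma and finite-dimensionality it suffices to prove injectivity, which via the long exact sequence is the same as saying that $\Hs^0(X;\Sc_\lambda\QQ\otimes\bigwedge^i\RR)\to\Hs^0(X;\Sc_\lambda\QQ\otimes\bigwedge^i\RR^\vee)$ is onto, i.e.\ that \eqref{typeB2} holds. The Koszul-type resolution of $\bigwedge^i\RR^\vee$ by the Bott-acyclic bundles $\bigwedge^{i-j}F\otimes\Sym^j\QQ\otimes\Sc_\lambda\QQ$ and the collapse of the hypercohomology spectral sequence are also fine (though the role you assign to $\lambda_n=0$ there is spurious: Schur functors of the rank-$n$ bundle $\QQ$ vanish automatically for partitions with more than $n$ rows). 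But at this point you have only traded the lemma for an exactly equivalent statement: identifying $\Hs^0$ of the global-sections complex with $\bigoplus_{(\lambda,\mu)\in\HS,\,|\lambda/\mu|=i}V_\mu$ \emph{is} \eqref{typeB2}, and you explicitly defer this ("the main obstacle"), as you do the alternative of evaluating $\delta$ on highest weight vectors in a Lie-algebra cohomology model. So the heart of the proof is missing, not merely routine: the kernel of the first differential $\bigwedge^i F\otimes V^*_\lambda\to\bigwedge^{i-1}F\otimes\Hs^0(X;\QQ\otimes\Sc_\lambda\QQ)$ must be computed isotypic component by isotypic component, and since restriction to $G$ and the Newell--Littlewood rule produce $V_\mu$ with multiplicities greater than one in these terms, one has to control ranks of specific equivariant maps between multiplicity spaces, not just match characters. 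Nothing you have written does that, and it is not clear the bookkeeping is any easier than the original statement.

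For comparison, the paper avoids any global rank computation: for each relevant $\mu$ it isolates inside $\Sc_\lambda\QQ\otimes\bigwedge^i\RR^\vee$ a two-step homogeneous subbundle $0\to\Sc_{\mu-\eps_n}\QQ\to\EE\to\Sc_\mu\QQ\to0$ whose two pieces are exactly the Bott contributors to the two copies of $V_\mu$, shows by an explicit element $g\in P$ acting on the image of a canonical tableau under an Olver map that this extension of $P$-modules is non-split, and then uses Frobenius reciprocity to conclude that $\Hs^0(X;\EE)$ has no $V_\mu$-component, forcing the $V_\mu$-component of $\delta$ to be nonzero. If you want to salvage your approach, you either need to carry out the rank computation in your Koszul complex in full (including the multiplicity issue), or supply a concrete cocycle computation for $\delta$ as in your last sentence; as it stands the proposal is a plausible plan with the decisive step unexecuted.
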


\begin{proof}
  Consider the short exact sequence
  \begin{align} \label{sheafsequence}
  0 \to \Sc_\lambda \QQ \otimes \bigwedge^i \RR \to \Sc_\lambda \QQ
  \otimes \bigwedge^i \RR^\vee \to \Sc_\lambda \QQ \otimes
  \bigwedge^{i-1} \RR \to 0.
  \end{align}
  Let $\mu \subseteq \lambda$ be a partition such that $(\lambda, \mu)
  \in \HS$ and $|\lambda / \mu| = i-1$, so that $V_\mu$ is a
  subrepresentation of $\Hs^0(X; \Sc_\lambda \QQ \otimes
  \bigwedge^{i-1} \RR)$ and of $\Hs^1(X; \Sc_\lambda \QQ \otimes
  \bigwedge^i \RR)$. Inside of $\Sc_\lambda \QQ \otimes \bigwedge^i
  \RR$ is a subbundle isomorphic to $\Sc_{\mu - \eps_n} \QQ$ (since
  $\lambda_n = 0$), and inside of $\Sc_\lambda \QQ \otimes
  \bigwedge^{i-1} \RR$ is a subbundle isomorphic to $\Sc_\mu \QQ$. Let
  $\EE$ be the subbundle of $\Sc_\lambda \QQ \otimes \bigwedge^i
  \RR^\vee$ which is the extension of these two subbundles:
  \[
  0 \to \Sc_{\mu - \eps_n} \QQ \to \EE \to \Sc_\mu \QQ \to 0.
  \]
  In order to show that $\delta$ is an isomorphism between the two
  copies of $V_\mu$, it is enough to show that $\EE$ is a nontrivial
  extension. To show this, we will use the equivalence between
  homogeneous bundles and rational $P$-modules, and show that the
  corresponding short exact sequence on the fiber over the $P$-fixed
  point of $X$ is a non-split extension.

  Let $R$, $R^\vee$, and $Q$ denote the fibers of $\RR$, $\RR^\vee$,
  and $\QQ$, respectively, over the $P$-fixed point of $X =
  G/P$. Recall that we have a basis $e_1, \dots, e_{2n+1}$ for $F$
  such that $e_i$ and $e_{2n+2-i}$ are dual basis vectors. Then
  \[
  R = \langle e_1, \dots, e_n \rangle,\quad R^\vee = \langle e_1,
  \dots, e_n, e_{n+1} \rangle,\quad Q = \langle e^*_1, \dots, e^*_n
  \rangle = \langle e_{2n+1}, \dots, e_{n+2} \rangle.
  \]
  Also, let $G_0$ be the subgroup of $P$ which leaves $\langle
  e_{n+1}, \dots, e_{2n+1} \rangle$ invariant. By definition, $P$
  leaves $R$ invariant, so $G_0 \cong \GL(R)$, and hence is linearly
  reductive. Thus any rational $P$-module is completely reducible as a
  $G_0$-module, and we will refer to such a decomposition as the
  associated graded module. To keep track of the $P$-action, we should
  write the fiber of \eqref{sheafsequence} as
  \[
  0 \to \Sc_\lambda Q \otimes \bigwedge^i R \to \Sc_\lambda Q \otimes
  \bigwedge^i R^\vee \to \Sc_\lambda Q \otimes \bigwedge^{i-1} R
  \otimes R^\vee / R \to 0.
  \]

  Let $g \in P$ be defined by $g(e_{n+1}) = e_n + e_{n+1}$,
  $g(e_{n+2}) = -\frac{1}{2}e_n - e_{n+1} + e_{n+2}$ and $g(e_i) =
  e_i$ for $i \notin \{ n+1, n+2\}$. (We are only interested in the
  value of $g(e_{n+1})$, the value of $g(e_{n+2})$ given is needed to
  ensure that $g$ preserves the symplectic form.) Then $\Sc_\mu Q$ is
  a summand of $\Sc_\lambda Q \otimes \bigwedge^{i-1} R \otimes R^\vee
  / R$, and hence is a summand of the associated graded of
  $\Sc_\lambda Q \otimes \bigwedge^i R^\vee$. We claim that $g$ does
  not fix $\Sc_\mu Q$, which will show that $\EE$ is a nontrivial
  extension of $\Sc_\mu \QQ$ and $\Sc_{\mu - \eps_n} \QQ$.

  Consider an Olver map (see Corollary~\ref{exteriorolver})
  \[
  \Sc_\mu Q \otimes \bigwedge^n Q \to \Sc_\lambda Q \otimes
  \bigwedge^{n-i+1} Q.
  \]
  Since $\mu_n = 0$, the canonical tableau $T$ inside of $\Sc_\mu Q
  \otimes \bigwedge^n Q$ contains a single $e_n^*$ in the $n$th
  column, and its image is a linear combination over possible ways to
  remove $n-i+1$ boxes from $T$. Since $\lambda_n = 0$, some of these
  summands will be of the form $T_\lambda \otimes e^*_I \wedge e^*_n$
  where $e^*_I \in \bigwedge^{n-i} Q$. Hence when we tensor this Olver
  map with $\bigwedge^n R \otimes R^\vee / R$, we can contract this
  $e^*_n$ with a copy of $e_n$ for such summands. The main point is
  that under the isomorphism
  \[
  \Sc_\lambda Q \otimes \bigwedge^{n-i+1} Q \otimes \bigwedge^n R
  \otimes R^\vee / R \cong \Sc_\lambda Q \otimes \bigwedge^{i-1} R
  \otimes R^\vee / R,
  \] 
  the image of the canonical tableau $T$ is a linear combination $T =
  \sum_j c_j T_j \otimes e_{I(j)} \otimes e_{n+1}$ where for at least
  one value of $j$ with $c_j \ne 0$, we have $e_{I(j)} \wedge e_{n+1}
  \ne 0$. But the action of $g$ on $T$ is given by replacing the
  $e_{n+1}$ factor by $e_n + e_{n+1}$, so $gT$ does not lie in
  $\Sc_\mu Q$, but in the span of both $\Sc_\mu Q$ and $\Sc_{\mu -
    \eps_n} Q$. So we have established that $\EE$ is a nontrivial
  extension.

  Thus, $\EE$ cannot have a $V_\mu$-isotypic component in its global
  sections. This follows, for example, from the identification of
  cohomology of homogeneous bundles with an induction functor (see
  \cite[Proposition I.5.12]{jantzen}) and Frobenius reciprocity (see
  \cite[Proposition I.3.4]{jantzen}) since the extension of $\Sc_\mu Q$
  and $\Sc_{\mu-\eps_n} Q$ has no $P$-submodule isomorphic to $\Sc_\mu
  Q$.
\end{proof}

We now show that the hypothesis of
Lemma~\ref{filtration}(\ref{filtration3}) is satisfied. Suppose that
$\lambda(i)$ and $\lambda(i+1)$ are partitions appearing in the $i$th
and $(i+1)$st degrees of the Pieri resolution ${\bf F}(\alpha;
\beta)_\bullet$ such that the differential
\[
\Hs^0(X; \Sc_{\lambda(i+1)} \QQ \otimes \bigwedge^j \RR^\vee) \to
\Hs^0(X; \Sc_{\lambda(i)} \QQ \otimes \bigwedge^j \RR^\vee)
\]
is not minimal. The first term has homogeneous degree $-|\lambda(i+1)
/ \alpha| - j$ and the second has homogeneous degree $-|\lambda(i) /
\alpha| - j$, so in order for this happen, we would need that
$|\lambda(i)| = |\lambda(i+1)|$. But in this case, this differential
is induced by a degree 0 map of the form $\Sc_{\lambda(i+1)} \QQ
\otimes \BB \to \Sc_{\lambda(i)} \QQ \otimes \BB$ which is zero unless
$\lambda(i+1) = \lambda(i)$. But in the latter case, the Pieri
resolution we started with is not minimal. 

If we have a horizontal map from an $\Hs^1$ term to an $\Hs^0$ term
(and hence $\lambda(i)_n = \lambda(i+1)_n = 0$), then it must have
degree at least 2: if it had smaller degree, then $|\lambda(i+1)| -
|\lambda(i)| \le 1$, but then \eqref{typeB2} and \eqref{typeB3} show
that the map must be 0. Hence the hypothesis of
Lemma~\ref{filtration}(\ref{filtration3}) holds.

Finally, we have to show that any higher differential from an $\Hs^0$
term to an $\Hs^1$ term in the iterated mapping cone of
\eqref{sheafpieri} is 0. Here we will use the assumption that the
differentials in \eqref{sheafpieri} have degree $>1$. We consider a
map of the form
\[
\Hs^0(X; \Sc_{\lambda(i+k)} \QQ \otimes \bigwedge^j \RR^\vee) \to
\Hs^1(X; \Sc_{\lambda(i)} \QQ \otimes \bigwedge^{j+k} \RR^\vee),
\]
where $k \ge 1$. This map is equivariant, so there is some $\mu$ for
which $V_\mu$ appears in both modules, so pick such a $\mu$. Then by
\eqref{typeB3}, we have $|\mu| = |\lambda(i)| - j-k+2$. Also, $|\mu| =
|\lambda(i+k)| - j$ or $|\mu| = |\lambda(i+k)| - j + 1$ by
\eqref{typeB1} and \eqref{typeB2}. This implies that $|\lambda(i+k)| -
|\lambda(i)| \le 2-k$. The higher differential lifts the map $A
\otimes V_{\lambda(i+k)} \to A \otimes V_{\lambda(i)}$ by
\eqref{typeB1}. But such a map has degree $\le 2-k \le 1$, so is 0
because we assumed that \eqref{sheafpieri} does not have linear
differentials.

~

More generally, suppose that $N$ is the cokernel of an equivariant map
of the form 
\[
\bigoplus_{i=1}^r A(-|\beta^i / \alpha|) \otimes V_{\beta^i} \to A
\otimes V_{\alpha}
\]
where each of $\beta^1, \dots, \beta^r$ is obtained by adding vertical
strips to $\alpha$. Let $M$ be the sections of the cokernel of the map
of sheaves $\bigoplus_{i=1}^r \Sc_{\beta^i} \QQ \otimes \BB \to
\Sc_{\alpha} \QQ \otimes \BB$. From the description above, we have a
surjection $N \to M \to 0$. Letting $N_0$ be the kernel, we know that
$N_0$ is generated by all partitions obtained by removing a box from
$\alpha$. If there was only one such partition, we can resolve $N_0$
by induction on the size of $\alpha$. Otherwise, call these partitions
$\alpha'_1, \dots, \alpha'_s$, and add the relations $\alpha'_{i+1},
\dots, \alpha'_s$ to $N$ to get a new module $L_i$. Then we have a
short exact sequence
\[
0 \to L'_i \to L_i \to L_{i-1} \to 0
\]
where $L'_i$ is generated by $\alpha'_i$. So we can get a resolution
for $L_i$ via resolutions of both $L'_i$ and $L_{i-1}$. Note that $L_0
= M$ and $L_s = N$, so we have described a sequence of quotients 
\[
N = L_s \to L_{s-1} \to \cdots \to L_1 \to L_0 = M
\]
for obtaining a resolution of $N$. Unfortunately, it will usually be
far from minimal, and one must know something about which
cancellations will occur. These remarks also apply to the cases of the
symplectic group and the even orthogonal group.

\subsection{Type $\mathrm{C}_n$: Symplectic groups.}

\begin{theorem} Let $G$ be a group of type $\mathrm{C}_n$. Then
\begin{align} \label{typeC1}
\Hs^0(X; \Sc_\lambda \QQ \otimes \bigwedge^i \RR^\vee) =
\bigoplus_{\substack{ \mu \subseteq \lambda \\ |\lambda / \mu| = i \\
    (\lambda, \mu) \in \HS}} V_\mu,
\end{align}
and all higher cohomology vanishes.
\end{theorem}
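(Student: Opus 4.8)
The plan is to run the first half of the type $\mathrm{B}_n$ argument and observe that in type $\mathrm{C}_n$ it already finishes the computation. The key simplification is that $F$ is even-dimensional, so $\RR = \RR^\vee$ and hence $\bigwedge^i \RR^\vee = \bigwedge^i \RR$; thus it suffices to compute $\Hs^\bullet(X; \Sc_\lambda \QQ \otimes \bigwedge^i \RR)$. Because $\RR = \RR^\vee$, there is no second short exact sequence (the $0 \to \RR \to \RR^\vee \to \OO_X \to 0$ of the type $\mathrm{B}$ proof degenerates) and no connecting homomorphism to analyze, so one decomposition plus one application of Borel--Weil--Bott does the job.

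Concretely, I would first use $\RR = \QQ^*$ and the identity $\bigwedge^i \RR \cong \bigwedge^{n-i}\QQ \otimes (\bigwedge^n \QQ)^{-1}$ for the rank-$n$ bundle $\QQ$, exactly as in the type $\mathrm{B}$ computation. The exterior-power case of Pieri's formula then gives $\Sc_\lambda \QQ \otimes \bigwedge^i \RR \cong \bigoplus_\mu \Sc_\mu \QQ \otimes (\bigwedge^n \QQ)^{-1}$, the sum ranging over partitions $\mu$ with $|\mu/\lambda| = n-i$ and $(\mu,\lambda) \in \HS$ (equivalently $\mu_j - \lambda_j \le 1$ for all $j$). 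Each summand is irreducible with highest weight $(\mu_1 - 1, \dots, \mu_n - 1)$, so Theorem~\ref{bottstheorem} applies termwise.

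Next I would split on $\mu_n$, recalling that in type $\mathrm{C}_n$ one has $\rho = (n, n-1, \dots, 1)$ and the dominant weights are exactly the partitions. If $\mu_n \ge 1$, then $(\mu_1 - 1, \dots, \mu_n - 1)$ is already dominant, so the bundle has cohomology concentrated in degree $0$, equal to $V^*_{(\mu_1-1,\dots,\mu_n-1)} \cong V_{(\mu_1-1,\dots,\mu_n-1)}$ by self-duality of representations in type $\mathrm{C}_n$ (Remark~\ref{dualweights}). If $\mu_n = 0$ (which forces $\lambda_n = 0$), then $(\mu_1 - 1, \dots, \mu_{n-1}-1, -1) + \rho$ has last coordinate $0$, hence is fixed by the reflection in the simple root $2\eps_n$, and case (1) of Theorem~\ref{bottstheorem} gives no cohomology at all. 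This is precisely where type $\mathrm{C}$ diverges from type $\mathrm{B}$: there $\rho_n = \tfrac{1}{2}$, so the analogous weight is regular and produces an $\Hs^1$ term. Relabelling $\nu := (\mu_1 - 1, \dots, \mu_n - 1)$ converts the surviving index set $\{\mu : |\mu/\lambda| = n-i,\ (\mu,\lambda)\in\HS,\ \mu_n \ge 1\}$ bijectively into $\{\nu : \nu \subseteq \lambda,\ |\lambda/\nu| = i,\ (\lambda,\nu)\in\HS\}$, yielding \eqref{typeC1} and showing the total cohomology sits in degree $0$.

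I expect no serious obstacle here; this is the cleanest of the three types. The only points requiring care are the bookkeeping of the horizontal-strip conditions under the shift $\nu \mapsto \nu + (1^n)$ and the identification of the hyperplane $\{x_n = 0\}$ as the wall responsible for the vanishing when $\mu_n = 0$. As a consequence, since no $\Hs^{\ge 1}$ terms occur, the filtration of Lemma~\ref{filtration} is trivial and hypotheses (\ref{filtration1})--(\ref{filtration3}) hold automatically, which is what makes the iterated mapping cone minimal in type $\mathrm{C}$ unconditionally.
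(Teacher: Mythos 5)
Your proposal is correct and follows essentially the same route as the paper: the paper likewise reduces to $\bigwedge^i\RR$ via $\RR=\RR^\vee$, reuses the type $\mathrm{B}$ decomposition $\Sc_\lambda\QQ\otimes\bigwedge^i\RR=\bigoplus_\mu\Sc_\mu\QQ\otimes(\bigwedge^n\QQ)^{-1}$ with Theorem~\ref{bottstheorem}, and kills the $\mu_n=0$ summands by noting that the dotted reflection in $2\eps_n$ fixes $(\mu_1-1,\dots,\mu_n-1)$ since $\rho=(n,\dots,1)$. The only quibble is your closing remark that Lemma~\ref{filtration} then holds ``automatically'': one still needs the short degree argument (as in type $\mathrm{B}$) that horizontal $\Hs^0$-to-$\Hs^0$ differentials between terms of equal degree vanish by minimality of the Pieri resolution, but this lies outside the statement being proved.
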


\begin{proof} The dominant weights are the same for the symplectic and
  odd orthogonal groups, so using \eqref{typeBH0} and the fact that
  $\RR^\vee = \RR$ gives \eqref{typeC1}.

  Furthermore, if $\mu_n = 0$, then $\Sc_{\mu} \QQ \otimes
  (\bigwedge^n \QQ)^{-1}$ has no cohomology. To see this, let $w \in
  W$ be the reflection given by the simple root $2\eps_n$, which
  changes the sign of the last coordinate. Then $w^\bullet$ fixes
  $(\mu_1 - 1, \dots, \mu_n - 1)$ since $\rho = (n, n - 1, \dots, 2,
  1)$.
\end{proof}

The application of Lemma~\ref{filtration} follows as in the previous
section.

\subsection{Type $\mathrm{D}_n$: Even orthogonal groups.}

\begin{theorem} Let $G$ be a group of type $\mathrm{D}_n$. Then
\begin{align} \label{typeD1} \Hs^0(X; \Sc_\lambda \QQ \otimes
  \bigwedge^i \RR^\vee) =
  \bigoplus_{\substack{ \mu \supseteq \lambda \\
      \mu_{n-1} - 1 \ge |\mu_n - 1| \\ |\mu / \lambda| = n-i \\
      (\mu, \lambda) \in \HS}} V^*_{(\mu_1 - 1, \dots, \mu_n - 1)}.
\end{align}
If $\lambda_{n-1} = \lambda_n = 0$ and $i \ge 2$, we also have
\begin{align} \label{typeD2} 
  \Hs^1(X; \Sc_\lambda \QQ \otimes \bigwedge^i \RR^\vee) =
  \bigoplus_{\substack{ \mu \subseteq \lambda \\ |\lambda / \mu | =
      i-2 \\ (\lambda, \mu) \in \HS }} V_\mu, 
\end{align}
and all other cohomology vanishes.
\end{theorem}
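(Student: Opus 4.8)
The plan is to run the same machinery as in types $\mathrm{B}_n$ and $\mathrm{C}_n$, which simplifies considerably here. Since $F$ is $2n$-dimensional in type $\mathrm{D}_n$, we have $\RR = \RR^\vee$ and $\QQ = \ol{\QQ}$, so $\bigwedge^i\RR^\vee = \bigwedge^i\RR$; in particular there is no extension $0\to\RR\to\RR^\vee\to\OO_X\to 0$ to exploit, and hence nothing like the connecting-map analysis of Lemma~\ref{deltaiso} to carry out. The statement then reduces to a direct Borel--Weil--Bott computation (Theorem~\ref{bottstheorem}) for the single bundle $\Sc_\lambda\QQ\otimes\bigwedge^i\RR$. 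First I would use $\RR = \QQ^*$ together with $\bigwedge^i\QQ^* \cong \bigwedge^{n-i}\QQ\otimes(\bigwedge^n\QQ)^{-1}$ (recall $\QQ$ has rank $n$) and Pieri's formula to write $\Sc_\lambda\QQ\otimes\bigwedge^i\RR \cong \bigoplus_\mu \Sc_\mu\QQ\otimes(\bigwedge^n\QQ)^{-1}$, the sum over $\mu\supseteq\lambda$ with $(\mu,\lambda)\in\HS$ and $|\mu/\lambda| = n-i$; each summand is a homogeneous bundle whose highest weight is $\nu := (\mu_1-1,\dots,\mu_n-1)$.

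Next I would feed each $\nu$ into Theorem~\ref{bottstheorem}. Here $\rho = (n-1,n-2,\dots,1,0)$ and the Weyl group $W$ consists of signed permutations with an even number of sign changes, so the walls of the dotted action are $w_i = \pm w_j$; and $\nu+\rho = (\mu_1+n-2,\dots,\mu_{n-1},\mu_n-1)$ automatically has strictly decreasing nonnegative first $n-1$ coordinates, so the outcome is governed entirely by the last few coordinates. In three cases: (i) if $\mu_{n-1}-1\ge|\mu_n-1|$ then $\nu$ is already a dominant weight and $\Hs^0 = V^*_\nu$ — this is \eqref{typeD1}; (ii) if $\mu_{n-1}=\mu_n=0$ and $\mu_{n-2}\ge 1$ then $\nu+\rho = (\dots,\mu_{n-2}+1,0,-1)$ is regular, and the single simple reflection $s_{\eps_{n-1}+\eps_n}$, which sends $(0,-1)\mapsto(1,0)$ in the last two slots, carries it into the dominant chamber, giving $\Hs^1 = V^*_\beta = V_\beta$ (self-dual since $\beta_n=0$, by Remark~\ref{dualweights}) with $\beta = (\mu_1-1,\dots,\mu_{n-2}-1,0,0)$; (iii) in every remaining situation — $\mu_{n-1}=1$, $\mu_n=0$, giving last two coordinates $(1,-1)$, or $\mu_{n-1}=\mu_n=\mu_{n-2}=0$, giving coordinates $(1,0,-1)$ in slots $n-2,n-1,n$ — two coordinates of $\nu+\rho$ have equal absolute value, so $\nu$ lies on a wall and the summand has no cohomology in any degree. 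Only reflections of length $0$ or $1$ occur, so $\Hs^j = 0$ for $j\ge 2$. Finally, a case-(ii) summand forces $\lambda_{n-1}\le\mu_{n-1}=0$ and $\lambda_n=0$, so $\Hs^1$ vanishes unless $\lambda_{n-1}=\lambda_n=0$ (matching the hypothesis of \eqref{typeD2}), and $|\lambda/\beta| = i-2$ must be $\ge 0$, which is why $i\ge 2$ appears there.

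It then remains to reindex the $\Hs^1$ terms into the form of \eqref{typeD2}. Using the equivalence $(\mu,\lambda)\in\HS \iff 0\le\mu_j-\lambda_j\le 1$ for all $j$, the map sending $\mu$ to $\beta$ with $\beta_j = \mu_j-1$ for $j\le n-2$ and $\beta_j = 0$ for $j = n-1,n$ is a bijection from $\{\mu\supseteq\lambda : (\mu,\lambda)\in\HS,\ |\mu/\lambda| = n-i,\ \mu_{n-1}=\mu_n=0,\ \mu_{n-2}\ge 1\}$ onto $\{\beta\subseteq\lambda : (\lambda,\beta)\in\HS,\ |\lambda/\beta| = i-2\}$, with inverse $\beta\mapsto(\beta_1+1,\dots,\beta_{n-2}+1,0,0)$; one checks immediately that the horizontal-strip conditions and the sizes transfer, which yields \eqref{typeD2}. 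The one point requiring care is step (ii): one must verify that the reordering of $\nu+\rho$ into a dominant weight is realized by an honest element of the type-$\mathrm{D}_n$ Weyl group — that is, by an even number of sign changes — rather than by an odd signed permutation; this is precisely why the relevant element is the single simple reflection $s_{\eps_{n-1}+\eps_n}$ (length $1$), placing the extra cohomology in $\Hs^1$ and not elsewhere. Beyond this bit of signed-permutation bookkeeping, the proof is a short finite enumeration of where the last two or three coordinates of $\nu+\rho$ sit relative to the walls, and is noticeably less delicate than its type $\mathrm{B}_n$ counterpart.
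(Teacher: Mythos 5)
Your proposal is correct and follows essentially the same route as the paper: decompose $\Sc_\lambda\QQ\otimes\bigwedge^i\RR^\vee=\Sc_\lambda\QQ\otimes\bigwedge^i\RR$ via $\bigwedge^i\RR\cong\bigwedge^{n-i}\QQ\otimes(\bigwedge^n\QQ)^{-1}$ and Pieri, then apply Theorem~\ref{bottstheorem} summand by summand, with dominance giving \eqref{typeD1}, the simple reflection $s_{\eps_{n-1}+\eps_n}$ giving the length-one $\Hs^1$ contributions when $\mu_{n-1}=\mu_n=0$ and $\mu_{n-2}\ge 1$, and singularity (equal absolute values of two coordinates of $\nu+\rho$, which is how the paper's fixed-point checks with $w$ and $w_1$ read) killing the cases $\mu_{n-1}=1$ and $\mu_{n-2}=0$. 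Your explicit reindexing $\mu\mapsto(\mu_1-1,\dots,\mu_{n-2}-1,0,0)$ and the parity remark about using an honest even signed permutation are points the paper leaves implicit, but they are correct and do not change the argument.
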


Note that by Remark~\ref{dualweights}, we only need to refer to the
dual in \eqref{typeD1}.

\begin{proof} As before, $\Sc_\lambda \QQ \otimes \bigwedge^i \RR^\vee
  = \bigoplus_\mu \Sc_\mu \QQ \otimes (\bigwedge^n \QQ)^{-1}$ where
  the sum is over all $\mu$ such that $|\mu/\lambda| = n-i$ and $\mu_j
  - \lambda_j \le 1$ for all $j$. The highest weight of the
  representation $\Sc_\mu \QQ \otimes (\bigwedge^n \QQ)^{-1}$ is
  $(\mu_1 - 1, \dots, \mu_n - 1)$. This is a dominant weight if and
  only if $\mu_{n-1} - 1 \ge |\mu_n - 1|$, so we conclude
  \eqref{typeD1} from Theorem~\ref{bottstheorem}. Note that the
  condition $\mu_{n-1} - 1 \ge |\mu_n - 1|$ happens in exactly two
  cases: if $\mu_n \ge 1$, or if $\mu_n = 0$ and $\mu_{n-1} \ge 2$. So
  we need to study the cases when $\mu_n = 0$ and $\mu_{n-1} \in
  \{0,1\}$. Of course, this forces $\lambda_n = 0$.

  Let $w \in W$ be the reflection given by the simple root $\eps_{n-1}
  + \eps_n$. This acts on weights by
  \[
  w(\alpha_1, \dots, \alpha_{n-1}, \alpha_n) = (\alpha_1, \dots,
  \alpha_{n-2}, -\alpha_n, -\alpha_{n-1}).
  \]
  Also, we have $\rho = (n-1, n-2, \dots, 1, 0)$. In the case that
  $\mu_{n-1} = 1$, we see that $w^\bullet(\mu_1 - 1, \dots, 0, -1) =
  (\mu_1 - 1, \dots, 0, -1)$, so there is no cohomology. On the other
  hand, if $\mu_{n-1} = 0$ (which can only happen if $i \ge 2$ and
  $\lambda_{n-1} = 0$), then we have $w^\bullet(\mu_1 - 1, \dots,
  \mu_{n-2} - 1, -1, -1) = (\mu_1 - 1, \dots, \mu_{n-2} - 1, 0,
  0)$. There are two possibilities: either $\mu_{n-2} \ge 1$ or
  $\mu_{n-2} = 0$. In the first case, the resulting weight is
  dominant. In the second case, let $w_1 \in W$ be the reflection
  given by the simple root $\eps_{n-2} - \eps_{n-1}$, which permutes
  the $(n-2)$nd and $(n-1)$st coordinates. Then $w_1^\bullet(\mu_1 -
  1, \dots, -1, 0, 0) = (\mu_1 - 1, \dots, -1, 0, 0)$, so there is no
  cohomology. We conclude \eqref{typeD2} and all higher cohomology of
  $\Sc_\lambda \QQ \otimes \bigwedge^i \RR^\vee$ vanishes.
\end{proof}

Now we apply Lemma~\ref{filtration}. There are only two possibilities
for a horizontal differential to be non-minimal: the first is when
both terms are $\Hs^0$ terms or both are $\Hs^1$ terms (and these are
ruled out just as in the odd orthogonal case), and the second is when
one term is an $\Hs^0$ term and the other is an $\Hs^1$. First suppose
the horizontal differential maps an $\Hs^1$ term to an $\Hs^0$ term.
In this case, the differential has degree $\le 1$ only if the
partitions from \eqref{sheafpieri} that they resolve have the same
size, so the corresponding map must be 0. 

Now consider a higher differential in the iterated mapping cone of
\eqref{sheafpieri} of the form
\[
\Hs^0(X; \Sc_{\lambda(i+k)} \QQ \otimes \bigwedge^j \RR^\vee) \to
\Hs^1(X; \Sc_{\lambda(i)} \QQ \otimes \bigwedge^{j+k} \RR^\vee),
\]
for $k \ge 1$. We want to show that this map is identically 0. Suppose
that $V_\mu$ is a representation that appears in both modules. By
\eqref{typeD2}, $\mu$ is a partition with $\mu_n = 0$ such that $|\mu|
= |\lambda(i)| - j-k+2$. By \eqref{typeD1}, we get $|\mu| =
|\lambda(i+k)| - j$, so $|\lambda(i)| + 2-k = |\lambda(i+k)|$. So the
higher differential is 0 by similar considerations as in
Section~\ref{section:typeB}. 

\section{Toward equivariant Boij--S\"oderberg
  cones.} \label{equivariantbssection}

To give some context, we review the non-equivariant version of
Boij--S\"oderberg cones in Section~\ref{bsconesection} and present the
Boij--S\"oderberg algorithm for writing Betti tables as linear
combinations of pure Betti tables. In Section~\ref{conjecturesection},
we formulate a conjectural equivariant analogue of Boij--S\"oderberg
decompositions and provide some partial results. Finally, in
Section~\ref{equivariantexamples}, we present some examples of these
decompositions, and show that the equivariant analogue of the
Boij--S\"oderberg algorithm does not hold.

\subsection{Boij--S\"oderberg cones in general.} \label{bsconesection}

Let $A = K[x_1, \dots, x_n]$ as usual, and pick $c \le n$. The {\bf
  Boij--S\"oderberg cone}, denoted $\Delta$, is the cone generated by
all Betti tables corresponding to Cohen--Macaulay modules of
codimension $c$ with pure free resolutions. Given two degree sequences
$d = (d_0, \dots, d_c)$ and $d' = (d'_0, \dots, d'_c)$, we say that $d
\le d'$ if $d_i \le d'_i$ for $i=0,\dots,n$. Let $\Pi$ denote the
poset of all degree sequences. Any maximal chain $C$ in $\Pi$ forms a
simplicial cone inside of $\Delta$ by taking the cone generated by the
pure Betti tables coming from the degree sequences of $C$. By
\eqref{herzogkuhl}, these simplicial cones are well-defined. In fact,
the union of all such $C$ forms a simplicial fan $\mathcal{F}$ (see
\cite[Proposition 2.9]{bsconj}) which we call the {\bf
  Boij--S\"oderberg fan}. Recall from Section~\ref{purefreesection}
that $\B(M)$ denotes the graded Betti table of a module $M$. The
following was conjectured by Boij and S\"oderberg:

\begin{theorem}[Eisenbud--Schreyer] \label{bscone} Let $K$ be a field
  of arbitrary characteristic. Let $M$ be a finitely generated
  Cohen--Macaulay graded $A$-module of codimension $c$. Then
  $\B(M)$ can be written as a positive rational linear combination
  of Betti tables of pure Cohen--Macaulay modules of codimension
  $c$. This linear combination is unique if we require that the degree
  sequences of these pure free diagrams form a chain in $\Pi$.
\end{theorem}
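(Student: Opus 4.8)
The plan is to follow the original argument of Eisenbud and Schreyer, which separates cleanly into an existence half and a uniqueness half. The single deep input is the \emph{existence of pure resolutions}: for every strictly increasing degree sequence $d=(d_0<d_1<\cdots<d_c)$ there is a Cohen--Macaulay $A$-module of codimension $c$ whose minimal free resolution is pure of degree $d$. In characteristic zero this is exactly Theorem~\ref{equivariantres} (tensored with a suitable twist of an $\Sc_\lambda V$ and restricted to a linear subspace to adjust the codimension), and in arbitrary characteristic one substitutes the characteristic-free construction of pure complexes from \cite{es}. Once such a module is known to exist, the Herzog--K\"uhl equations \eqref{herzogkuhl} show that its Betti table is determined up to a positive scalar; call the resulting ray $\pi_d\in\Delta$.

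For existence of the decomposition I would run the greedy \emph{Boij--S\"oderberg algorithm}. Given a Cohen--Macaulay module $M$ of codimension $c$, let $\underline d$ be the degree sequence whose $i$th entry is the least internal degree in which $\B(M)$ is nonzero in homological position $i$; minimality of the resolution of $M$ forces $\underline d$ to be strictly increasing, so $\pi_{\underline d}$ is defined. Subtract from $\B(M)$ the largest rational multiple $q_1\pi_{\underline d}$ such that $\B(M)-q_1\pi_{\underline d}$ still has all entries $\ge 0$. The difference again satisfies the Herzog--K\"uhl relations for codimension $c$ (these are linear, hence preserved) and lies in the nonnegative orthant; the key point is that it still lies in $\Delta$ and, if nonzero, its top degree sequence has moved strictly upward in the poset $\Pi$. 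Since every degree sequence produced is squeezed between the bottom and top degree sequences of the original table, the poset of possibilities is finite, the procedure terminates, and it exhibits $\B(M)=\sum_k q_k\,\pi_{d^{(k)}}$ with $q_k>0$ and $d^{(1)}<d^{(2)}<\cdots$ a chain in $\Pi$.

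For uniqueness I would exploit the simplicial structure of the Boij--S\"oderberg fan $\mathcal F$. Along any maximal chain $C$ the rays $\{\pi_d:d\in C\}$ are linearly independent (each $\pi_d$ is supported exactly at the corner positions $(i,d_i)$, and comparing supports along the chain gives triangularity), so they span a simplicial cone; by \cite[Proposition~2.9]{bsconj} these cones glue into the fan $\mathcal F$. Hence a point of $\Delta$ lies in a \emph{unique} minimal cone of $\mathcal F$, whose extremal rays are indexed by a unique chain, and within a simplicial cone the coefficients of a point are uniquely determined. This is precisely the asserted uniqueness under the hypothesis that the degree sequences form a chain.

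The main obstacle is not the bookkeeping above but the two facts I would import from \cite{es}: the existence of pure resolutions for \emph{every} degree sequence over an arbitrary field, and the sharp description of the cone $\Delta$ by its supporting hyperplanes — equivalently, the statement that subtracting $q_1\pi_{\underline d}$ keeps us inside $\Delta$. The latter is where Eisenbud and Schreyer introduce the bilinear pairing between Betti tables over $\Sym(V)$ and cohomology tables of vector bundles on $\P(V)$ and establish its nonnegativity; this pins down exactly which linear functionals are nonnegative on $\Delta$ and is the crux of the whole theorem. I would treat both of these as the cited ingredients from \cite{es} and devote the new exposition to the algorithmic peeling and the fan-theoretic uniqueness.
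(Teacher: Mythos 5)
Your outline is correct and takes essentially the same route as the paper, whose entire proof of Theorem~\ref{bscone} is the citation to \cite[\S 7]{es} (with the greedy peeling algorithm you describe recalled separately in Section~\ref{bsconesection}). The two genuinely hard ingredients---characteristic-free pure resolutions for every degree sequence and the facet description of the cone via the nonnegative pairing with cohomology tables---are exactly the ones you also import from \cite{es}, so your sketch adds only the standard bookkeeping (Herzog--K\"uhl linearity, termination, and the simplicial-fan uniqueness) that Eisenbud and Schreyer themselves carry out.
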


\begin{proof} See \cite[\S 7]{es}. \end{proof}

Hence the cone $\Delta$ contains the Betti tables of all finitely
generated Cohen--Macaulay graded $A$-modules of codimension $c$. There
is a simple algorithm for producing the linear combination whose idea
originally appeared in \cite[\S 2.3]{bsconj}. First, define the {\bf
  impurity} $i(\beta)$ of a Betti diagram $\beta$ to be the number of
its nonzero entries minus the number of nonzero columns. So a pure
diagram has impurity 0. Given a Betti diagram $\B$, define $d_i =
\min\{j \mid \B_{i,j} \ne 0\}$ for $i=0,\dots,c$. This is the {\bf top
  degree sequence} of $\B$. The {\bf bottom degree sequence} can be
defined by replacing min with max. Let $D$ be a pure Betti diagram of
degree sequence $d = (d_0, \dots, d_c)$: the Herzog--K\"uhl equations
\cite[Theorem 1]{hk} state that if $\B(M)$ is a pure Betti table of
degree $d$, then one has
\begin{align} \label{herzogkuhl} 
  \B(M)_{i,d_i} = (-1)^{i+1} q \prod_{j \not\in \{0, i\}} \frac{d_j -
    d_0}{d_j - d_i}
\end{align}
for some positive rational number $q$ if and only if $M$ is
Cohen--Macaulay. Pick this rational multiple to be minimal with
respect to the property of $D$ having integral entries (this might not
be the Betti diagram of a module), and let $r$ be the largest rational
number such that $\B' = \B - rD$ has non-negative entries. Then from
Theorem~\ref{bscone}, $i(\B') < i(\B)$, so we repeat the process,
which must terminate after finitely many steps.

Note that our choice of $D$ at each stage ensures that the degree
sequences of the pure diagrams used in the resulting linear
combination which expresses $\B$ will form a chain in
$\Pi$. Example~\ref{countersimplicial} will show that the equivariant
analogue of this algorithm does not hold.

\subsection{An equivariant conjecture.} \label{conjecturesection}

In this section, we study the Betti tables of equivariant
Cohen--Macaulay modules. The support of an equivariant module is also
equivariant, hence must either be all of $\Spec A$, or the homogeneous
maximal ideal $(x_1, \dots, x_n)$. We conclude that a Cohen--Macaulay
equivariant $A$-module is either free or has finite length.

Given an equivariant graded free resolution of a finite length
(codimension $n$) Cohen--Macaulay $A$-module $M$, we wish to
categorify the expression of $\B(M)$ as a linear combination of pure
free diagrams. Without loss of generality, we will henceforth assume
that $M$ is a polynomial representation. In particular, by clearing
denominators and taking high enough multiples, we can replace the pure
free diagrams by the equivariant pure free diagrams of
Section~\ref{purefreesection}, and the integer coefficients $r$ will
be replaced by Schur positive symmetric functions (the characters of
some $\GL(V)$-representation). To get an equivariant Betti diagram out
of an equivariant module $M$, we let $\B(M)_{i,j}$ be the character of
the minimal generating representations in degree $j$ of the $i$th
syzygy module of an equivariant graded minimal free resolution of
$M$, as described in the introduction.

The equivariant Betti diagram follows the usual convention of Betti
diagrams, namely that the $i$th column and $j$th row contains
$\B(M)_{i,j-i}$. We will say that $\B(M)$ is {\bf pure} if each column
contains at most one nonzero entry, just as in the non-equivariant
setting.

In order to make things precise, first define $\SQ = \SQ(n)$ to be the
quotient field of the ring of symmetric functions $\Z[x_1, \dots,
x_n]^{\mathfrak{S}_n}$. Every element of $\SQ$ can be written as $A/B$
where $A$ and $B$ are symmetric functions. Supposing that one can
write $A = \sum_\lambda a_\lambda s_\lambda$ and $B = \sum_\lambda
b_\lambda s_\lambda$ such that $a_\lambda \ge 0$ and $b_\lambda \ge 0$
for all $\lambda$, we say that $A/B$ is {\bf Schur positive}, and also
write $A/B \ge_s 0$ and $A/B \in \SQ_{\ge 0}$. Note that the product
of two Schur positive fractions is still Schur positive. This notion
of Schur positive fractions seems to be the correct replacement for
positive rational numbers in the equivariant setting.

The equivariant graded Betti tables live in the $\SQ$-vector space
$\SB = \bigoplus_{-\infty}^\infty \SQ^{n+1}$, where we think of the
elements of this vector space as tables with $n+1$ columns and
infinitely many rows. If $\underline{d} \le \ol{d}$ are two degree
sequences, let $\SB_{\underline{d}, \ol{d}}$ be the finite dimensional
subspace of $\SB$ consisting of those tables whose nonzero entries lie
within $[\underline{d}, \ol{d}]$. We are interested in the ``cone''
$C$ whose generators are the pure equivariant resolutions of
Section~\ref{purefreesection}. By a cone with generators, we mean the
set of finite linear combinations of the generators using Schur
positive coefficients. While $C$ is not a cone in the usual sense, $C$
is convex when we consider $\SB$ as a $\Q$-vector space. Given a set
of elements $S \subseteq \SB$, we write $\SQ_{\ge 0}S$ to denote the
set of finite Schur positive linear combinations of elements of $S$.

\begin{problem}[Weak version] \label{weakconj} Let $K$ be a field of
  characteristic 0. Let $M$ be an equivariant Cohen--Macaulay graded
  $A$-module of finite length. Is it true that $\B(M)$ can be written
  as a Schur positive linear combination of Betti tables of pure
  Cohen--Macaulay modules of finite length?
\end{problem}

It is not too hard to answer Problem~\ref{weakconj} affirmatively in
the case when the impurity is concentrated in one column (see
Proposition~\ref{simplicialcase}). If Problem~\ref{weakconj} can be
answered affirmatively in the general case, then we can further ask if
the equalities hold on the level of complexes.

\begin{problem}[Strong version] \label{strongconj} Let $M$ be a finite
  length equivariant $A$-module with equivariant minimal free
  resolution ${\bf F}_\bullet$. Does there exist degree sequences
  $d^1, \dots, d^r$ and representations $W, W_1, \dots, W_r$ such that
  $W \otimes {\bf F}_\bullet$ has a filtration of subcomplexes whose
  associated graded is isomorphic to
  \[
  \bigoplus_{i=1}^r W_i \otimes {\bf F}(d^i)_\bullet,
  \]
  where the ${\bf F}(d^i)_\bullet$ is the pure resolution of degree
  $d^i$ described in Section~\ref{purefreesection}, and the
  isomorphism is of equivariant complexes? Can we write $W \otimes
  {\bf F}_\bullet$ as a direct sum of subcomplexes instead of having
  to pass to an associated graded?
\end{problem}

Problem~\ref{strongconj} is false in the non-equivariant case as the
next example shows.

\begin{eg} Let $A = K[x,y]$ and let $M = A / (x,y^2)$. Then $M$ is
  Cohen--Macaulay and has Betti table $\left( \begin{array}{ccc} 1 & 1
      & - \\ - & 1 & 1 \end{array} \right)$. For any positive integer
  $n$, we cannot find a filtration $0 \to N \to M^{\oplus n} \to
  M^{\oplus n} / N \to 0$ of $M^{\oplus n}$ such that each piece has a
  pure free resolution because $x$ would annihilate both $N$ and
  $M^{\oplus n} / N$, which means that the middle entry of the first
  row of their Betti tables must be nonzero.
\end{eg}

We indicate some facts which may be of use in trying to answer
Problem~\ref{weakconj} affirmatively. However, we first point out a
fact which makes finding a counterexample particularly difficult.

\begin{proposition} \label{monomialpositive} If $A$ is any weight
  positive (positive in the monomial symmetric function basis)
  symmetric function, then there exists a Schur polynomial $s_\lambda$
  such that $As_\lambda$ is Schur positive.
\end{proposition}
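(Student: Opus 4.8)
The plan is to reduce immediately to a single monomial symmetric function and to produce a ``very dominant'' Schur polynomial, depending only on the largest part occurring in $A$, against which every $m_\mu$ becomes visibly Schur positive. Write $A = \sum_\mu a_\mu m_\mu$ as a finite sum over partitions $\mu$ with at most $n$ parts, where $a_\mu \ge 0$, and set $M = \max\{\mu_1 \mid a_\mu \ne 0\}$; if $M = 0$ then $A$ is a nonnegative constant and $\lambda = \emptyset$ works, so assume $M \ge 1$. Put $\rho = (n-1, n-2, \dots, 1, 0)$ and take
\[
\lambda = \bigl((n-1)M,\ (n-2)M,\ \dots,\ M,\ 0\bigr),
\]
so that $\lambda_i - \lambda_{i+1} = M$ for $1 \le i \le n-1$. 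Since $A \cdot s_\lambda = \sum_\mu a_\mu\,(m_\mu \cdot s_\lambda)$ with all $a_\mu \ge 0$, it suffices to show that $m_\mu \cdot s_\lambda$ is Schur positive whenever $\mu_1 \le M$.

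To this end I would work with the unnormalized orbit sum $\widetilde m_\mu = \sum_{w \in \SS_n} x^{w(\mu)}$, padding $\mu$ with zeros to length $n$ and letting $\SS_n$ permute coordinates of exponent vectors; it satisfies $\widetilde m_\mu = |\operatorname{Stab}_{\SS_n}(\mu)| \cdot m_\mu$. Using the bialternant formula \eqref{weylcharacterformula}, write $s_\lambda = a_{\lambda + \rho}/a_\rho$ where $a_\gamma := \det\bigl(x_j^{\gamma_i}\bigr)_{i,j=1}^n = \sum_{v \in \SS_n} \operatorname{sgn}(v)\, x^{v(\gamma)}$ and $a_\rho = \det\bigl(x_j^{n-i}\bigr)_{i,j=1}^n$. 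Expanding the product and substituting $w \mapsto v w$ in the inner sum (using that $\SS_n$ acts linearly on exponent vectors) yields
\[
\widetilde m_\mu \cdot a_{\lambda + \rho} = \sum_{w, v \in \SS_n} \operatorname{sgn}(v)\, x^{\,v(w(\mu) + \lambda + \rho)} = \sum_{w \in \SS_n} a_{\,w(\mu) + \lambda + \rho},
\]
and hence $\widetilde m_\mu \cdot s_\lambda = \sum_{w \in \SS_n} a_{\,w(\mu) + \lambda + \rho}\big/ a_\rho$.

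Now comes the key observation: for every $w \in \SS_n$ the integer vector $w(\mu) + \lambda + \rho$ is \emph{strictly} decreasing, because its consecutive differences equal $\bigl(w(\mu)_i - w(\mu)_{i+1}\bigr) + (M + 1) \ge -\mu_1 + M + 1 \ge 1$. Consequently no straightening or cancellation takes place: $a_{\,w(\mu) + \lambda + \rho}/a_\rho = s_{\,w(\mu) + \lambda}$, and $w(\mu) + \lambda$ is an honest partition with at most $n$ parts (its consecutive differences are $\ge M - \mu_1 \ge 0$ and its entries are nonnegative). Therefore
\[
m_\mu \cdot s_\lambda = \sum_{\nu}\, s_{\,\lambda + \nu},
\]
the sum running over the distinct rearrangements $\nu$ of $\mu$ (padded to length $n$), which is a sum of Schur polynomials with nonnegative coefficients; summing over $\mu$ with weights $a_\mu \ge 0$ then shows $A \cdot s_\lambda$ is Schur positive. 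I expect the only point needing a little care to be the verification that a strictly decreasing exponent vector makes $a_{\,w(\mu)+\lambda+\rho}/a_\rho$ an unsigned, nonzero Schur polynomial — i.e., that the rows of the defining determinant are already correctly sorted — but no genuine obstacle arises: the whole argument is elementary once $\lambda$ is chosen with ``gaps'' exceeding the largest part occurring in $A$.
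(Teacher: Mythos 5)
Your proof is correct and follows essentially the same route as the paper: multiply the orbit sums by the alternant $a_{\lambda+\rho}$, reindex over the Weyl group, and choose $\lambda$ so dominant that every $\lambda + w(\mu)$ is already a partition, so no straightening or signs appear. Your explicit choice $\lambda = ((n-1)M,\dots,M,0)$ with gaps $M$ is just a concrete instance of the paper's final step (the paper additionally records the general signed expansion of $s_\lambda$ times an orbit sum, which you harmlessly bypass).
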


\begin{proof} Given two Schur polynomials $s_\lambda$ and $s_\mu$, let
  $W_1, \dots, W_N$ be the weights of $\mu$. Then for each $\lambda +
  W_i$, there either exists a nonidentity $\sigma \in \SS_n$ such that
  $\sigma^\bullet(\lambda + W_i) = \lambda + W_i$, or there exists a
  unique $\sigma_i$ such that $\sigma^\bullet(\lambda + W_i) =
  \lambda^i$ is a dominant weight (a partition). Recall that
  $\sigma^\bullet(\lambda + W_i)$ is defined to be $\sigma(\lambda +
  W_i + \rho) - \rho$ where $\rho = (n-1, n-2, \dots, 1, 0)$. In the
  second case, we say that $\lambda + W_i$ is nondegenerate, and we
  claim that $s_\lambda s_\mu = \sum_i (-1)^{\ell(\sigma_i)}
  s_{\lambda^i}$, the sum over $i$ such that $\lambda + W_i$ is
  nondegenerate. First, define $a_\gamma = \det(x_j^{\gamma_i + n -
    i})_{i,j=1}^n$ for all $\gamma \in {\bf N}^n$. For a weight $W$,
  set $o_W = m_W^{-1} \sum_{\tau \in \SS_n^W} x^{\tau(W)}$ where
  $x^{W_i} = x_1^{W_i(1)} \cdots x_n^{W_i(n)}$ and $m_W$ is the
  integer needed so that the coefficients in $o_W$ are 1. Hence we
  have $s_\mu = \sum_W c_W o_W$ for some coefficients $c_W$. The equation
  \begin{align*}
    s_\lambda o_W &= \frac{a_{\lambda + \rho} o_W}{a_\rho} &
    (\text{by } \eqref{weylcharacterformula}) \\
    &= a_\rho^{-1} m_W^{-1} \sum_{\sigma \in \SS_n}
    (-1)^{\ell(\sigma)} x^{\sigma(\lambda + \rho)} \sum_{\tau \in
      \SS_n} x^{\tau(W)}\\ 
    &= a_\rho^{-1} m_W^{-1} \sum_{\sigma, \tau' \in \SS_n}
    (-1)^{\ell(\sigma)} x^{\sigma(\lambda + \rho + \tau'(W))} &
    (\text{setting } \tau' = \sigma^{-1}\tau)\\
    &= a_\rho^{-1} m_W^{-1} \sum_{\tau \in \SS_n}
    (-1)^{\ell(\sigma_\tau)} a_{\sigma_\tau(\lambda + \rho + \tau(W))}
  \end{align*}
  is valid for arbitrary choices of $\sigma_\tau \in \SS_n$. We have
  that $\lambda + \tau(W)$ is degenerate (say corresponding to the
  permutation $\sigma_\tau$) if and only if the determinant
  $a_{\sigma_\tau(\lambda + \rho + \tau(W))}$ is 0 since this
  corresponds to the matrix having repeated rows. Hence only
  nondegenerate weights contribute to the sum $s_\lambda \sum_W c_W
  o_W$, and in the nondegenerate case, one has $a_{\sigma_\tau(\lambda
    + \rho + \tau(W))} / a_\rho = s_{\sigma^\bullet_\tau(\lambda +
    \tau(W))}$ assuming that $\sigma_\tau$ has been chosen so that the
  subscript of $s$ is a partition. This proves the claim.

  In our case, we can choose $\lambda$ such that for every weight $W$
  appearing in $A$, we have that $\lambda + W$ is dominant, and hence
  if $A = \sum_i x^{W_i}$, then $As_\lambda = \sum_i s_{\lambda +
    W_i}$.
\end{proof}

In particular, if $A/B = A'/B'$ and $A'$ and $B'$ are Schur positive
symmetric functions, then it is not necessarily the case that $A$ and
$B$ are both Schur positive nor that both $-A$ and $-B$ are both Schur
positive, as the next example shows.

\begin{eg} Let $n=2$. Then $s_4 - s_{3,1}$ is not Schur positive as a
  symmetric function, but it is equal to $s_3(s_4 - s_{3,1}) / s_3 =
  s_7 / s_3$, which is Schur positive in our sense. In this case, $s_4
  - s_{3,1} = x_1^4 + x_2^4$ in the monomial symmetric function
  basis. 
\end{eg}

Furthermore, Proposition~\ref{monomialpositive} is not a necessary
condition. 

\begin{eg} Let $n=2$. Then $s_4 - s_{3,1} - s_{2,2} = x_1^4 -
  x_1^2x_2^2 + x_2^4$ is not positive in the monomial symmetric
  function basis, but the identity
  \[
  \frac{s_5^3(s_4 - s_{3,1} - s_{2,2})}{s_5^3} = \frac{s_{19} +2
    s_{18,1}+2 s_{17,2}+2 s_{16,3}+3 s_{15,4}+4 s_{14,5}+2
    s_{13,6}+s_{11,8}+2 s_{10,9}}{s_5^3}
  \]
  holds, so $x_1^4 - x_1^2x_2^2 + x_2^4 \in \SQ_{\ge 0}$.
\end{eg}

However, there do exist simple necessary conditions. For one thing, a
symmetric function which is Schur positive in our sense must be
positive when we do the substitution $x_1 = x_2 = \cdots = 1$. Less
trivially, if we partially order the weights of a symmetric function
by dominance order ($\lambda$ is said to {\bf dominate} $\mu$ if
$\lambda_1 + \cdots + \lambda_i \ge \mu_1 + \cdots + \mu_i$ for all
$i$), those monomials with maximal weights must have a positive
coefficient.


We mention a criterion for determining if a symmetric function is
equal to a Schur positive fraction. However, this condition seems
difficult to check for large examples. We need some notation. Let $f$
be a polynomial in $d$ variables. Write $f = \sum_I c_I x^I$ and let
${\rm Log}(f) = \{I \mid c_I \ne 0\} \subset \N^d$. We define ${\rm
  conv}({\rm Log}(f))$ to be the convex hull of this set. Given a face
$F$ of this polytope, let $f_F = \sum_{I \in F} c_I x^I$.

\begin{proposition} Let $f$ be a symmetric function in $d$
  variables. Then $f$ is a Schur positive fraction if and only if
  $f_F(r_1, \dots, r_d) > 0$ for all positive real numbers $r_1,
  \dots, r_d$ and all faces $F$ of ${\rm conv}({\rm Log}(f))$.
\end{proposition}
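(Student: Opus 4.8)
The plan is to characterize Schur positive fractions via a limiting/scaling argument on the support polytope. The key observation is that multiplying a symmetric function $f$ by a symmetric function $B$ which is a high power of $s_{(N)} = h_N$ (or more generally a product of power sums) has a predictable effect on the Newton polytope: $\mathrm{conv}(\mathrm{Log}(s_{(N)}^k)) $ is a dilated simplex, and for $k$ large the polytope $\mathrm{conv}(\mathrm{Log}(f \cdot s_{(N)}^k))$ is essentially this large simplex translated by $\mathrm{conv}(\mathrm{Log}(f))$, so that near each vertex it looks locally like a translate of a face of $\mathrm{conv}(\mathrm{Log}(f))$. Combined with Proposition~\ref{monomialpositive} (which lets us shift a weight-positive function into Schur positivity), the strategy is: $f$ is a Schur positive fraction iff some $f \cdot B$ with $B$ Schur positive is itself Schur positive, and by the remarks after Proposition~\ref{monomialpositive} this forces positivity of the ``extremal'' (dominance-maximal, i.e. vertex) monomials, which localizes to the face condition.

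First I would establish the easy direction: if $f = A'/B'$ with $A', B'$ Schur positive, then write $f B' = A'$ and examine $\mathrm{conv}(\mathrm{Log}(A'))$; since $A'$ is a nonnegative combination of Schur polynomials and every Schur polynomial $s_\lambda$ has the property that $s_{\lambda,F}(r_1,\dots,r_d) > 0$ for every face $F$ and all positive reals (the monomials of $s_\lambda$ in each variable are nonnegative and the vertices are permutations of $\lambda + $ content shifts), one deduces $A'_{F'} > 0$ on the positive orthant for every face $F'$. Then I would transfer this back to $f$: a face $F$ of $\mathrm{conv}(\mathrm{Log}(f))$ corresponds, under the Newton-polytope additivity $\mathrm{conv}(\mathrm{Log}(fB')) = \mathrm{conv}(\mathrm{Log}(f)) + \mathrm{conv}(\mathrm{Log}(B'))$, to a Minkowski-sum face $F + G$, and $(fB')_{F+G} = f_F \cdot B'_G$; positivity of the left side and of $B'_G$ forces $f_F(r) > 0$. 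This gives necessity.

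For sufficiency — the harder direction — I would take $f$ satisfying the face-positivity condition and try to produce an explicit Schur positive multiple. The idea is to multiply by $s_{(N)}^k = (x_1 + \dots + x_d)^{Nk}$-type factors (or the elementary/complete symmetric function powers whose Newton polytope is a big simplex) so that each monomial $x^I$ of $f$ gets ``surrounded'' by enough positive mass. Concretely, using the Jacobi–Trudi / bialternant computation from the proof of Proposition~\ref{monomialpositive}, multiplying by $s_\lambda$ for $\lambda$ dominating all weights of the relevant shifts turns $f s_\lambda$ into a genuine alternating sum $\sum_i \pm s_{\mu^i}$, and the face-positivity of $f$ is precisely what guarantees that after enough such multiplications the signs all become $+$: the negative contributions are dominated, coefficient by coefficient, by contributions from interior lattice points of the dilated simplex. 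The main obstacle is making this domination argument rigorous and uniform — i.e., showing that a single explicit Schur positive $B$ (a power of a fixed symmetric function depending only on the support of $f$ and its coefficients) clears \emph{all} negativity at once. This requires a quantitative version of ``a positive polynomial on the positive orthant, times a high power of $(\sum x_i)$, has all nonnegative coefficients'' (a Polya–Handelman-type positivstellensatz for the simplex), applied face-by-face and then globalized; the reference to Handelman in the acknowledgements suggests this is exactly the tool to invoke. I would cite the Handelman/Polya result for the simplex, verify the face-positivity hypothesis matches its hypotheses after homogenizing $f$, and conclude that $f \cdot s_{(N)}^k$ is a nonnegative-coefficient (hence monomial-positive) symmetric function for $k \gg 0$, whence Schur positive after one more application of Proposition~\ref{monomialpositive}.
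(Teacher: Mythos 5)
Your necessity direction is fine, and in fact more explicit than the paper's (which leaves that half implicit in the ``only if'' part of Handelman's theorem): Newton polytopes add under multiplication (no cancellation can occur at vertices), a face of the Minkowski sum exposed by a functional $\ell$ is the sum of the faces of the two factors exposed by $\ell$, and facial parts multiply, so $(fB')_{F+G}=f_F\,B'_G$ is positive on the open orthant and $B'_G$ is too, forcing $f_F>0$.

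The sufficiency direction, however, has a genuine gap: the multipliers you propose provably cannot work. If $g$ is any polynomial whose Newton polytope is a dilated standard simplex --- this includes $(x_1+\cdots+x_d)^M$, $e_1^M$, and $s_{(N)}^k=h_N^k$ --- then for a functional $\ell$ that exposes a face $F$ of ${\rm conv}({\rm Log}(f))$ but only a single vertex of the simplex, the facial part of $fg$ along the corresponding face is a monomial times $f_F$; since facial parts of a polynomial with nonnegative coefficients again have nonnegative coefficients, $fg$ inherits every negative coefficient of $f_F$ no matter how large the power. Concretely, take $d=3$ and $f=(x_1^2x_2^2+x_1^2x_3^2+x_2^2x_3^2)-\tfrac{1}{2}(x_1^2x_2x_3+x_1x_2^2x_3+x_1x_2x_3^2)$. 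This $f$ satisfies the face condition (on the orthant $f\ge\tfrac{1}{2}(x_1^2x_2^2+x_1^2x_3^2+x_2^2x_3^2)$, the edge exposed by maximizing the $x_1$-exponent gives $x_1^2(x_2^2-\tfrac{1}{2}x_2x_3+x_3^2)>0$, and the vertices are single monomials), yet the coefficient of $x_1^{Nk+2}x_2x_3$ in $f\cdot s_{(N)}^k$ is exactly $-\tfrac{1}{2}$ for every $N$ and $k$, because the only way to reach that exponent is the vertex monomial $x_1^{Nk}$ of $s_{(N)}^k$ times the term $-\tfrac{1}{2}x_1^2x_2x_3$ of $f$. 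So a P\'olya-type positivstellensatz for the simplex (which requires strict positivity on the \emph{closed} simplex, a strictly stronger hypothesis) is not the right tool, and the ``domination by interior lattice points'' you hope for cannot be made rigorous: the bad coefficients sit on faces of the product polytope, where nothing can dominate them. What is needed is Handelman's theorem exactly in the form the paper cites: under the face condition there exists \emph{some} positive-coefficient polynomial $g$, adapted to the face structure of ${\rm conv}({\rm Log}(f))$, neither symmetric nor a power of a fixed polynomial in general, with $gf$ having nonnegative coefficients. Your outline also skips the resulting symmetrization issue; the paper resolves it by replacing $g$ with $\prod_{\sigma\in\mathfrak{S}_d}\sigma(g)$, which is symmetric with nonnegative coefficients, observing that $f\cdot\prod_{\sigma}\sigma(g)=(fg)\prod_{\sigma\ne 1}\sigma(g)$ is a symmetric, monomial-nonnegative polynomial, and only then invoking Proposition~\ref{monomialpositive} to turn numerator and denominator into Schur positive symmetric functions.
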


\begin{proof} In \cite[V.6]{handelman}, Handelman shows that for any
  $f$ (not necessarily symmetric), there exists a polynomial $g$ with
  positive coefficients such that $gf$ has positive coefficients if
  and only if $f_F(r_1, \dots, r_d) > 0$ for all positive real numbers
  $r_1, \dots, r_d$ for each face $F$ of ${\rm conv}({\rm Log}(f))$.

  Now let $f$ be a symmetric function which satisfies the hypotheses
  of the theorem. Then we can find some $g$ such that $gf$ has
  positive coefficients. We also have the identity
  \[
  f = \frac{1}{d!} \sum_{\sigma \in \mathfrak{S}_d} \frac{\sigma(g)
    f}{\sigma(g)} = \frac{1}{d!} \frac{h}{\prod_{\sigma \in
      \mathfrak{S}_d} \sigma(g)}
  \]
  for some polynomial $h$. Since $\prod_\sigma \sigma(g)$ is
  symmetric, we conclude that the same is true for $f \prod_\sigma
  \sigma(g)$, and hence $h$ is symmetric. In particular, we expressed
  $f$ as a quotient of two monomial positive symmetric
  functions. Hence we know that $f$ is a quotient of two Schur
  positive symmetric functions by Proposition~\ref{monomialpositive}.
\end{proof}

Now we give the setup for an equivariant version of Boij--S\"oderberg
cones.

First, $C$ lives in a proper subspace of $\SB$. In order to describe
this subspace, let $S_M(d) = \ch(M_d)$ be the {\bf equivariant Hilbert
  function} of $M$, and let $H_M(t) = \sum_{d \ge 0} S_M(d) t^d \in
\SQ_{\ge 0}[[t]]$ be the {\bf equivariant Hilbert series} of $M$. Then
we have
\[
H_{A(-j)}(t) = t^j \sum_{d \ge 0} s_d t^d = \frac{t^j}{(1-x_1t) \cdots
  (1-x_nt)}
\]
as elements of $\SQ_{\ge 0}[[t]]$. We can write an equivariant
resolution ${\bf F}_\bullet$ for $M$:
\[
0 \to \bigoplus_j (A(-j) \otimes \B_{n,j}) \to \cdots \to
\bigoplus_j (A(-j) \otimes \B_{0,j}) \to M \to 0,
\]
where $\B_{i,j}$ is notation for the corresponding representation
with that character. Since this resolution has degree 0 maps, and the
(equivariant) Hilbert function is an additive function on degree 0
exact sequences, we get
\[
H_M(t) = \sum_{i=0}^n (-1)^i H_{F_i}(t) = \frac{\sum_{i=0}^n \sum_j
  (-1)^i \B_{i,j} t^j}{(1-x_1t) \cdots (1-x_nt)}.
\]
Knowing that $M$ is of finite length, $H_M(t)$ must be a polynomial
living in $\SQ_{\ge 0}[t]$, and hence the numerator $\sum_{i=0}^n
\sum_j (-1)^i \B_{i,j} t^j$ is divisible by $(1-x_1t) \cdots
(1-x_nt)$. In particular, we conclude that
\begin{align} \label{equivariantcmequations} \sum_{i=0}^n \sum_j
  (-1)^i \B_{i,j} x_k^{-j} = 0 \quad \text{ for } k=1,\dots,n,
\end{align}
which gives $n$ linearly independent equations. The equivariant Betti
diagrams live in the $\SQ$-subspace defined by these equations. A
simple dimension count allows us to conclude the following.

\begin{proposition} \label{simplicialcase} If $\B(M)$ is an
  equivariant Betti table of a finite length module $M$ which is pure
  in all degrees except possibly one, then $\B(M)$ is a Schur positive
  linear combination of pure Betti tables.
\end{proposition}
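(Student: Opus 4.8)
The plan is to argue by dimension count, reducing the problem to a single column where purity fails. Suppose $\B(M)$ is pure in every homological degree except column $k$. In column $k$, say the nonzero entries occur in rows (i.e.\ internal degrees) $j_1 < j_2 < \cdots < j_m$, with characters $\B(M)_{k,j_1}, \dots, \B(M)_{k,j_m}$. Outside of column $k$, there is a unique degree $d_i$ for each $i$ with $\B(M)_{i,d_i} \neq 0$, and $\B(M)_{i,j} = 0$ for $j \neq d_i$. For each $t = 1, \dots, m$ we form the degree sequence $d^{(t)} = (d_0, d_1, \dots, d_{k-1}, j_t, d_{k+1}, \dots, d_n)$; since the degree sequence of a minimal resolution is strictly increasing, each $d^{(t)}$ is a legitimate strictly increasing degree sequence, and they differ only in the $k$th entry, forming a chain in $\Pi$. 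Let ${\bf F}(d^{(t)})_\bullet$ be the equivariant pure resolution of degree $d^{(t)}$ constructed in Section~\ref{purefreesection} (rescaled so all entries are honest characters of $\GL(V)$-representations), and let $B^{(t)}$ be its equivariant Betti table.

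Next I would set up the linear system. I want to find Schur-positive symmetric functions $c_1, \dots, c_m \in \SQ_{\ge 0}$ such that
\[
\B(M) = \sum_{t=1}^m c_t B^{(t)}.
\]
Comparing entries: in every homological degree $i \neq k$, the tables $\B(M)$ and each $B^{(t)}$ have their unique nonzero entry in the same row $d_i$, so the equation there reads $\B(M)_{i,d_i} = \big(\sum_t c_t\big) \cdot (B^{(t)}_{i,d_i})$ — but here one must be careful, since the $B^{(t)}$ may not all have the \emph{same} character in degree $i$; this is where the equations \eqref{equivariantcmequations} enter. The point is that the equivariant Herzog--K\"uhl-type relations \eqref{equivariantcmequations} cut out an $\SQ$-subspace of $\SB$ whose dimension matches the number of free parameters: a finite-length equivariant Betti table supported in degrees $[\underline d, \ol d]$ that is pure away from column $k$ is, after fixing the ``shape'' (which degrees are occupied), determined up to the $m$ scalars $c_1, \dots, c_m$ in column $k$, because the $n$ equations \eqref{equivariantcmequations} together with the known vanishing pattern pin down all other entries as $\SQ$-linear functions of those $m$ characters. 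So $\B(M)$ and $\sum_t c_t B^{(t)}$ agree as soon as their column-$k$ entries agree, i.e.\ as soon as the $c_t$ solve the (triangular, by the ordering $j_1 < \cdots < j_m$ of degrees) system in column $k$.

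Finally I would solve that system and check positivity. Because the degree sequences $d^{(t)}$ form a chain and the resolutions ${\bf F}(d^{(t)})_\bullet$ are pure, the column-$k$ contributions can be peeled off greedily starting from the smallest degree $j_1$ (exactly as in the non-equivariant Boij--S\"oderberg algorithm in Section~\ref{bsconesection}): $c_1$ is determined so that $c_1 B^{(1)}_{k,j_1} = \B(M)_{k,j_1}$, i.e.\ $c_1 = \B(M)_{k,j_1} / B^{(1)}_{k,j_1}$, then subtract $c_1 B^{(1)}$ and repeat. Each $c_t$ is thus a ratio of characters of $\GL(V)$-representations, hence lies in $\SQ_{\ge 0}$ by definition of Schur-positive fraction. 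The main obstacle — and the reason this only works when the impurity is confined to one column — is ensuring the greedy subtraction leaves \emph{nonnegative} (Schur-positive) entries at each stage: after subtracting $c_t B^{(t)}$, the column-$k$ entry in row $j_t$ becomes zero and one needs the entry in row $j_{t+1}$ (and beyond) to stay a genuine character. When all impurity is in one column this is automatic because the quantities being compared in every other column are forced to match by \eqref{equivariantcmequations}, so nothing outside column $k$ can go negative; in the general multi-column case this coherence fails, which is precisely the phenomenon exhibited in Example~\ref{countersimplicial}. I would close by remarking that this argument does not produce, and does not claim, uniqueness of the decomposition in the equivariant world.
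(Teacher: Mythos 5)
Your argument is correct and is essentially the paper's own proof: both rest on the dimension count coming from the equivariant Herzog--K\"uhl equations \eqref{equivariantcmequations}, which pins down a table with the given support pattern once its column-$k$ entries are known, and then each coefficient is read off from column $k$ as the ratio $\B(M)_{k,j}/\B(j)_{k,j} \in \SQ_{\ge 0}$. The only cosmetic differences are that the paper takes pure tables for every degree $j \in [\ul{d}_k, \ol{d}_k]$ (with zero coefficients on unoccupied rows) while you use only the occupied rows, and your ``greedy peeling'' framing adds nothing beyond this, since the tables $B^{(t)}$ have disjoint supports in column $k$.
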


\begin{proof} Set $\ul{d}_i = \min \{ j \mid \B(M)_{i,j} \ne 0 \}$ and
  $\ol{d}_i = \max \{ j \mid \B(M)_{i,j} \ne 0 \}$. By our assumption,
  $\ul{d}$ and $\ol{d}$ agree except in at most one coordinate, call
  this coordinate $k$ if it exists. If it does not, then $M$ is pure,
  and there is nothing to show. Otherwise, $\dim \SB_{\ul{d}, \ol{d}}
  = n + 1 + \ol{d}_k - \ul{d}_k$, and the subspace cut out by the
  equivariant Herzog--K\"uhl equations \eqref{equivariantcmequations}
  has codimension $n$. Furthermore, we have $1 + \ol{d}_k - \ul{d}_k$
  linearly independent equivariant pure Betti tables $\B(j)$ coming
  from the degree sequences $d(j)$ for $\ul{d}_k \le j \le \ol{d}_k$
  which are defined by $d(j)_i = \ul{d}_i$ for $i \ne k$ and $d(j)_k =
  j$ otherwise. Hence, $\B(M)$ must be a linear combination $\sum_j
  c_j \B(j)$ of these Betti tables. By comparing which coefficients
  are zero or nonzero, we immediately get $c_j = \B(M)_{k,j} /
  \B(j)_{k,j} \in \SQ_{\ge 0}$.
\end{proof}

\begin{corollary} Every pure equivariant Betti table of a finite
  length module is a Schur positive scalar multiple of a Betti table
  arising from Theorem~\ref{equivariantres}.
\end{corollary}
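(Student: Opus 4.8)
The plan is to combine the classification of finite-length pure equivariant Betti tables via the equivariant Herzog--K\"uhl equations \eqref{equivariantcmequations} with the explicit construction of Theorem~\ref{equivariantres}. Fix a finite-length module $M$ with pure equivariant Betti table, and let $d = (d_0, d_1, \dots, d_n)$ be its degree sequence (the $i$th column has its unique nonzero entry in row $d_i$, so $d_0 < d_1 < \cdots < d_n$). By purity, the equivariant Betti table is determined by the single representation $\B(M)_{i,d_i}$ in each homological degree, so the statement amounts to showing that for each $i$, $\B(M)_{i,d_i}$ is a Schur positive scalar multiple of the corresponding representation $\Sc_{\alpha(d,i)} V$ appearing in the pure resolution ${\bf F}(d)_\bullet$ of Theorem~\ref{equivariantres}, where $\alpha$ and the $e_i = d_i - d_{i-1}$ (with the conventions of Section~\ref{purefreesection}) recover a choice of $(\alpha,\beta)$ realizing the degree sequence $d$. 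In fact there is a one-parameter family of such $(\alpha,\beta)$ (choosing how to split $e_1$ between the first and last column, as in the final Remark of Section~\ref{pieriresolutionsection}); I will just pick one, e.g. the one with $\beta/\alpha$ entirely in the first column.

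First I would argue as in the proof of Proposition~\ref{simplicialcase}: the space $\SB_{d,d}$ of equivariant tables supported exactly on the "staircase" positions $(i,d_i)$ is an $(n+1)$-dimensional $\SQ$-vector space, and the $n$ equivariant Herzog--K\"uhl equations \eqref{equivariantcmequations} (obtained by specializing $t \mapsto x_k^{-1}$ in the Hilbert-series identity, using that $H_M(t)$ is a polynomial since $M$ has finite length) cut out a subspace of $\SQ$-dimension exactly $1$ — one needs that these $n$ equations are linearly independent, which follows exactly as in the non-equivariant Herzog--K\"uhl setting (the relevant Vandermonde-type determinant in the $x_k$ is nonzero), and this is noted already in the paragraph preceding Proposition~\ref{simplicialcase}. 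Since Theorem~\ref{equivariantres} produces an honest module $M(d)$ whose equivariant Betti table is pure of degree $d$ and nonzero, that table is a nonzero vector in this $1$-dimensional $\SQ$-subspace. Hence $\B(M) = c \cdot \B(M(d))$ for a unique $c \in \SQ^\times$.

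It remains to check that $c$ is Schur positive, i.e. $c \in \SQ_{\ge 0}$. For this I would read off $c$ from any single homological degree: $c = \B(M)_{0,d_0}\big/\B(M(d))_{0,d_0}$, the ratio of the characters of the two degree-$d_0$ generating representations. Both $\B(M)_{0,d_0}$ and $\B(M(d))_{0,d_0} = s_\alpha$ are characters of genuine $\GL(V)$-representations, hence Schur positive symmetric functions, so their ratio is a Schur positive fraction by the very definition of $\SQ_{\ge 0}$ given in Section~\ref{conjecturesection}. This gives $c \in \SQ_{\ge 0}$ and completes the proof.

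The main obstacle is the linear-independence of the $n$ equations \eqref{equivariantcmequations} — everything else is bookkeeping. I expect this to go through by the same Vandermonde argument that proves the classical Herzog--K\"uhl equations have the stated rank: the coefficient matrix, after the substitution $t = x_k^{-1}$, has rows indexed by $k = 1,\dots,n$ and the relevant minors are products of differences $x_k - x_\ell$ and of $x_k^{d_i}$, which are nonzero in the function field $\SQ$ since $d_0 < \cdots < d_n$. Since the excerpt already asserts (just before Proposition~\ref{simplicialcase}) that \eqref{equivariantcmequations} "gives $n$ linearly independent equations," I am entitled to cite this, so the corollary reduces cleanly to the dimension count plus the Schur-positivity of a ratio of two characters.
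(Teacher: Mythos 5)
Your argument is correct and is essentially the paper's own route: the corollary is obtained there as the pure case of Proposition~\ref{simplicialcase}, whose proof is exactly your dimension count in $\SB_{\ul d,\ol d}$ against the $n$ equivariant Herzog--K\"uhl equations \eqref{equivariantcmequations}, followed by reading off the coefficient as a ratio of entries, $c = \B(M)_{k,j}/\B(j)_{k,j} \in \SQ_{\ge 0}$. Your explicit remarks on the generalized Vandermonde rank and on realizing the degree sequence by a first-column $(\alpha,\beta)$ only spell out details the paper leaves implicit.
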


One could attempt to mimic the proof of Eisenbud and Schreyer to prove
Conjecture~\ref{weakconj}. The main problem seems to be that in the
case of the field $\SQ$, the boundaries of cones are rather
complicated, and so one does not have a nice description of the
exterior facets as in the non-equivariant case.

\subsection{Examples.} \label{equivariantexamples}

We now give some examples which use an equivariant analogue of the
Boij--S\"oderberg algorithm to find a decomposition of Betti
tables. The idea of the algorithm is the same as the one presented in
Section~\ref{bsconesection} except that one uses coefficients in
$\SQ_{\ge 0}$ instead of positive rational numbers. The correctness of
the algorithm in these cases is a consequence of
Proposition~\ref{simplicialcase}. First, we state two propositions
which give some families of identities among Schur polynomials and
then give some examples with actual numbers. In
Example~\ref{countersimplicial}, we will give an example showing that
this algorithm does not always work.

Pick $a > b > 0$. Let $\alpha = (a,b,0)$, $\beta^1 = (a+1,b,0)$, and
$\beta^2 = (a,b+1,0)$. Then the Pieri resolution of the cokernel of
$\beta^1 \oplus \beta^2 \to \alpha$ is
\[
0 \to (a+1,b+1,b+1) \to (a+1,b+1,0) \oplus (a,b+1,b+1) \to (a+1,b,0)
\oplus (a,b+1,0) \to (a,b,0).
\]

\begin{proposition} \label{schurfamily1} We have the following
  equivariant isomorphism of graded Betti tables:
  \begin{align*}
  &(b+1,b+1,0) \otimes 
  \begin{array}{|c|c|c|c|}
    \hline (a,b,0) & (a+1,b,0) \oplus (a,b+1,0) & (a+1,b+1,0) & \\ 
    \hline \vdots & \vdots & \vdots & \vdots \\
    \hline & & (a,b+1,b+1) & (a+1,b+1,b+1) \\
    \hline 
  \end{array} \cong\\
  &(a+1,b+1,0) \otimes 
  \begin{array}{|c|c|c|c|}
    \hline (b,b,0) & (b+1,b,0) & (b+1,b+1,0) & \\ 
    \hline \vdots & \vdots & \vdots & \vdots \\
    \hline & & & (b+1,b+1,b+1) \\
    \hline 
  \end{array}\ \oplus\\
  &(a,b+1,b+1) \otimes 
  \begin{array}{|c|c|c|c|}
    \hline (b,0,0) & (b+1,0,0) & & \\
    \hline \vdots & \vdots & \vdots & \vdots \\
    \hline & & (b+1,b+1,0) & (b+1,b+1,1) \\
    \hline 
  \end{array}\ ,
  \end{align*}
  where the $\vdots$ spans $b-1$ rows of zeroes.

  In particular, we deduce the following three identities:
  \begin{align*}
    (b+1,b+1,0) \otimes (a,b,0) &\cong (a+1,b+1,0) \otimes (b,b,0)\\
    & \quad \oplus (a,b+1,b+1) \otimes (b,0,0)\\
    (b+1,b+1,0) \otimes ((a+1,b,0) \oplus (a,b+1,0)) &\cong
    (a+1,b+1,0) \otimes (b+1,b,0)\\
    &\quad \oplus (a,b+1,b+1) \otimes (b+1,0,0)\\
    (b+1,b+1,0) \otimes (a+1,b+1,b+1) &\cong (a+1,b+1,0) \otimes
    (b+1,b+1,b+1)\\
    &\quad \oplus (a,b+1,b+1) \otimes (b+1,b+1,1).
\end{align*}
\end{proposition}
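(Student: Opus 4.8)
The plan is to check the displayed isomorphism of equivariant Betti tables position by position (here $n=\dim V=3$, forced by the length of the resolutions involved). Both sides are equivariant Betti tables of minimal equivariant resolutions: on the left, $\Sc_{(b+1,b+1,0)}V$ tensored with the Pieri resolution of $\coker(\Sc_{\beta^1}V\oplus\Sc_{\beta^2}V\to\Sc_\alpha V)$ displayed just above the proposition (minimal by Corollary~\ref{singlecolumns}); on the right, two $\GL(V)$-representations each tensored with a pure resolution from Theorem~\ref{equivariantres}, for the degree sequences $(0,1,2,b+3)$ and $(0,1,b+2,b+3)$. Comparing homological and internal degrees shows the only nonzero positions are in homological degrees $0,1,3$ (one internal degree each) and homological degree $2$ (internal degrees $2$ and $b+2$); the two homological-degree-$2$ positions match trivially, since there $\otimes$ is merely commuted. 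Thus the whole statement reduces to the three displayed identities, which are exactly the homological-degree-$0$, $1$, and $3$ entries.

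To prove these I would pass to symmetric powers via the $\GL_3$ dictionary $\bigwedge^2 V\cong V^*\otimes\bigwedge^3 V$. This gives $\Sc_{(m,m,0)}V\cong\Sym^m(V^*)\otimes(\bigwedge^3 V)^{\otimes m}$, and together with $\Sc_{(c,0,0)}V=\Sym^c V$ and $\Sc_{(a,b+1,b+1)}V\cong\Sym^{a-b-1}(V)\otimes(\bigwedge^3 V)^{\otimes(b+1)}$ (valid since $a>b$) it turns the first and third identities into equalities between $(\bigwedge^3 V)$-twists of modules of the form $\Sym^p(V^*)\otimes\Sym^q(V)$ and $\Sym^p(V)\otimes\Sym^q(V)$. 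Such products decompose completely: $\Sym^q V\otimes\Sc_\lambda V$ by Pieri's formula, and $\Sym^p(V^*)\otimes\Sc_\lambda V$ by the ``co-Pieri'' rule (Pieri for $V^*$ after untwisting by $\bigwedge^3 V$), so that each side becomes an explicit multiset of partitions indexed by the lattice points of an interval. These multisets are then matched by elementary reindexings: for the first identity, the shift $(\mu_1,\mu_2)\mapsto(\mu_1+1,\mu_2+1)$ pairs one summand on the right with the part of the left where $\mu_1+\mu_2\ge a$, leaving the locus $\mu_1+\mu_2=a-1$ as the other summand $\Sc_{(a,b+1,b+1)}V\otimes\Sc_{(b,0,0)}V$; for the third identity, the two multisets differ only in whether a single parameter may attain an extreme value. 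The second identity involves the partitions $(a+1,b,0)$ and $(b+1,b,0)$, which are not covered by the dictionary, so I would first expand them via Pieri (for instance $V\otimes\Sc_{(a,b,0)}V\cong\Sc_{(a+1,b,0)}V\oplus\Sc_{(a,b+1,0)}V\oplus\Sc_{(a,b,1)}V$) and via the Jacobi--Trudi expression $\Sc_{(b+1,b,0)}V\cong\Sym^{b+1}V\otimes\Sym^b V\ominus\Sym^{b+2}V\otimes\Sym^{b-1}V$ in the representation ring, after which the same symmetric-power bookkeeping settles it; substituting these expansions into the difference of the two sides also reveals the second identity to be equivalent to $\Sc_{(b+1,b+1,0)}V\otimes\Sc_{(a,b,1)}V\cong\Sc_{(a,b+1,b+1)}V\otimes\Sc_{(b,1,0)}V\oplus\Sc_{(a+1,b+1,0)}V\otimes\Sc_{(b,b,1)}V$.

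The main obstacle is not a conceptual one: there is no single slick identity, so the three cases require three parallel but separate computations, and the genuine care lies in keeping the parameter ranges straight --- in particular the $\GL_3$ truncations (partitions with at most three rows, equivalently the co-Pieri expansions that produce a weight with negative last entry before the $\bigwedge^3 V$-twist) and the way the hypothesis $a>b>0$ cuts those ranges down; the second identity additionally carries its preliminary Pieri expansion. Once the three identities are in hand, all five nonzero positions of the two equivariant Betti tables agree, which is the displayed isomorphism, and the three identities are literally its homological-degree-$0$, $1$, and $3$ components, as stated.
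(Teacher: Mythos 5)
Your reduction of the table isomorphism to three character identities is exactly right and matches the paper's implicit structure: the two homological-degree-$2$ slots (internal degrees $2$ and $b+2$) agree trivially, and the remaining slots in homological degrees $0,1,3$ are literally the three displayed identities; your identification of the right-hand tables as the pure resolutions of Theorem~\ref{equivariantres} for the degree sequences $(0,1,2,b+3)$ and $(0,1,b+2,b+3)$ is also correct. Where you genuinely diverge from the paper is in how the three identities are proved. The paper does it uniformly via Lemma~\ref{determinantlemma}: each product $s_\lambda s_\mu$ is written as a single $3\times 3$ determinant in complete homogeneous symmetric functions, and the three identities become determinant identities obtained by comparing matrices that share rows/columns (multilinearity/cofactor expansion), so all three cases are handled by one mechanism with no case analysis. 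Your route instead uses the $\GL_3$-specific dictionary $\Sc_{(m,m,0)}V\cong \Sym^m(V^*)\otimes(\bigwedge^3 V)^{\otimes m}$, $\Sc_{(a,b+1,b+1)}V\cong\Sym^{a-b-1}V\otimes(\bigwedge^3 V)^{\otimes(b+1)}$, plus Pieri and dual-Pieri decompositions, and then matches the resulting multiplicity-free multisets of dominant weights by an index shift with a boundary locus; I checked that this bookkeeping does close up for the first and third identities (e.g.\ for the third, both sides are $\bigoplus_{k=0}^{\min(b+1,a-b)}\Sc_{(a+b+2-k,\,2b+2,\,b+1+k)}V$), and your reduction of the second identity to $\Sc_{(b+1,b+1,0)}V\otimes\Sc_{(a,b,1)}V\cong\Sc_{(a,b+1,b+1)}V\otimes\Sc_{(b,1,0)}V\oplus\Sc_{(a+1,b+1,0)}V\otimes\Sc_{(b,b,1)}V$ is correct (it uses the first identity together with $V\otimes\Sc_{(a,b,0)}V$ and $V\otimes\Sc_{(b,b,0)}V$, $V\otimes\Sc_{(b,0,0)}V$ Pieri expansions). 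Be aware, though, that this residual identity still involves the non-rectangular shape $(b,1,0)$ (and $(b,b,1)$), so your dictionary alone does not finish it; you need one more expansion such as $s_{(b,1,0)}=s_bs_1-s_{b+1}$ before the symmetric-power bookkeeping applies, which is the least developed part of your sketch. In short: your approach is correct and more explicitly bijective, buying a concrete description of how the irreducibles match up, at the cost of three separate computations with delicate range bookkeeping; the paper's determinantal argument is shorter, uniform across the three identities, and generalizes more easily (the same lemma also drives the companion proposition with $\beta^2=(a,b,1)$).
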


Let $h_k = s_{(k)}$ be the $k$th complete homogeneous symmetric
function of degree $k$ in $n$ variables. We remark that all three
identities can be proven from the following lemma by expressing both
sides as $3 \times 3$ determinants.

\begin{lemma} \label{determinantlemma} The following identity holds
  \[
  s_\lambda s_\mu = \det(h_{\lambda_i + \mu_{n+1-j} - i +
    j})_{i,j=1}^n
  \]
  if we interpret $h_0 = 1$ and $h_k = 0$ for $k<0$.
\end{lemma}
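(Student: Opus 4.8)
The plan is to reduce the identity to the Weyl character formula \eqref{weylcharacterformula}: I will exhibit the matrix on the right-hand side as a product of two explicit $n\times n$ matrices and apply multiplicativity of the determinant. The point is that each complete homogeneous symmetric function $h_k$ has a partial fraction expansion $h_k=\sum_{l=1}^n \frac{x_l^{\,k+n-1}}{\prod_{m\neq l}(x_l-x_m)}$, in which the $l$-th variable contributes a separable (rank-one) summand, so a determinant whose entries are $h$'s of sums of two "beta numbers" factors.

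\textbf{Steps.} First I would set $\beta_i=\lambda_i+n-i$ and $\gamma_j=\mu_j+n-j$, so that $\beta=\lambda+\rho$ and $\gamma=\mu+\rho$ for $\rho=(n-1,\dots,1,0)$, and $a_{\lambda+\rho}=\det(x_j^{\beta_i})$, $a_{\mu+\rho}=\det(x_j^{\gamma_i})$ are exactly the alternants from \eqref{weylcharacterformula}. Reversing the order of the $n$ columns of $(h_{\lambda_i+\mu_{n+1-j}-i+j})_{i,j}$ multiplies the determinant by $(-1)^{\binom n2}$ and turns the $(i,j)$ entry into $h_{\beta_i+\gamma_j-n+1}$. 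Next I would record the partial fraction identity obtained by decomposing $\prod_{l=1}^n(1-x_lt)^{-1}$ into simple fractions and reading off the coefficient of $t^k$:
\[
h_k=\sum_{l=1}^n \frac{x_l^{\,k+n-1}}{\prod_{m\neq l}(x_l-x_m)},
\]
which is valid for every $k\geq-(n-1)$, hence in particular for $k=\beta_i+\gamma_j-n+1$ since $\beta_i,\gamma_j\geq 0$; moreover it returns $0$ precisely when $\beta_i+\gamma_j\leq n-2$, matching the convention $h_k=0$ for $k<0$. Substituting this into each entry exhibits the column-reversed matrix as $PQ$ with $P_{il}=x_l^{\beta_i}$ and $Q_{lj}=x_l^{\gamma_j}/\prod_{m\neq l}(x_l-x_m)$. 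Then $\det P=\det(x_j^{\beta_i})=a_{\lambda+\rho}$, while pulling the common factor out of the $l$-th row of $Q$ gives $\det Q=\big(\prod_{l\neq m}(x_l-x_m)\big)^{-1}a_{\mu+\rho}=(-1)^{\binom n2}a_\rho^{-2}a_{\mu+\rho}$, using $\prod_{l\neq m}(x_l-x_m)=(-1)^{\binom n2}a_\rho^2$ where $a_\rho=\det(x_j^{n-i})$ is the Vandermonde. Multiplying the two determinants and undoing the column reversal, the two factors $(-1)^{\binom n2}$ cancel and one is left with $a_{\lambda+\rho}a_{\mu+\rho}/a_\rho^{2}=s_\lambda s_\mu$ by \eqref{weylcharacterformula}.

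\textbf{Main point of care.} This is more bookkeeping than obstacle: there are three appearances of $(-1)^{\binom n2}$ --- from the column reversal, from $\prod_{l\neq m}(x_l-x_m)$, and implicitly in the relation between $a_\rho$ and $\prod_{i<j}(x_i-x_j)$ --- and I would check they combine to $+1$. One should also confirm that all exponents $\beta_i,\gamma_j$ occurring in $P$ and $Q$ are nonnegative (true because $\lambda$ and $\mu$ have at most $n$ parts), so that $P$ and $Q$ have genuine polynomial entries and the computation is an identity in $\Z[x_1,\dots,x_n]$. No deeper input is needed --- in particular neither the Jacobi--Trudi formula nor the Littlewood--Richardson rule --- the proof is a direct bialternant manipulation.
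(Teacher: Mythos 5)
Your proof is correct. The bookkeeping all checks out: reversing the columns turns the $(i,j)$ entry into $h_{\beta_i+\gamma_j-n+1}$; the partial-fraction expansion of $h_k$ is valid exactly in the range $k\ge -(n-1)$ that the entries occupy (its vanishing for $-(n-1)\le k\le -1$ matches the convention $h_k=0$ for $k<0$); the factorization into $P_{il}=x_l^{\beta_i}$ and $Q_{lj}=x_l^{\gamma_j}/\prod_{m\ne l}(x_l-x_m)$ gives $\det P=a_{\lambda+\rho}$ and $\det Q=(-1)^{\binom{n}{2}}a_\rho^{-2}a_{\mu+\rho}$, and the two signs $(-1)^{\binom{n}{2}}$ (from the column reversal and from $\prod_{l\ne m}(x_l-x_m)=(-1)^{\binom{n}{2}}a_\rho^2$) cancel, leaving $a_{\lambda+\rho}a_{\mu+\rho}/a_\rho^2=s_\lambda s_\mu$ by \eqref{weylcharacterformula}. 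The paper gives no argument at all for this lemma --- it simply cites Macdonald, I.3, Example 8(c) --- so your route is genuinely different in that it is self-contained: it is the classical ``product of two alternants'' proof, it uses only the bialternant formula already quoted in the paper, and it specializes to Jacobi--Trudi when $\mu=0$; what the citation buys the paper is brevity, what your argument buys is independence from the reference and transparency of where the convention $h_k=0$ for $k<0$ enters. One small imprecision: $Q$ does not have polynomial entries (they are rational functions), so the matrix identity $M=PQ$ lives over the fraction field $\Q(x_1,\dots,x_n)$ rather than in $\Z[x_1,\dots,x_n]$; the nonnegativity of $\beta_i,\gamma_j$ is still exactly what you need, but only to guarantee that the exponents are genuine monomial exponents and that every entry index satisfies $k\ge-(n-1)$. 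This does not affect the validity of the argument.
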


\begin{proof} See \cite[I.3, Example 8(c)]{macdonald}. \end{proof}

\begin{eg}
  Consider the Pieri resolution with $n=3$, $\alpha = (2,1,0)$,
  $\beta^1 = (3,1,0)$, and $\beta^2 = (2,2,0)$, which is
  \[
  \tiny 0 \to \tableau[scY]{,,|,,||} \to \tableau[scY]{,|,||} \oplus
  \tableau[scY]{,,|,,|} \to \tableau[scY]{,|||} \oplus
  \tableau[scY]{,|,|} \to \tableau[scY]{,||}.
  \]
  The algorithm writes the graded Betti diagram as
  \[
  \left( \begin{array}{cccc} 8 & 21 & 15 & - \\ - & - & 1 &
      3 \end{array} \right) = \frac{5}{2} \left( \begin{array}{cccc} 3
      & 8 & 6 & - \\ - & - & - & 1 \end{array} \right) + \frac{1}{2}
  \left( \begin{array}{cccc} 1 & 2 & - & - \\ - & - & 2 &
      1 \end{array} \right).
  \]
  Multiplying both sides by 6, we can write this equation as 
  \[
  6 \left( \begin{array}{cccc} 8 & 21 & 15 & - \\ - & - & 1 &
      3 \end{array} \right) = 15 \left( \begin{array}{cccc} 3 & 8 & 6
      & - \\ - & - & - & 1 \end{array} \right) +
  \left( \begin{array}{cccc} 3 & 6 & - & - \\ - & - & 6 &
      3 \end{array} \right),
  \]
  which gives a decomposition of equivariant Betti diagrams:
  \[
  {\tiny \tableau[scY]{,|,|}} \otimes
  \begin{array}{|c|c|c|c|}
    \hline 
    {\tiny \tableau[scY]{\bl | ,||\bl}}
    & {\tiny \tableau[scY]{\bl | ,|||\bl\bl}}
    \oplus {\tiny \tableau[scY]{\bl | ,|,|\bl\bl}} & {\tiny
      \tableau[scY]{\bl | ,|,||\bl}}
    & \\
    \hline & & {\tiny \tableau[scY]{\bl |,,|,,|\bl}} & {\tiny
      \tableau[scY]{\bl |,,|,,||\bl}} \\
    \hline
  \end{array} = 
  {\tiny \tableau[scY]{\bl |,|,||\bl}} \otimes \begin{array}{|c|c|c|c|}
    \hline {\tiny \tableau[scY]{\bl | , |\bl}} & {\tiny \tableau[scY]{\bl
        |,||\bl}}
    & {\tiny \tableau[scY]{\bl |,|,|\bl}} & \\
    \hline
    & & & {\tiny \tableau[scY]{\bl |,,|,,|\bl}} \\
    \hline
  \end{array}
  \oplus 
  {\tiny \tableau[scY]{\bl |,,|,,|\bl}} \otimes
  \begin{array}{|c|c|c|c|}
    \hline 
    {\tiny \tableau[scY]{\bl ||\bl}} & {\tiny \tableau[scY]{\bl |||\bl}} & &
    \\ 
    \hline 
    & & {\tiny \tableau[scY]{\bl |,|,|\bl}} & 
    {\tiny \tableau[scY]{\bl |,,|,|\bl}} \\ 
    \hline
  \end{array}\ .
  \]
\end{eg}

We can instead set $\beta^2 = (a,b,1)$ and get

\begin{proposition} Set $c=a-b+1$. We have the following equivariant
  isomorphism of graded Betti tables:
  \begin{align*}
    & (c,c,0) \otimes
    \begin{array}{|c|c|c|c|}
      \hline (a,b,0) & (a+1,b,0) \oplus (a,b,1) & (a+1,b,1) & \\ 
      \hline \vdots & \vdots & \vdots & \vdots \\
      \hline & & (a+1,a+1,0) & (a+1,a+1,1) \\
      \hline 
    \end{array} \cong\\
    &(a+1,b,1) \otimes 
    \begin{array}{|c|c|c|c|}
      \hline (a-b,a-b,0) & (c,a-b,0) & (c,c,0) & \\ 
      \hline \vdots & \vdots & \vdots & \vdots \\
      \hline & & & (c,c,c) \\
      \hline 
  \end{array}\ \oplus\\
  &(a+1,a+1,0) \otimes 
  \begin{array}{|c|c|c|c|}
    \hline (a-b,0,0) & (c,0,0) & & \\
    \hline \vdots & \vdots & \vdots & \vdots \\
    \hline & & (c,c,0) & (c,c,1) \\
    \hline 
  \end{array}\ ,
  \end{align*}
  where the $\vdots$ spans $a-b-1$ rows of zeroes.

  We deduce the following three identities:
  \begin{align*}
    (a-b+1,a-b+1,0) \otimes (a,b,0) &\cong (a+1,b,1) \otimes
    (a-b,a-b,0)\\ 
    &\quad \oplus (a+1,a+1,0) \otimes (a-b,0,0)\\
    (a-b+1,a-b+1,0) \otimes ((a+1,b,0) \oplus (a,b,1)) &\cong
    (a+1,b,1) \otimes (a-b+1,a-b,0)\\
    &\quad \oplus (a+1,a+1,0) \otimes (a-b+1,0,0)\\
    (a-b+1,a-b+1,0) \otimes (a+1,a+1,1) &\cong (a+1,b,1) \otimes
    (a-b+1,a-b+1,a-b+1)\\ 
    &\quad \oplus (a+1,a+1,0) \otimes (a-b+1,a-b+1,1).
  \end{align*}
\end{proposition}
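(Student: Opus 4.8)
The plan is to follow the template of the proof of Proposition~\ref{schurfamily1}. First I would make all three resolutions in the asserted isomorphism explicit. For the left-hand side, put $\alpha=(a,b,0)$, $\beta^1=(a+1,b,0)$, $\beta^2=(a,b,1)$; then $\beta^1/\alpha$ and $\beta^2/\alpha$ are single boxes in columns $1$ and $3$, so Corollary~\ref{singlecolumns} applies, with critical boxes $C(\alpha,\beta^1)=\{(a+1,2),(b+1,3)\}$ and $C(\alpha,\beta^2)=\emptyset$. Running the admissible-subset enumeration of Section~\ref{criticalboxessection} --- in homological degree $3$ the minimality condition discards $\beta^1(\{2,3\})=(a+1,a+1,b+1)$ because it contains $\beta^1(\{2\})\cup\beta^2=(a+1,a+1,1)$ --- gives, with $c=a-b+1$ and writing $\lambda$ for $\Sc_\lambda V$, the resolution
\[
0\to(a+1,a+1,1)\to(a+1,b,1)\oplus(a+1,a+1,0)\to(a+1,b,0)\oplus(a,b,1)\to(a,b,0),
\]
where the shifts in homological degrees $0,1,2,3$ are $0$; $1,1$; $2$ and $c+1$; $c+2$. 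For the right-hand side, the two complexes are the pure resolutions of Theorem~\ref{equivariantres}: for $\alpha'=(a-b,a-b,0)$, $\beta'=(c,a-b,0)$ the sequence $e$ of Section~\ref{purefreesection} is $(1,1,c)$, giving terms $(a-b,a-b,0),(c,a-b,0),(c,c,0),(c,c,c)$ with shifts $(0,1,2,c+2)$; for $\alpha''=(a-b,0,0)$, $\beta''=(c,0,0)$ it is $(1,c,1)$, giving terms $(a-b,0,0),(c,0,0),(c,c,0),(c,c,1)$ with shifts $(0,1,c+1,c+2)$. Checking the row indices $j-i$ shows all three Betti tables are supported in rows $0$ and $a-b$, as in the display in the statement.

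Second, I would compare the two sides cell by cell after tensoring by the indicated Schur functors. In homological degree $2$ the entries agree on the nose by commutativity of the tensor product of $\GL(V)$-representations (for instance $\Sc_{(c,c,0)}V\otimes\Sc_{(a+1,b,1)}V=\Sc_{(a+1,b,1)}V\otimes\Sc_{(c,c,0)}V$), so the only substantive comparisons occur in homological degrees $0$, $1$ and $3$, where one obtains precisely the three displayed Schur identities. To prove those I would use Lemma~\ref{determinantlemma} with $n=3$ (and $h_0=1$, $h_k=0$ for $k<0$): each product $s_\lambda s_\mu$ becomes a $3\times3$ determinant in the $h_k$, so each identity becomes an equality $D=D'+D''$ of $3\times3$ determinants, which one checks by expressing a single column as a sum of two columns and doing elementary column operations.

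I expect the main obstacle to be bookkeeping: running the admissible-subset enumeration without slips and, for each of the three identities, pinning down the column to split. This last point can be avoided entirely. Once the three resolutions are known, the isomorphism also follows from Proposition~\ref{simplicialcase}: the module $M=\coker(\Sc_{\beta^1}V\oplus\Sc_{\beta^2}V\to\Sc_\alpha V)$ has finite length --- by Lemma~\ref{acycliclemma} it equals $\bigoplus_{b\le t\le a}\Sc_{(a,t,0)}V$ --- and its equivariant Betti table is impure only in homological degree $2$, interpolating between the degree sequences $(0,1,2,c+2)$ and $(0,1,c+1,c+2)$. By (the proof of) Proposition~\ref{simplicialcase}, $\B(M)$ is then forced to equal $\dfrac{s_{(a+1,b,1)}}{s_{(c,c,0)}}\,\B'+\dfrac{s_{(a+1,a+1,0)}}{s_{(c,c,0)}}\,\B''$, where $\B'$ and $\B''$ are the two pure equivariant Betti tables above (chosen as in Theorem~\ref{equivariantres}) and the coefficients are the Schur-positive ratios $\B(M)_{2,j}/\B(j)_{2,j}$; clearing the denominator $s_{(c,c,0)}$ yields the asserted isomorphism of equivariant graded Betti tables, and reading off its entries recovers the three stated identities.
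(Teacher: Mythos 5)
Your proposal is correct and follows essentially the same route the paper intends: the paper leaves this proposition without an explicit proof, but its surrounding text justifies it exactly as you do, via the Pieri resolution for $\alpha=(a,b,0)$, $\beta^1=(a+1,b,0)$, $\beta^2=(a,b,1)$ together with Proposition~\ref{simplicialcase} (which underlies the equivariant Boij--S\"oderberg algorithm used in these examples) and the remark that the three Schur identities follow from Lemma~\ref{determinantlemma} by comparing $3\times3$ determinants. Your computation of the three resolutions, shifts, and coefficients $s_{(a+1,b,1)}/s_{(c,c,0)}$ and $s_{(a+1,a+1,0)}/s_{(c,c,0)}$ is accurate, so both of your routes close the argument as in the paper.
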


\begin{eg}
  Consider the Pieri resolution with $n=3$, $\alpha = (2,1,0)$,
  $\beta^1 = (3,1,0)$, and $\beta^2 = (2,1,1)$, which is
  \[
  \tiny 0 \to \tableau[scY]{,,|,|,} \to \tableau[scY]{,,|||} \oplus
  \tableau[scY]{,|,|,} \to \tableau[scY]{,|||} \oplus
  \tableau[scY]{,,||} \to \tableau[scY]{,||} .
  \]
  The algorithm writes the graded Betti diagram as
  \[
  \left( \begin{array}{cccc} 8 & 18 & 6 & - \\ - & - & 10 &
      6 \end{array} \right) = \left( \begin{array}{cccc} 3 & 8 & 6 & -
      \\ - & - & - & 1 \end{array} \right) + 5
  \left( \begin{array}{cccc} 1 & 2 & - & - \\ - & - & 2 &
      1 \end{array} \right).
  \]
  Multiplying both sides by 6, we can write this equation as 
  \[
  6 \left( \begin{array}{cccc} 8 & 18 & 6 & - \\ - & - & 10 &
      6 \end{array} \right) = 6 \left( \begin{array}{cccc} 3 & 8 & 6 &
      - \\ - & - & - & 1 \end{array} \right) + 10
  \left( \begin{array}{cccc} 3 & 6 & - & - \\ - & - & 6 &
      3 \end{array} \right),
  \] 
  which gives a decomposition of equivariant Betti diagrams:
  \[
  {\tiny \tableau[scY]{\bl |,|,|\bl}} \otimes \begin{array}{|c|c|c|c|}
    \hline {\tiny \tableau[scY]{\bl |,||\bl}} & {\tiny
      \tableau[scY]{\bl |,|||\bl}} \oplus {\tiny \tableau[scY]{\bl
        |,,||\bl}} & {\tiny \tableau[scY]{\bl |,,|||\bl}}
    & \\
    \hline & & {\tiny \tableau[scY]{\bl |,|,|,|\bl}} &
    {\tiny \tableau[scY]{\bl |,,|,|,|\bl}} \\
    \hline
  \end{array} = 
  {\tiny \tableau[scY]{\bl |,,|||\bl}} \otimes \begin{array}{|c|c|c|c|}
    \hline 
    {\tiny \tableau[scY]{\bl |,|\bl}} &  {\tiny \tableau[scY]{\bl |,||\bl}}
    &  {\tiny \tableau[scY]{\bl |,|,|\bl}} & \\
    \hline 
    & & &  {\tiny \tableau[scY]{\bl |,,|,,|\bl}} \\
    \hline
  \end{array}
  \oplus {\tiny \tableau[scY]{\bl |,|,|,|\bl}} \otimes \begin{array}{|c|c|c|c|}
    \hline
    {\tiny \tableau[scY]{\bl ||\bl}} &
    {\tiny \tableau[scY]{\bl |||\bl}} & & \\
    \hline 
    & & {\tiny \tableau[scY]{\bl |,|,|\bl}} &
    {\tiny \tableau[scY]{\bl |,,|,|\bl}} \\
    \hline
  \end{array}\ .
  \]
\end{eg}

\begin{eg} A more complicated Pieri resolution: let $n=3$, $\alpha =
  (3,1,0)$, $\beta^1 = (4,1,0)$, and $\beta^2 = (3,3,0)$. Then the
  Pieri resolution looks like
  \[
  \tiny 0 \to \tableau[scY]{,,|,,|,||} \to \tableau[scY]{,|,|,||}
  \oplus \tableau[scY]{,,|,,|,} \to \tableau[scY]{ ,||||}
  \oplus \tableau[scY]{,|,|,} \to \tableau[scY]{,|||}.
  \]
  The algorithm writes the graded Betti diagram as
  \[
  \left( \begin{array}{cccc} 15 & 24 & - & - \\ - & 10 & 24 & - \\ - &
      - & 3 & 8 \end{array} \right) = \frac{8}{5}
  \left( \begin{array}{cccc} 8 & 15 & - & - \\ - & - & 10 & - \\ - & -
      & - & 3 \end{array} \right) + \frac{8}{5}
  \left( \begin{array}{cccc} 1 & - & - & - \\ - & 5 & 5 & - \\ - & - &
      - & 1 \end{array} \right) + 3 \left( \begin{array}{cccc} 3 & - &
      - & - \\ - & 10 & - & - \\ - & - & 15 & 8 \end{array} \right).
  \]
  The first step of the algorithm subtracts ${\tiny
    \tableau[scY]{,||||}}$ times the first pure diagram on the right
  hand side from ${\tiny \tableau[scY]{,|||}}$ times the Betti diagram
  on the left hand side, which yields the following table:
  \[
  \begin{array}{|c|c|c|c|} \hline
    \begin{array}{cc} (4,4,0) & (4,3,1) \\ (3,3,2) \end{array} & & & \\ 
    \hline 
    & \begin{array}{ccc} (6,4,0) & (6,3,1) & (5,4,1) \\ (5,3,2) &
      (4,4,2) & (4,3,3) \end{array} 
    & \begin{array}{cc} (6,5,0) & (6,4,1) \\ (5,5,1) &
      (5,4,2) \end{array} & \\ \hline 
    &  & \begin{array}{cc} (6,4,2) & (6,3,3) \\ (5,4,3) \end{array} 
    & \begin{array}{cc} (6,5,2) & (6,4,3) \\ (5,5,3) &
      (5,4,4) \end{array} \\ \hline
  \end{array}\ ,
  \]
  where a collection of partitions in the same entry denotes their
  direct sum. The associated isomorphism of representations (after
  simplifications) is
  \[
   T_0 = T_1 \oplus T_2 \oplus T_3,
  \]
  where
  \[
  T_0 = {\tiny \tableau[scY]{\bl |,|,||\bl}} \otimes {\tiny
    \tableau[scY]{\bl |,|||\bl}} \otimes
  \begin{array}{|c|c|c|c|}
    \hline 
    {\tiny \tableau[scY]{\bl |,|||\bl}} & 
    {\tiny \tableau[scY]{\bl |,||||\bl}} & & \\
    \hline &  {\tiny \tableau[scY]{\bl |,|,|,|\bl}} & 
    {\tiny \tableau[scY]{\bl |,|,|,||\bl}} & \\
    \hline & &  {\tiny \tableau[scY]{\bl |,,|,,|,|\bl}} & 
    {\tiny \tableau[scY]{\bl |,,|,,|,||\bl}} \\
    \hline
  \end{array}\ , \quad
  T_1 =  {\tiny \tableau[scY]{\bl |,|,||\bl}} \otimes {\tiny
    \tableau[scY]{\bl |,||||\bl}} \otimes  
  \begin{array}{|c|c|c|c|}
    \hline 
    {\tiny \tableau[scY]{\bl |,||\bl}} & 
    {\tiny \tableau[scY]{\bl |,|||\bl}} & & \\
    \hline 
    & &  {\tiny \tableau[scY]{\bl |,|,|,|\bl}} & \\
    \hline 
    & &  &  {\tiny \tableau[scY]{\bl |,,|,,|,|\bl}} \\
    \hline
  \end{array}\ ,
  \]
  \[
  T_2 = \left( {\tiny \tableau[scY]{\bl |,|,|,|,|,||\bl}} \oplus {\tiny
      \tableau[scY]{\bl |,,|,|,|,|||\bl}} \oplus {\tiny
      \tableau[scY]{\bl |,,|,|,|,|,|\bl}} \oplus {\tiny
      \tableau[scY]{\bl |,,|,,|,|,||\bl}} \right) \otimes
  \begin{array}{|c|c|c|c|}
    \hline 
    {\tiny \tableau[scY]{\bl |,|\bl}} & & & \\
    \hline 
    &  {\tiny \tableau[scY]{\bl |,|||\bl}} & 
    {\tiny \tableau[scY]{\bl |,|,||\bl}} & \\
    \hline 
    & & & {\tiny \tableau[scY]{\bl |,,|,,||\bl}} \\
    \hline
  \end{array}\ ,
  \]
  \[
  T_3 = {\tiny \tableau[scY]{\bl |,|||\bl}} \otimes {\tiny
    \tableau[scY]{\bl |,,|,,|,|\bl}} \otimes
  \begin{array}{|c|c|c|c|}
    \hline 
    {\tiny \tableau[scY]{\bl ||\bl}} & & & \\
    \hline 
    &  {\tiny \tableau[scY]{\bl ||||\bl}} & & \\
    \hline 
    & &  {\tiny \tableau[scY]{\bl |,|,||\bl}} & 
    {\tiny \tableau[scY]{\bl |,,|,||\bl}} \\
    \hline
  \end{array}\ ,
  \]
  and for $T_3$ we have used the factorization
  \[
  \tiny \tableau[scY]{,,|,,|,|,|||} \oplus \tableau[scY]{,,|,,|,,||||}
  \oplus \tableau[scY]{,,|,,|,,|,||} = \tableau[scY]{,|||} \otimes
  \tableau[scY]{,,|,,|,}\ .
  \]
\end{eg}

\begin{eg} Here is an elaboration of
  Example~\ref{multiplicityexample}. While the module resolved is not
  Cohen--Macaulay, we can throw in an extra relation $\beta^0 =
  (5,1,0)$ to make the cokernel $M$ have finite length. Then the
  equivariant Betti diagram of $M$ is
  \[ 
  T = \begin{array}{|c|c|c|c|} \hline
    (3,1,0) & & & \\
    \hline & (5,1,0) & & \\ \hline & \begin{array}{cc} (4,3,0) &
      (4,2,1) \\ (3,3,1) \end{array} & \begin{array}{ccc} (5,3,0) &
      (5,2,1) & (4,4,0) \\ 2 \cdot (4,3,1) & (4,2,2) &
      (3,3,2) \end{array} & \begin{array}{cc} (5,3,1) & (5,2,2) \\
      (4,4,1) & (4,3,2) \end{array} \\ \hline
  \end{array},
  \]
  and its decomposition is
  \[
  {\tiny \tableau[scY]{\bl ||||\bl}} \otimes T = {\tiny
    \tableau[scY]{\bl |,|||||\bl}} \otimes \begin{array}{|c|c|c|c|}
    \hline 
    {\tiny \tableau[scY]{\bl ||\bl}} & & & \\
    \hline 
    &  {\tiny \tableau[scY]{\bl ||||\bl}} & & \\
    \hline 
    & & {\tiny \tableau[scY]{\bl |,|,||\bl}} &
    {\tiny \tableau[scY]{\bl |,,|,||\bl}} \\
    \hline
  \end{array}
  \oplus \left(\ {\tiny \tableau[scY]{\bl |,|,|,||\bl}} \oplus {\tiny
      \tableau[scY]{\bl |,,|,|||\bl}} \oplus {\tiny \tableau[scY]{\bl
        |,,|,|,|\bl}}\ \right) \otimes \begin{array}{|c|c|c|c|}
    \hline 
    \emptyset & & & \\
    \hline 
    & & & \\
    \hline 
    & {\tiny \tableau[scY]{\bl ||||\bl}} & {\tiny \tableau[scY]{\bl
        |,|||\bl}} &
    {\tiny \tableau[scY]{\bl |,,|||\bl}} \\
    \hline
  \end{array}.
  \]
\end{eg}

\begin{eg} Now we give a decomposition for a non-Cohen--Macaulay
  equivariant module. Let $n=3$, $\alpha = (1,0,0)$ and $\beta =
  (2,1,0)$ so that the Pieri resolution is
  \[
  \tiny 0 \to   \tableau[scY]{,,|,} \to   \tableau[scY]{,|,} \oplus
  \tableau[scY]{,,||} \to \tableau[scY]{,||} \to \tableau[scY]{|}.
  \]
  The decomposition (after some simplifications) is
  \[
  {\tiny \tableau[scY]{\bl |,,||\bl}} \otimes \begin{array}{|c|c|c|c|}
    \hline
    {\tiny \tableau[scY]{\bl ||\bl}} & & & \\
    \hline & {\tiny \tableau[scY]{\bl |,||\bl}} & {\tiny
      \tableau[scY]{\bl |,|,|\bl}} \oplus {\tiny \tableau[scY]{\bl
        |,,||\bl}} & {\tiny
      \tableau[scY]{\bl |,,|,|\bl}} \\
    \hline
  \end{array} = 
  {\tiny \tableau[scY]{\bl |,,|,|\bl}} \otimes \begin{array}{|c|c|c|c|}
    \hline 
    \emptyset & & & \\
    \hline 
    &  {\tiny \tableau[scY]{\bl |||\bl}} &  {\tiny \tableau[scY]{\bl |,||\bl}}
    &  {\tiny \tableau[scY]{\bl |,,||\bl}} \\
    \hline
  \end{array}
  \oplus {\tiny \tableau[scY]{\bl |,,|\bl}} \otimes \begin{array}{|c|c|c|c|}
    \hline 
    {\tiny \tableau[scY]{\bl |||\bl}} & & & \\
    \hline
    & {\tiny \tableau[scY]{\bl |,|,|\bl}} &
    {\tiny \tableau[scY]{\bl |,,|,|\bl}} & \\
    \hline
  \end{array}\ ,
  \]
  where $\emptyset$ denotes the empty partition of 0, i.e.,
  $\Sc_\emptyset V = K$ is the trivial representation of $\GL(V)$.
\end{eg}

\begin{eg} \label{countersimplicial} This example shows that the
  equivariant analogue of the Boij--S\"oderberg algorithm does not
  hold. Let $\dim V = 3$ and $A = \Sym(V)$, let $M$ be the cokernel of
  the Pieri map $A \otimes \Sc_3 V \to A \otimes \Sc_2 V$, and let $N$
  be the cokernel of the Pieri map $A \otimes \Sc_{3,1} V \to A
  \otimes \Sc_{1,1} V$. Then the equivariant Betti table of the
  $A$-module $M \oplus M \oplus N$ is
  \[
  T = 
  \begin{array}{|c|c|c|c|}
    \hline
    {\tiny \tableau[scY]{\bl |||\bl}} \oplus {\tiny \tableau[scY]{\bl
        |||\bl}} \oplus {\tiny \tableau[scY]{\bl |,|\bl}} & {\tiny
      \tableau[scY]{\bl ||||\bl}} \oplus {\tiny \tableau[scY]{\bl
        ||||\bl}} & & \\  
    \hline 
    &  {\tiny \tableau[scY]{\bl |,|||\bl}} & {\tiny \tableau[scY]{\bl
        |,|,||\bl}} & \\ 
    \hline 
    & &  {\tiny \tableau[scY]{\bl |,|,|,|\bl}} \oplus {\tiny
      \tableau[scY]{\bl |,|,|,|\bl}} & {\tiny \tableau[scY]{\bl
        |,,|,|,|\bl}} \oplus {\tiny \tableau[scY]{\bl |,,|,|,|\bl}}
    \oplus {\tiny \tableau[scY]{\bl |,,|,,||\bl}} \\ 
    \hline
  \end{array}\ .
  \]
  The equivariant analogue of the Boij--S\"oderberg algorithm fails on
  this example. The top degree sequence of this diagram is
  $(0,1,3,5)$, whose corresponding pure Betti diagram is
  \[
  T' =
  \begin{array}{|c|c|c|c|}
    \hline
    {\tiny \tableau[scY]{\bl |,||\bl}} & {\tiny \tableau[scY]{\bl
        |,|||\bl}} & & \\  
    \hline
    & & {\tiny \tableau[scY]{\bl |,|,|,|\bl}} & \\
    \hline
    & & & {\tiny \tableau[scY]{\bl |,,|,,|,|\bl}} \\
    \hline
  \end{array}\ .
  \]
  In the non-equivariant case, the algorithm says to subtract the
  largest rational multiple of $T'$ from $T$ which makes the resulting
  table have nonnegative entries. In this case, the rational multiple
  is $4/3$, and corresponds to getting rid of the entry in the first
  row and second column. The equivariant version would say to replace
  $4/3$ by $2s_3 / s_{3,1}$. Alternatively, we can subtract $2s_3T'$
  from $s_{3,1}T$. The second row and third column of the resulting
  table contains $-s_{6,3} + s_{6,2,1} + s_{5,4} + s_{5,2,2} +
  s_{4,4,1} - s_{3,3,3}$, which is not in $\SQ_{\ge 0}$ because its
  most dominant weight $(6,3,0)$ has a negative coefficient. A similar
  phenomena occurs when we replace top degree sequence with bottom
  degree sequence in the Boij--S\"oderberg algorithm.
\end{eg}

\small \noindent Steven V Sam, 
Department of Mathematics, 
Massachusetts Institute of Technology, 
Cambridge, MA 02139, 
{\tt ssam@math.mit.edu}, 
\url{http://math.mit.edu/~ssam/}

\bigskip

\small \filbreak \noindent Jerzy Weyman, 
Department of Mathematics, 
Northeastern University, 
Boston, MA 02115, 
{\tt j.weyman@neu.edu}


\begin{thebibliography}{99}

\setlength{\itemsep}{-1mm}
\small

\bibitem[BS]{bsconj} Mats Boij, Jonas S\"oderberg, Graded Betti numbers of Cohen--Macaulay modules and the multiplicity conjecture, {\it J. Lond. Math. Soc. (2)} {\bf 78} (2008), no.~1, 85--106, \arxiv{math.AC/0611081v2}.

\bibitem[EFW]{efw} David Eisenbud, Gunnar Fl\o ystad, Jerzy Weyman, The existence of equivariant pure free resolutions, {\it Ann. Inst. Fourier (Grenoble)} {\bf 61} (2011), no.~3, 905--926, \arxiv{0709.1529v5}.

\bibitem[ES]{es} David Eisenbud and Frank-Olaf Schreyer, Betti numbers of graded modules and cohomology of vector bundles, {\it J. Amer. Math. Soc.} {\bf 22} (2009), 859--888, \arxiv{0712.1843v3}.

\bibitem[Han]{handelman} David Handelman, Positive polynomials and product type actions of compact groups, {\it Mem. Amer. Math. Soc.} {\bf 54} (1985), no.~320.

\bibitem[HK]{hk} J.~Herzog and M.~K\"uhl, On the Betti numbers of finite pure and linear resolutions, {\it Comm. Algebra} {\bf 12} (1984), no.~13-14, 1627--1646.

\bibitem[Jan]{jantzen} Jens Carsten Jantzen, {\it Representations of Algebraic Groups}, second edition, Mathematical Surveys and Monographs {\bf 107}, American Mathematical Society, Providence, RI, 2003.

\bibitem[Kin]{king} R.~C.~King, Modification rules and products of irreducible representations of the unitary, orthogonal, and symplectic groups, {\it J. Mathematical Phys.} {\bf 12} (1971), 1588--1598.

\bibitem[KT]{youngdiagram} Kazuhiko Koike and Itaru Terada, Young-diagrammatic methods for the representation theory of the classical groups of type $B_n$, $C_n$, $D_n$, {\it J. Algebra} {\bf 107} (1987), no.~2, 466--511.

\bibitem[Mac]{macdonald} I.~G.~Macdonald, {\it Symmetric Functions and Hall Polynomials}, second edition, Oxford Mathematical Monographs, Oxford, 1995.

\bibitem[Olv]{olver} Peter~J. Olver, Differential Hyperforms I, {\it University of Minnesota Mathematics Report} 82-101, available at \url{http://www.math.umn.edu/~olver/}.

\bibitem[Sam]{pierimaps} Steven~V Sam, Computing inclusions of Schur modules, {\it Journal of Software for Algebra and Geometry} {\bf 1} (2009) 5--10, \arxiv{0810.4666v2}.

\bibitem[Wey]{weyman} Jerzy Weyman, {\it Cohomology of Vector Bundles and Syzygies}, Cambridge University Press, Cambridge, 2003.

\end{thebibliography}
\end{document}